\newtheorem{theo}{Theorem}
\newtheorem{prop}{Proposition}[section]
\newtheorem{lemm}[prop]{Lemma}
\theoremstyle{definition}
\newtheorem{corr}[theo]{Corollary}
\newtheorem{rem}[prop]{Remark}
\numberwithin{equation}{section}
\newcommand{\R}{\mathbb{R}}
\newcommand{\N}{\mathbb{N}}
\def\C{\mathbb {C}}
\newcommand{\Z}{\mathbb{Z}}
\newcommand{\om}{\omega}
\newcommand{\dd}{\mathrm{d}}
\newcommand{\Lie}{\mathcal{L}}
\newcommand{\e}{\mathrm{e}}
\newcommand{\wt}{\widetilde}
\newcommand{\cal}{\mathcal}
\newcommand{\Pbf}{\mathbf{P}}
\newcommand{\Qbf}{\mathbf{Q}}
\newcommand{\Rbf}{\mathbf{R}}
\newcommand{\Abf}{\mathbf{A}}
\newcommand{\Tbf}{\mathbf{T}}
\newcommand{\End}{\mathrm{End}}
\newcommand{\ubf}{\mathbf{u}}
\newcommand{\clos}{\mathrm{clos}}
\let\Im=\Imag
\let\Re=\Real
\DeclareMathOperator{\supp}{supp}
\DeclareMathOperator{\WF}{WF}
\DeclareMathOperator{\tr}{tr}
\DeclareMathOperator{\id}{Id}
\def\bc{{\mathcal B}}
\title{Dynamical zeta functions for billiards}
\author[Y.~Chaubet]{Yann Chaubet}
\address{Université de Nantes, Laboratoire de mathématiques Jean Leray (UMR CNRS 6629)
Département de mathématiques,
2 rue de la Houssinière, 44322 Nantes Cédex 3, France}
\email{yann.chaubet@univ-nantes.fr}
\author[V.~Petkov]{Vesselin Petkov}
\address{Universit\'e de Bordeaux, Institut de Math\'ematiques de Bordeaux, 351, Cours de la Lib\'eration, 33405 Talence, France}
\email{petkov@math.u-bordeaux.fr}
\begin{document}

\maketitle


\begin{abstract} Let $D \subset \R^d,\: d \geq 2,$ be the union of a finite collection of pairwise disjoint strictly convex compact obstacles. Let $\mu_j \in \C,\: \Im \mu_j > 0$ be the resonances of the Laplacian in the  exterior of  $D$ with Neumann or Dirichlet boundary condition on $\partial D$. For $d$ odd, $u(t) = \sum_j  \e^{i |t| \mu_j}$ is a distribution in $ \mathcal{D}'(\R \setminus \{0\})$  and the Laplace transforms of the leading singularities of $u(t)$ yield the dynamical zeta functions $\eta_{\mathrm N},\: \eta_{\mathrm D}$ for Neumann and Dirichlet boundary conditions, respectively. These zeta functions play a crucial role in the analysis of the distribution of the resonances. Under a non-eclipse condition, for $d \geqslant 2$ we show that $\eta_{\mathrm N}$ and  $\eta_\mathrm D$ admit a meromorphic continuation to the whole complex plane. In the particular case when the boundary $\partial D$ is real analytic, by using a result of Fried \cite{fried1995meromorphic}, we prove that the function $\eta_\mathrm{D}$ cannot be entire. Following  Ikawa \cite{ikawa1988zeta}, this implies the existence of a strip $\{z \in \C: \: 0 < \Im z \leqslant\alpha\}$ containing an infinite number of resonances $\mu_j$ for the Dirichlet problem. Moreover, for $\alpha \gg 1$ we obtain a lower bound for the resonances lying in this strip.
\end{abstract}

\section{Introduction}

Let $D_1, \dots, D_r \subset \R^d,\: {r \geqslant 3},\: d \geqslant 2,$ be compact strictly convex disjoint obstacles with smooth boundary and let $D = \bigcup_{j= 1}^r D_j.$  We assume that every $D_j$ has non-empty interior and throughout this paper we suppose the following non-eclipse condition
\begin{equation}\label{eq:1.1}
D_k \cap {\rm convex}\: {\rm hull} \: ( D_i \cup D_j) = \emptyset, 
\end{equation} 
for any $1 \leqslant i, j, k \leqslant r$ such that $i \neq k$ and $j \neq k$.
Under this condition all periodic trajectories for the billiard flow in $\Omega  = \R^d \setminus \mathring{D}$ are ordinary reflecting ones without tangential intersections to the boundary of $D$. Notice that if (\ref{eq:1.1}) is not satisfied, for generic perturbations of $\partial D$  all periodic reflecting trajectories in $\Omega$ have no tangential intersections to $\partial D$ (see Theorem 6.3.1 in \cite{petkov2017geometry}). We consider the (non-grazing) billiard flow {$\varphi_t$} (see {\S\ref{subsec:smooth}} for a precise definition).  In this paper the periodic trajectories will be called periodic rays and we refer to Chapter 2 in \cite{petkov2017geometry} for basic definitions. For any periodic trajectory $\gamma$, denote by  $\tau(\gamma) > 0$ its period, by $\tau^\sharp(\gamma) > 0$ its primitive period, and by $m(\gamma)$ the number of reflections of $\gamma$ at the obstacles. Denote by  $P_\gamma$ the associated linearized Poincar\'e map (see section 2.3 in \cite{petkov2017geometry} and Appendix A for the definition). Let $\mathcal{P}$ be the set of all periodic rays. The counting function of the lengths of periodic rays satisfies 
\begin{equation} \label{eq:per}
\sharp\{\gamma \in \mathcal{P}:\: \tau^\sharp(\gamma) \leqslant x\} \sim \frac{\e^{a x}}{a x} , \quad x \to + \infty,
\end{equation}
for some $a > 0$ (see for instance, \cite[Theorem 6.5] {Parry1990} for weak-mixing suspension symbolic flow and \cite{ikawa1990poles}, \cite{morita1991symbolic}). In contrast to the case $r = 2$, for $r \geqslant 3$ there exists an infinite number of primitive periodic trajectories and we have (see Corollary 2.2.5 in \cite{petkov2017geometry}) the estimate
\[ \sharp\{\gamma \in \mathcal P:\: \tau(\gamma) \leqslant x\} \leqslant \e^{ a_1 x}, \: x > 0\]
with $a_1 > a.$
Moreover, for some positive constants $C_1, b_1, b_2$ we have (see for instance \cite[Appendix] {petkov1999zeta})
\[
C_1 \e^{b_1 \tau(\gamma)} \leqslant |\det(\mathrm{Id} - P_{\gamma})| \leqslant \e^{b_2 \tau(\gamma)}, \quad \gamma \in \mathcal{P}.
\]
By using these estimates, we define for $\Re(s) \gg 1$ two Dirichlet series
\[
\eta_\mathrm{N}(s) = \sum_{\gamma \in \mathcal{P}} \frac{\tau^\sharp(\gamma)\e^{-s\tau(\gamma)}}{|\det(\mathrm{Id}-P_\gamma)|^{1/2} }, \quad \eta_\mathrm{D}(s) = \sum_{\gamma \in \mathcal{P}} (-1)^{m(\gamma)} \frac{ \tau^\sharp(\gamma) \e^{-s\tau(\gamma)}}{|\det(\mathrm{Id}-P_\gamma)|^{1/2} },
\]
 Here the sums run over all oriented periodic rays. Notice that some periodic rays have only one orientation, while others admits two (see \S\ref{subsec:orientation}). On the other hand, the length $\tau^{\sharp}(\gamma)$, the period $\tau(\gamma)$ and $|\det(\mathrm{Id} - P_{\gamma})|^{1/2}$ are independent of the orientation of $\gamma.$ 

The series $\eta_\mathrm N(s), \: \eta_\mathrm D(s)$ are related to the  resonances of the self-adjoint operators $-\Delta_\mathrm b, \:\mathrm b = \mathrm N, \mathrm D,$ acting on domains ${\mathcal D}_\mathrm b \subset{\mathcal H} = L^2(\R^d \setminus D),$ with Neumann and Dirichlet boundary conditions on $\partial D$, respectively. To explain this relation,  consider the  resolvents  
\[{\mathcal R}_\mathrm b(\mu) = \left(-\Delta_\mathrm b - \mu^2\right)^{-1}: \: {\mathcal H} \rightarrow {\mathcal D}_{\mathrm b},\] 
which are analytic in $\{\mu \in \C:\: \Im \mu < 0\}$. Then ${\mathcal R}_{\mathrm b}(\mu): {\mathcal H}_{\mathrm{comp}} \to {\mathcal D}_{\mathrm{b,loc}}$ has a meromorphic continuation to $\mu \in \C$ if $d$ is odd, and to the logarithmic covering of $\C\setminus \{0\}$ if $d$ is even (see  \cite[Chapter 5]{lax1989scattering} for $d$ odd and  \cite[Chapter 4]{dyatlov2019mathematical}). These resolvents have poles in $\{z \in \C: \: \Im z > 0\}$ and the poles $\mu_j$ are called {\it resonances}. Introduce the distribution $u \in {\mathcal D}'(\R)$ by the formula 
\[
\langle u, \varphi \rangle = 2\, {\rm tr}_{L^2(\R^d)} \:  \int_\R \Bigl(\cos (t \sqrt{- \Delta_{\mathrm{b}}})\oplus 0 - \cos(t \sqrt{-\Delta_0})\Bigr) \varphi(t) \dd t, \quad \varphi \in C^\infty_c(\R).
 \]
Here $\Delta_0$ is the free Laplacian in $\R^d$ and writing $L^2(\R^d) = L^2(\R^d \setminus D) \oplus L^2(D),$ the operator $\cos(t \sqrt{-\Delta_{\mathrm{b}}})\oplus 0$ acts as 0 on $L^2(D)$.
Then for $d$ odd, Melrose \cite{melrose1982scattering}  (see also \cite{bardos1982relation} for a slightly weaker result) proved that $u\vert_{\R \setminus \{0\}}$ is a distribution in ${\mathcal D}'(\R\setminus \{0\})$ having the representation
\[
u(t) = \sum_j m(\mu_j) \e^{i |t| \mu_j},
\] 
where $m(\mu_j)$ is the multiplicity of $\mu_j.$
In the notation we omitted the dependence on the boundary conditions. The above series converges in the sense of distributions since we have a bound $\sharp\{ \mu_j: \: |\mu_j| \leqslant r\} \leqslant Cr^d$ for all $r > 0$ (see {Section 4.3} in \cite{dyatlov2019mathematical}) and we may express the action $\langle u, \varphi \rangle$ on functions $\varphi \in C_0^{\infty}(\R^+)$ by the derivatives of $\varphi$ (see Lemma B.1 in Appendix B). The reader may consult \cite{zworski1997poisson} and \cite{dyatlov2019mathematical} for the form of the singularity of $u(t)$ at $t=0$, though it is not important for our exposition.

 For $d$ even, the situation is more complicated since the resonances are defined in a logarithmic covering $ \exp^{-1} (\C \setminus \{0\})$ of $\C \setminus \{0\}$ and the arguments of the resonances are not bounded (see \cite{vodev1994}, \cite{vodev1994even}).  Let $\Lambda = \C \setminus \e^{i \frac{\pi}{2} \overline{{\R}^+}}$ and for $0 < \omega < \frac{\pi}{2}$ introduce
\[
{\Lambda_{\omega} = \{ \mu \in \Lambda:\: 0 < \Im \mu \leqslant \omega |\Re \mu|, \: 0 < \arg \mu <  \pi\}}.
\]
 Choose a function $\psi$ in $C_c^{\infty}(\R; [0, 1])$ equal to 1 in a neighborhood of 0 and denote by $\sigma_{\mathrm{b}}(\lambda): = \frac{i}{2 \pi} \log \: {\rm det}\: S_{\mathrm{b}}(\lambda)$ the scattering phase related to $-\Delta_{\mathrm{b}}$, where $S_{\mathrm{b}}(\lambda)$ is the scattering matrix (see Definition 4.25 in \cite{zworski1998poisson} for $S_{\mathrm{b}}(\lambda)$). Following the work of Zworski {(Theorem 1 in \cite{zworski1998poisson})}, there exists a function $v_{\omega, \psi} \in C^{\infty} (\R \setminus \{0\})$ such that {for even dimension $d$} one has in the sense of distributions ${\mathcal D}' (\R \setminus \{0\})$
\begin{equation}\label{eq:tracezworski}
\begin{aligned}
u(t) =  &\sum_{\mu_j \in \Lambda_\omega} m(\mu_j) \e^{i \mu_j |t|} + m(0) \\
&\qquad + 2 \int_0^{\infty} \psi(\lambda) \frac{\dd \sigma_{\mathrm{b}}}{\dd \lambda}(\lambda) \cos(t \lambda) \dd\lambda + v_{\omega, \psi}(t),
\end{aligned}
\end{equation}
where $m(0)$ is a constant and
\[ \partial_t^k v_{\omega, \psi}(t) = {\mathcal O}(|t|^{-N}),\: \forall k, \: \forall N,\: |t| \to \infty.\]
 The reader may consult  \cite{sjostrand1997trace} for a local trace formula involving the resonances.
Concerning the singularities of the distribution $u(t) \in {\mathcal D}'(\R \setminus \{0\})$,  from \cite{bardos1982relation} it follows that
  \[\text{sing supp} \: u \subset \{ \pm\tau(\gamma):\: \gamma \in {\mathcal P}\}.\]
Under the condition (\ref{eq:1.1}), every periodic trajectory $\gamma$ with period $T = \tau(\gamma)$ is an ordinary reflecting ray and the singularity of $u$ at {$t=T$} was described by Guillemin and Melrose \cite{guillemin1979poisson}. More precisely, the singularity at {$T$} has the form
\begin{equation}\label{eq:sing}
{\sum_{\gamma \in {\mathcal P}, \tau(\gamma) = T} (-1)^{m(\gamma)} \tau^{\sharp}(\gamma) |\det(\mathrm{Id} - P_{\gamma})|^{-1/2}\delta(t - T) + L^1_{\mathrm {loc}}(\R)}
\end{equation} 
(see for instance, Corollary 4.3.4 in \cite{petkov2017geometry}), where for the Neumann problem the factor $(-1)^{m(\gamma)}$ must be omitted.
Taking the sum of the Laplace transforms of the singularities of $u(t)\vert_{\R^+}$ at  $\tau(\gamma), \:\gamma \in {\mathcal P}$, we obtain the Dirichlet series $\eta_\mathrm N(s), \: \eta_\mathrm D(s)$. 

The poles of $\eta_\mathrm N(s)$ and $\eta_\mathrm D(s)$ are important for the analysis of the distribution of the resonances (see \cite{ikawa1988analysis, ikawa1990poles,ikawa1990zeta,ikawa1992addentum,stoyanov2009poles,petkov2008zeta}  and the papers cited there). By using the Ruelle transfer operator and symbolic dynamics  (see \cite{ikawa1990zeta,petkov1999zeta,stoyanov2009poles,morita1991symbolic}), a meromorphic continuation of $s \mapsto \eta_\mathrm N(s), \eta_\mathrm D(s)$ has been proved in a domain {$s_0 - \epsilon \leqslant \Re s$} with a suitable {$\epsilon > 0,$} where $s_0$ is the abscissa of absolute convergence of the Dirichlet series $\eta_{\mathrm N}(s),\:\eta_\mathrm D(s).$ In particular,  these results imply the asymptotic (\ref{eq:per}). Recently, a meromorphic continuation to $\C$ of the series
\begin{equation} \label{eq:zeta,Poin}
\sum_{\gamma \in \mathcal{P}} \frac{\tau^\sharp(\gamma)\e^{-s\tau(\gamma)}}{|\det(\mathrm{Id}-P_\gamma)| }, \quad \Re(s) \gg 1,
\end{equation} 
has been proved by Delarue--Sch\"{u}tte-Weich (see Theorem 5.8, \cite{delarue2022resonances}). We refer also to  \cite{Weich2023}  for results concerning weighted zeta functions. On the other hand, a meromorphic continuation to the whole complex {plane} of the semi-classical zeta function for contact Anosov flows was established by Faure--Tsujii {\cite{faure2017semiclassical}. Their zeta function is similar to the function $\zeta_{\mathrm N}(s)$ defined in \eqref{eq:zetab} below. The meromorphic continuation of the Ruelle zeta function {$\prod_{\gamma \in \mathcal P} ( 1 - \e^{-s \tau(\gamma)})^{-1}$} for general Anosov flows was established by Giulietti--Liverani--Pollicott \cite{giulietti2013anosov} (see also the work of Dyatlov--Zworski \cite{dyatlov2016zetafunction} for another proof based on microlocal analysis). In this paper the series $\eta_\mathrm N(s), \: \eta_\mathrm D(s)$ are simply called dynamical zeta functions following previous works \cite{petkov1999zeta, petkov2008zeta} and we refer to the book of Baladi \cite{baladi2018dynamical} for more references concerning zeta functions for hyperbolic dynamical systems. 

Our main result is the following
\begin{theo}\label{thm:main}
Let {$d \geqslant 2$ and let the obstacles $D_j, j = 1,...,r,$ satisfy the condition $(\ref{eq:1.1})$}. Then the series $\eta_{\mathrm{N}}(s)$ and $\eta_{\mathrm{D}}(s)$ admit a meromorphic continuation to the whole complex plane with simple poles and integer residues.
\end{theo}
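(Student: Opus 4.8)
The plan is to present $\eta_{\mathrm N}$ and $\eta_{\mathrm D}$ as logarithmic derivatives of zeta functions that are manifestly meromorphic on $\C$, so that the continuation, the simplicity of the poles and the integrality of the residues all become formal consequences. For $\mathrm b\in\{\mathrm N,\mathrm D\}$ set $\varepsilon_{\mathrm N}(\gamma):=1$, $\varepsilon_{\mathrm D}(\gamma):=(-1)^{m(\gamma)}$, and for $\Re s\gg1$ put
\[
\zeta_{\mathrm b}(s)\;:=\;\exp\!\Bigl(-\sum_{\gamma\in\mathcal P}\frac{1}{p(\gamma)}\,\frac{\varepsilon_{\mathrm b}(\gamma)\,\e^{-s\tau(\gamma)}}{|\det(\mathrm{Id}-P_\gamma)|^{1/2}}\Bigr),
\]
where $p(\gamma):=m(\gamma)/m^\sharp(\gamma)\in\N$ is the number of repetitions of the primitive ray underlying $\gamma$ (with $m^\sharp(\gamma)$ its number of reflections); the series converges for $\Re s\gg1$ by the length and determinant estimates recalled above, and since $\tau(\gamma)=p(\gamma)\tau^\sharp(\gamma)$ a term‑by‑term differentiation gives $\zeta_{\mathrm b}'/\zeta_{\mathrm b}=\eta_{\mathrm b}$ on the half‑plane of convergence. (This $\zeta_{\mathrm b}$ is the open‑billiard counterpart of the semiclassical zeta function of Faure--Tsujii \cite{faure2017semiclassical}, cf.\ \eqref{eq:zetab}.) It thus suffices to continue $\zeta_{\mathrm b}$ meromorphically to $\C$: its zeros, and a priori poles, then become simple poles of $\eta_{\mathrm b}$ with residues equal to $\pm$ the respective multiplicities, hence integers. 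This is a purely dynamical statement, insensitive to the parity of $d$.

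\emph{Dynamical reduction.} Under \eqref{eq:1.1} the non‑wandering set of the billiard flow $\varphi_t$ is compact, totally disconnected, uniformly hyperbolic, and free of grazing rays; more usefully, in a neighbourhood of it $\varphi_t$ is the suspension, by the smooth positive flight‑time function $\ell$, of the \emph{billiard ball map} $\mathcal B$, which near its own trapped set $\Gamma\subset B^*\partial D$ is a genuine $C^\infty$ hyperbolic symplectomorphism, topologically conjugate to the subshift of finite type on $\{1,\dots,r\}$ with forbidden transitions $i\to i$ (Ikawa, Morita, Stoyanov; see also \cite{petkov2017geometry}). Periodic rays $\gamma$ with $m(\gamma)=n$ correspond to $\mathcal B$‑periodic points of period $n$; $d\mathcal B^n$ around such an orbit is conjugate to $P_\gamma$, and $\tau(\gamma)$ is the sum of the flight times along the orbit. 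I would therefore realise $\zeta_{\mathrm b}$ as a discrete‑time Fredholm determinant for $\mathcal B$ — which is smooth near $\Gamma$, unlike $\varphi_t$ itself — localised near $\Gamma$.

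\emph{Transfer operator and Fredholm determinant.} Following the microlocal treatment of open hyperbolic systems (Faure--Sj\"ostrand; Dyatlov--Guillarmou) and its implementation for obstacle scattering by Delarue--Sch\"utte--Weich \cite{delarue2022resonances}, I would construct for each $s\in\C$ and $\mathrm b\in\{\mathrm N,\mathrm D\}$ a holomorphic family $\mathcal L_{\mathrm b}(s)$ of operators on anisotropic spaces of distributional half‑densities microlocalised near $\Gamma$: a Fourier integral operator quantising a cutoff of $\mathcal B$, weighted by $\e^{-s\ell}$ along the flight and carrying, at each bounce, the high‑frequency reflection coefficient of the boundary condition ($+1$ for Neumann, $-1$ for Dirichlet). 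Since $\mathcal B$ is a \emph{symplectomorphism}, the Fourier‑integral‑operator (Gutzwiller) trace formula gives the flat trace of $\mathcal L_{\mathrm b}(s)^n$ as a sum over $\mathcal B$‑periodic points of period $n$ with weight $|\det(\mathrm{Id}-d\mathcal B^n)|^{-1/2}$ — this is the origin of the exponent $\tfrac12$, in accordance with the Guillemin--Melrose singularity \eqref{eq:sing} of $u(t)$ — namely
\[
\Tr^\flat\mathcal L_{\mathrm b}(s)^n\;=\;\sum_{\substack{\gamma\in\mathcal P\\ m(\gamma)=n}} m^\sharp(\gamma)\,\frac{\varepsilon_{\mathrm b}(\gamma)\,\e^{-s\tau(\gamma)}}{|\det(\mathrm{Id}-P_\gamma)|^{1/2}},
\]
so that, after summing $-\tfrac1n\Tr^\flat\mathcal L_{\mathrm b}(s)^n$, one gets $\det(\mathrm{Id}-\mathcal L_{\mathrm b}(s))=\zeta_{\mathrm b}(s)$ for $\Re s\gg1$. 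Because $\mathcal B$ is $C^\infty$ and uniformly hyperbolic near the compact set $\Gamma$, the anisotropic spaces can be chosen — enlarging the anisotropy order, as in the $C^\infty$ transfer‑operator theory; see \cite{baladi2018dynamical} and \cite{delarue2022resonances} — so that $\mathcal L_{\mathrm b}(s)$ is trace class for every $s$ and $\det(\mathrm{Id}-\mathcal L_{\mathrm b}(s))$ is an entire function of $s$, independent of the chosen space by intrinsicality of the flat trace. Hence $\zeta_{\mathrm b}$ extends to an entire function and $\eta_{\mathrm b}=\zeta_{\mathrm b}'/\zeta_{\mathrm b}$ extends meromorphically to $\C$ with simple poles, at the zeros of $\zeta_{\mathrm b}$, and integer residues, as claimed.

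\emph{Main obstacle.} The symbolic‑dynamics and hyperbolicity input is classical and the $C^\infty$ anisotropic/Fredholm machinery is by now standard, so the core of the argument is the construction and analysis of $\mathcal L_{\mathrm b}(s)$: (i) turning high‑frequency solutions of $(-\Delta_{\mathrm b}-\lambda^2)u=0$ near the obstacles into an honest FIO quantisation of $\mathcal B$ with the correct principal symbol and boundary‑condition‑dependent reflection factor, valid microlocally in a neighbourhood of the (crucially, non‑grazing) trapped set $\Gamma$; (ii) the exact‑constant verification — a Poisson‑relation/Gutzwiller computation matching \eqref{eq:sing} — that the flat trace carries precisely the weight $|\det(\mathrm{Id}-P_\gamma)|^{-1/2}\varepsilon_{\mathrm b}(\gamma)$; and (iii) checking that $\mathcal L_{\mathrm b}(s)$ fits the open‑system anisotropic calculus with essential spectral radius tending to $0$ as the order grows. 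In effect one must upgrade the framework of \cite{delarue2022resonances} from the scalar picture (which yields the $|\det(\mathrm{Id}-P_\gamma)|^{-1}$ weight of \eqref{eq:zeta,Poin}) to the half‑density/FIO picture and twist it by the Dirichlet sign cocycle; this is the step where I expect the bulk of the technical effort, and where the non‑eclipse condition \eqref{eq:1.1} — providing a hyperbolic trapped set with no grazing — is indispensable.
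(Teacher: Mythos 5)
Your reduction to a discrete-time Fredholm determinant hides the two points on which the whole argument stands or falls, and neither is justified. First, the exact trace identity. If $\mathcal L_{\mathrm b}(s)$ is an honest composition-type transfer operator (weighted pullback by $\mathcal B^{-1}$ acting on functions or on half-densities over the phase space $B^*\partial D$), then, since $\mathcal B$ is symplectic, the Atiyah--Bott/Guillemin flat trace of $\mathcal L_{\mathrm b}(s)^n$ produces the weight $|\det(\mathrm{Id}-\dd\mathcal B^n)|^{-1}$, i.e. the exponent $1$ of \eqref{eq:zeta,Poin}, not the exponent $\tfrac12$ you need; the only way to manufacture the $\tfrac12$ with a multiplicative weight is to insert $|\det \dd\mathcal B^n|_{E_u}|^{1/2}$, which is merely H\"older because $E_u$ is, so the $C^\infty$ anisotropic machinery you invoke does not apply. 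If instead $\mathcal L_{\mathrm b}(s)$ is a genuine FIO quantisation of $\mathcal B$ on the boundary (Gutzwiller picture), the weight $|\det(\mathrm{Id}-\dd\mathcal B^n)|^{-1/2}$ appears only as the leading term of a stationary-phase expansion in a large frequency parameter, with Maslov factors and lower-order corrections; there is then no \emph{exact} identity $\Tr^\flat\mathcal L_{\mathrm b}(s)^n=\sum_{m(\gamma)=n}m^\sharp(\gamma)\varepsilon_{\mathrm b}(\gamma)\e^{-s\tau(\gamma)}|\det(\mathrm{Id}-P_\gamma)|^{-1/2}$, hence no identity $\det(\mathrm{Id}-\mathcal L_{\mathrm b}(s))=\zeta_{\mathrm b}(s)$ even for $\Re s\gg1$. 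Second, the claim that for a $C^\infty$ (not analytic) hyperbolic system the operator can be made \emph{trace class} on a suitable anisotropic space, so that the determinant is entire in $s$, is not standard and is not what the $C^\infty$ theory gives: one gets quasi-compactness with small essential spectral radius, and entirety of dynamical determinants (Baladi--Tsujii type results) is known for transfer operators with \emph{smooth} weights, which is exactly what you do not have here. Note that if your trace-class/entirety claim held for $C^\infty$ boundaries, a good part of the analyticity hypothesis needed in Theorem \ref{thm:mlpc} (where Fried's theorem for \emph{real analytic} flows is used to get entire zeta functions of finite order) would be superfluous; this should be a warning sign. Your own ``main obstacle'' paragraph in effect defers precisely these two steps, which are the proof.

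For contrast, the paper's route is designed to avoid both difficulties: it stays with the continuous-time non-grazing flow in the Delarue--Sch\"utte--Weich smooth model, lifts to the Grassmannian bundle $G$, and replaces the non-smooth weight by the smooth cocycle $b_t(\omega)$ on the bundles $\cal E_{k,\ell}=\wedge^k\cal E^*\otimes\wedge^\ell\cal F$; the exponent $\tfrac12$ then comes out of the exact finite alternating-sum identity of Lemma \ref{lem:alternatedsum}, not from a quantisation. Meromorphy requires only the Dyatlov--Guillarmou meromorphic continuation of the cut-off resolvents $\Rbf_{k,\ell}(s)$ (no trace-class property), the Guillemin flat trace formula \eqref{eq:atiyahbott}, and the Laurent expansion \eqref{eq:laurent}, whose finite-rank projectors give simple poles with integer residues via \eqref{eq:tracezero}--\eqref{eq:tracerank}; the Dirichlet signs are handled by the flat $q$-reflection bundle and the relation $\eta_{\mathrm D}=2\eta_2-\eta_{\mathrm N}$. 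If you want to salvage a discrete-time FIO/Fredholm-determinant proof, you would essentially have to redo the Faure--Tsujii prequantum analysis for open billiard maps, which is a substantial research project rather than an application of standard machinery.
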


One may also consider the zeta functions $\zeta_\mathrm{b}(s)$ associated to the  boundary conditions $\mathrm{b} = \mathrm D, \mathrm N,$  defined  for $\Re s$ large enough by
\begin{equation}\label{eq:zetab}
\zeta_\mathrm{b}(s) = \exp \left(-\sum_{\gamma \in \mathcal{P}} (-1)^{m(\gamma) \varepsilon(\mathrm b)} \frac{\e^{-s\tau(\gamma)}}{\mu(\gamma) |\det(\mathrm{Id}-P_\gamma)|^{1/2} }\right), 
\end{equation}
where $\varepsilon(\mathrm D) = 1,\: \varepsilon(\mathrm N) = 0$ and $\tau(\gamma) = \mu(\gamma) \tau^{\sharp}(\gamma);$ {$\mu(\gamma) \in \N$ is the repetition number}.
Notice that we have
\begin{equation}\label{eq:linketazeta}
\frac{\zeta_\mathrm{b}'(s)}{\zeta_\mathrm{b}(s)} = \eta_\mathrm b(s), \quad \mathrm{b} = \mathrm{D, N},\quad \Re s \gg 1.
\end{equation}
In particular, since by the above theorem $\eta_\mathrm b(s)$ has simple poles with integer residues, it follows by a classical argument of complex analysis that we have the following
\begin{corr}
{Under the assumptions of Theorem 1} for $\mathrm b = \mathrm{D, N},$ the function $s\mapsto \zeta_\mathrm b(s)$ extends meromorphically to the whole complex plane.
\end{corr}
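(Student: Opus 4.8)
The plan is to carry out the classical argument that recovers a function from its logarithmic derivative, the only nontrivial input being the integrality of the residues of $\eta_\mathrm{b}$ furnished by Theorem~\ref{thm:main}. First I would fix $s_0$ large enough that the series appearing in the exponent of \eqref{eq:zetab} converges absolutely and locally uniformly on the half-plane $\Pi := \{\Re s > s_0\}$; there $\zeta_\mathrm{b}$ is the exponential of a holomorphic function, hence holomorphic and nowhere vanishing, and \eqref{eq:linketazeta} holds. Let $\mathcal{S} \subset \C$ be the set of poles of the meromorphic continuation of $\eta_\mathrm{b}$ given by Theorem~\ref{thm:main}; it is discrete and closed in $\C$, and after enlarging $s_0$ we may assume $\mathcal{S} \cap \Pi = \emptyset$, so that $\eta_\mathrm{b}$ is holomorphic on the connected open set $\C \setminus \mathcal{S}$.

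Next I would extend $\zeta_\mathrm{b}$ to $\C \setminus \mathcal{S}$ by continuation along paths. Fixing a base point $s_\star \in \Pi$ and a path $c$ in $\C \setminus \mathcal{S}$ from $s_\star$ to $s$, put
\[
\zeta_\mathrm{b}(s) := \zeta_\mathrm{b}(s_\star)\,\exp\!\left(\int_c \eta_\mathrm{b}(w)\,\dd w\right).
\]
Two choices of $c$ differ by a loop in $\C \setminus \mathcal{S}$, around which $\int \eta_\mathrm{b}(w)\,\dd w$ equals $2\pi i$ times the sum of the residues of $\eta_\mathrm{b}$ at the enclosed poles, by the residue theorem. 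Since those residues are integers by Theorem~\ref{thm:main}, the integral lies in $2\pi i\,\Z$ and the exponential is unchanged; hence $\zeta_\mathrm{b}$ is a well-defined holomorphic, nowhere-vanishing function on $\C \setminus \mathcal{S}$. Its logarithmic derivative is $\eta_\mathrm{b}$, and it agrees with the original $\zeta_\mathrm{b}$ on $\Pi$ (both are nonvanishing with logarithmic derivative $\eta_\mathrm{b}$ and take the same value at $s_\star$).

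It remains to examine $\zeta_\mathrm{b}$ near a point $s_1 \in \mathcal{S}$. The pole of $\eta_\mathrm{b}$ at $s_1$ being simple, write $\eta_\mathrm{b}(s) = \dfrac{m_1}{s - s_1} + h(s)$ on a punctured disk about $s_1$ with $m_1 \in \Z$ and $h$ holomorphic, and let $H$ be a holomorphic primitive of $h$ there. Comparing logarithmic derivatives shows that on that punctured disk
\[
\zeta_\mathrm{b}(s) = c_1\,(s - s_1)^{m_1}\,\e^{H(s)}
\]
for a constant $c_1 \neq 0$, so $\zeta_\mathrm{b}$ extends meromorphically across $s_1$: with a zero of order $m_1$ if $m_1 > 0$, a pole of order $-m_1$ if $m_1 < 0$, and holomorphically without zero if $m_1 = 0$. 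Doing this at every point of $\mathcal{S}$ gives the meromorphic continuation of $\zeta_\mathrm{b}$ to all of $\C$. There is no serious obstacle beyond this bookkeeping; the hypotheses of Theorem~\ref{thm:main} enter solely through the integrality of the residues, which is precisely what makes the exponential single-valued on $\C \setminus \mathcal{S}$ and turns the local factors $(s - s_1)^{m_1}$ meromorphic rather than multivalued.
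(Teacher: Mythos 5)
Your argument is correct and is exactly the ``classical argument of complex analysis'' the paper invokes without detail: integrate $\eta_\mathrm b$ along paths, use the simple poles with integer residues from Theorem~\ref{thm:main} to make the exponential single-valued, and obtain local factors $(s-s_1)^{m_1}$ that render $\zeta_\mathrm b$ meromorphic across each pole of $\eta_\mathrm b$. Nothing in your write-up deviates from, or adds a gap to, the paper's intended proof.
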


In fact, we will prove a slightly more general result. For $q \in \mathbb N,\: q \geqslant 2$, consider the Dirichlet series
\[
\eta_q(s)  = \sum_{{\gamma \in {\mathcal P}},\:m(\gamma) \in q \mathbb N} \frac{\tau^\sharp(\gamma) \e^{-s\tau(\gamma)}}{{|\det(\mathrm{Id}-P_\gamma)|}^{1/2}}, \quad \Re(s) \gg 1,
\]
where the sum runs over all periodic rays $\gamma$ with $m(\gamma) \in q \N$. We will show that $\eta_q(s)$ admits a meromorphic continuation to the whole complex plane, with simple poles and residues valued in $\Z / q$ (see Theorem \ref{thm:dirichletq}). In particular, considering the function $\zeta_q(s)$ defined by 
\[
\zeta_q(s) := \exp \left(-\sum_{\gamma \in \mathcal{P},\: m(\gamma) \in q \N}  \frac{\e^{-s\tau(\gamma)}}{\mu(\gamma) |\det(\mathrm{Id}-P_\gamma)|^{1/2} }\right), \quad \Re s \gg 1,
\]
one gets $q \zeta_q' / \zeta_q = q\eta_q$. Thus the function $s \mapsto \zeta_q(s)^q$ extends meromorphically to the whole complex plane since its logarithmic derivative is $q\eta_q$ and by Theorem 4 the function $q\eta_q$ has simple poles with integer residues. One reason for which it is interesting to study these functions is the relation
\begin{equation} \label{eq:1.4} 
\eta_{\mathrm D}(s) =  \frac{\dd}{\dd s} \log \frac{\zeta_2(s)^2}{\zeta_\mathrm N(s)} = 2 \eta_2(s) - \eta_\mathrm N(s),
\end{equation} 
showing that  $\eta_\mathrm D(s)$ for $\Re s \gg 1$ is expressed as the difference of two Dirichlet series with positive coefficients. In particular, to show that $\eta_\mathrm D(s)$ has a meromorphic extension to $\C,$ it is sufficient  to prove that both series $\eta_\mathrm N(s)$ and $\eta_2(s)$ have this property. 

The distribution of the resonances $\mu_j$ in $\C$ depends on the geometry of the obstacles and for trapping obstacles it was conjectured by Lax and Phillips  \cite[page 158] {lax1989scattering} that there exists a sequence of resonances $\mu_j$ with $\Im \mu_j \searrow 0.$  For two disjoint strictly convex obstacles this conjecture is false since there exists a strip $\{z \in \C: \: 0 < \Im z \leqslant a\}$ without resonances (see \cite{Ikawa1983two}).  Ikawa  \cite[page 212]{ikawa1990poles}  conjectured that for {\it trapping obstacles} and $d$ odd there exists $\alpha > 0$ such that  \begin{equation}\label{eq:1.5}
{N_{0, \alpha} = \sharp \{ \mu_j \in \C~:~ 0 < \Im \mu_j \leqslant\alpha\} = \infty}.
\end{equation} 
For $d$ even we must consider
\begin{equation} \label{eq:1.5even}
{N_{0, \alpha} = \sharp \{ \mu_j \in \exp^{-1} (\C \setminus \{0\})~:~ 0 < \Im \mu_j \leqslant\alpha,\: 0 < {\rm arg}\: \mu_j < \pi\}}
\end{equation}
since a meromorphic extension of ${\mathcal R}_\mathrm D(\mu)$ is possible to the covering ${\exp^{-1} (\C \setminus \{0\})}$ of $\mathbb C \setminus \{0\}$ (see \cite{vodev1994}, \cite{vodev1994even} for the counting function of the number of resonances $\mu_j$ when $|\mu_j| \leqslant r$ and $|{\rm arg}\: \mu_j| \to \infty$).
Ikawa called this conjecture modified Lax-Phillips conjecture (MLPC).  In this direction, for $d$ odd, Ikawa \cite{ikawa1988zeta, ikawa1990poles} proved  for strictly convex disjoint obstacles satisfying (\ref{eq:1.1}) that if $\eta_{\mathrm N}(s)$ or $\eta_{\mathrm D}(s)$ cannot be prolonged as {\it entire functions} to $\C$, then there exists {$\alpha > 0$} for which (\ref{eq:1.5}) holds for the Neumann or Dirichlet boundary problem. {Notice that the value $\alpha > 0$ in \cite{ikawa1990poles} is related to the singularity of $\eta_{\mathrm D}(s)$ and to some dynamical characteristics}. The proof in \cite{ikawa1990poles} can be modified to cover also the case $d$ even, applying the trace formula of Zworski \eqref{eq:tracezworski} and the results of Vodev \cite{vodev1994}, \cite{vodev1994even} ({see Appendix B}). It is important to note that the meromorphic continuation of $\eta_{\mathrm D}(s)$ to $\C$ was not established previously and to apply the result of Ikawa we need to show that some (analytic) singularity exists.
The existence of a such singularity is trivial for the Neumann problem since $\eta_{\mathrm N}(s)$  is a Dirichlet series with positive coefficients, and by the theorem of Landau (see for instance, \cite[Théorème 1, Chapitre IV]{bernstein1933}), $\eta_{\mathrm N}(s)$ must have a singularity at $s_0 \in \R$, where $s_0$ is the abscissa of absolute convergence of $\eta_{\mathrm N}(s)$. Moreover, for $d$ odd it was proved (see \cite{petkov2002poles}) that there are constants $c_0 > 0,\: \varepsilon_0> 0$ such that for every $0< \varepsilon \leqslant \varepsilon_0$ we have a lower bound
\[\sharp \left\{ \mu_j \in \C: \: 0 < \Im \mu_j \leqslant \frac{c_0}{\varepsilon},\: |\mu_j| \leqslant r\right\} \geqslant C_{\varepsilon} r^{1 - \varepsilon}, \: r \to \infty.\]
\vspace{0.08cm}
 The situation for the Dirichlet problem is more complicated since $\eta_{\mathrm D}(s)$ is analytic for $\Re s \geqslant s_0$, $s_0$ being the abscissa of absolute convergence \cite{petkov1999zeta}.   Moreover, for $d = 2$  \cite{stoyanov2001spectrum} and for $d\geqslant 3$ under some conditions  \cite{stoyanov2012non} Stoyanov proved that there exists $\varepsilon > 0$ such that $\eta_{\mathrm D}(s)$ is analytic for $\Re s \geqslant s_0 - \varepsilon.$ The reason of this cancellation of singularities is related to the change of signs in the  Dirichlet series defining $\eta_{\mathrm D}(s)$, as it is emphasised by the relation \eqref{eq:1.4}. Despite many works in the physical literature concerning the $n$-disk problem (see for example \cite{cvitanovic1997quantum, wirzba1999quantum,lin2002quantum,potzuweit2012weyl,barkhofen2013experimental} and the references cited there), a rigorous proof of the (MLPC) was established only for sufficiently small balls \cite{ikawa1990zeta} and for obstacles with sufficiently small diameters \cite{stoyanov2009poles}. 

 In this direction we prove the following
\begin{theo}\label{thm:mlpc}
{Assume the boundary $\partial D$ real analytic.   Under the assumptions of Theorem $1$}, the function $\eta_{\mathrm{D}}$ has at least one pole  and the $(\mathrm{MLPC})$ is {satisfied} {for the Dirichlet problem.} Moreover, for every $0 < \delta < 1$ there exists $\alpha_{\delta} > 0$ such that for $\alpha > \alpha_{\delta}$ and $d$ odd we have
\begin{equation} \label{eq:1.10o}
\sharp \{ \mu_j \in \C: \: 0 < \Im \mu_j \leq  {\alpha}, \: |\mu_j | \leqslant r \} \neq  {\mathcal O}(r^{\delta}),
\end{equation} 
while for $d$ even we have
\begin{equation} \label{eq:1.11e}
\sharp \{ \mu_j \in \Lambda_{\omega}: \: 0 < \Im \mu_j  \leq {\alpha}, \; 0 < {\rm arg}\: \mu_j < \pi,\: |\mu_j | \leqslant r\} \neq {\mathcal O} (r^{\delta}).
\end{equation}
\end{theo}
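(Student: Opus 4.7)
The plan is threefold: (i) show $\eta_{\mathrm D}$ cannot be entire by combining Theorem \ref{thm:main} with Fried's finite-order theorem \cite{fried1995meromorphic}; (ii) invoke Ikawa's argument \cite{ikawa1988zeta,ikawa1990poles} to convert a pole of $\eta_{\mathrm D}$ into infinitely many Dirichlet resonances in a horizontal strip; (iii) refine the argument to obtain the quantitative bounds \eqref{eq:1.10o} and \eqref{eq:1.11e}.

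For (i), assume $\eta_{\mathrm D}$ extends as an entire function. By \eqref{eq:linketazeta}, $\zeta_{\mathrm D}$ is then entire and nowhere vanishing. Under the non-eclipse condition \eqref{eq:1.1}, the billiard map on the non-wandering set admits a Markov coding (Morita \cite{morita1991symbolic}), and real analyticity of $\partial D$ upgrades it to a real analytic coding. Fried's theorem then ensures $\zeta_{\mathrm D}$ is of finite order. By Hadamard factorization, an entire function of finite order without zeros is of the form $\exp P$ for a polynomial $P$; but the Dirichlet series $-\log \zeta_{\mathrm D}(s)$ tends to $0$ as $\Re s \to +\infty$, forcing $P \equiv 0$, whence $\zeta_{\mathrm D} \equiv 1$ and $\eta_{\mathrm D} \equiv 0$. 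This contradicts the presence of a short periodic ray $\gamma$ bouncing between two obstacles (for which $m(\gamma) = 2$ so $(-1)^{m(\gamma)} = +1$), whose coefficient in the Dirichlet series for $\eta_{\mathrm D}$ is strictly positive and, being of minimal period, cannot be cancelled by any other orbit.

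For (ii), let $s^\star$ be a pole of $\eta_{\mathrm D}$ and set $\alpha^\star := -\Re s^\star > 0$. By \eqref{eq:sing}, $\eta_{\mathrm D}(s)$ is the Laplace transform of the singular part of $u|_{\R^+}$. The Melrose representation $u(t) = \sum_j m(\mu_j) \e^{i\mu_j t}$ in odd dimension, respectively the Zworski formula \eqref{eq:tracezworski} combined with Vodev's resonance counting estimates \cite{vodev1994, vodev1994even} in even dimension (Appendix B), yields the resonance-side Laplace transform $\sum_j m(\mu_j)(s-i\mu_j)^{-1}$, modulo a remainder analytic in a half-plane $\{\Re s > -\alpha^\star - \varepsilon\}$. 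The pole of $\eta_{\mathrm D}$ at $s^\star$ must therefore be matched by a cluster of simple poles $i\mu_j$ of the resonance sum, which is possible only if infinitely many $\mu_j$ have $\Im \mu_j$ accumulating at $\alpha^\star$. This gives \eqref{eq:1.5} for any $\alpha > \alpha^\star$.

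For (iii), the exponential counting \eqref{eq:per} together with the bounds on $|\det(\mathrm{Id} - P_\gamma)|$ gives polynomial growth of $\eta_{\mathrm D}$ along vertical lines; inserting these into a Phragmén--Lindelöf/Paley--Wiener analysis of the identity $\eta_{\mathrm D}(s) = \sum_j m(\mu_j)/(s - i \mu_j) + (\text{analytic})$ shows that if only $O(r^\delta)$ resonances lay in $\{0 < \Im \mu \leqslant \alpha,\: |\mu| \leqslant r\}$, then on lines $\Re s = -\alpha + \varepsilon$ the resonance sum would be too small in modulus to reproduce the residue of $\eta_{\mathrm D}$ at $s^\star$; this fails as soon as $\alpha$ is chosen larger than an explicit threshold $\alpha_\delta$, giving \eqref{eq:1.10o}-\eqref{eq:1.11e}. \emph{The principal obstacle} lies in step (i): Fried's theorem is stated for real analytic Anosov flows, whereas the billiard flow has boundary reflections. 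One must therefore work with the real analytic billiard map on the non-wandering set, verify that its symbolic suspension is a genuine real analytic flow to which Fried applies, and identify its Ruelle-type zeta function with $\zeta_{\mathrm D}$ up to a factor whose order can be controlled a priori.
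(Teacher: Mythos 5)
Your overall architecture (Fried plus Hadamard to rule out entirety, then Ikawa to produce resonances, then a quantitative refinement) matches the paper's, but step (i) as you propose it has a genuine gap — one you yourself flag and leave unresolved — and it is precisely the point where the paper's construction does real work. You want to apply Fried's finite-order theorem to $\zeta_{\mathrm D}$ directly via a ``real analytic Markov coding'' of the billiard map. This route would fail: Markov partitions and the resulting symbolic suspensions are only H\"older (their boundaries are not analytic), so the suspension is not a real analytic flow and Fried's theorem, which requires an analytic flow acting on analytic bundles, does not apply to it. Moreover, even granting analyticity, the weights $(-1)^{m(\gamma)}|\det(\mathrm{Id}-P_\gamma)|^{-1/2}$ are not flat-trace data in any obvious way. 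The paper circumvents both problems without any coding: the analytic flow-box charts of \S\ref{subsec:smooth} make the non-grazing flow real analytic on $M$, the lift $\wt\varphi_t$ to the Grassmannian bundle $G$ is analytic, the factor $|\det(\mathrm{Id}-P_\gamma)|^{-1/2}$ is realized through the alternating sum over the analytic bundles $\cal E_{k,\ell}$ (Lemma \ref{lem:alternatedsum}), and the sign $(-1)^{m(\gamma)}$ is absorbed into the $2$-reflection bundle. Fried's theorem is then applied not to $\zeta_{\mathrm D}$ but to $\zeta_1=\zeta_{\mathrm N}$ and $\zeta_2$, which are exactly Fried's flat-trace functions for the analytic virtual bundles $\cal E^+_q-\cal E^-_q$; these are entire of finite order, so $\zeta_2^2/\zeta_1$ is meromorphic of finite order, and the identity \eqref{eq:5.1}, i.e. $\eta_{\mathrm D}=-\frac{\dd}{\dd s}\log(\zeta_2^2/\zeta_1)$ coming from \eqref{eq:1.4}, feeds the Hadamard argument. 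Your concluding contradiction (the shortest ray cannot be cancelled) is essentially the paper's appeal to uniqueness of Dirichlet series and is fine.

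Steps (ii)--(iii) are also thinner than what is needed. The heuristic ``the pole of $\eta_{\mathrm D}$ must be matched by a cluster of poles $i\mu_j$ of $\sum_j m(\mu_j)(s-i\mu_j)^{-1}$ modulo a half-plane-analytic remainder'' is not a proof: that resonance sum has no obvious convergence or half-plane control, and the passage from $u$ to $\eta_{\mathrm D}$ involves $L^1_{\mathrm{loc}}$ remainders at each period that must be estimated via Ikawa's asymptotic-solution construction. The paper's quantitative bound \eqref{eq:1.10o}--\eqref{eq:1.11e} is obtained in the time domain, not by a Phragm\'en--Lindel\"of argument on vertical lines: one tests $u$ against $\rho_q(t)=\rho(m_q(t-\ell_q))$, uses Ikawa's Proposition 2.3 for the lower bound $|\langle \hat F_{\mathrm D},\rho_q\rangle|\geqslant \e^{-\alpha_0\ell_q}$, Theorem 2.4 for $|\langle u,\rho_q\rangle|\geqslant |\langle \hat F_{\mathrm D},\rho_q\rangle|-C\e^{\alpha_1\ell_q}m_q^{-1}$, and the new Lemma B.1 (valid for all $d\geqslant 2$, using Zworski's formula \eqref{eq:tracezworski} and Vodev's counting bounds for $d$ even) to show that a counting hypothesis $N(\alpha)\leqslant P(\alpha,\delta)r^{\delta}$ forces $|\langle u,\rho_q\rangle|$ to be too small once $\alpha$ is chosen of size $\frac{2d+3}{1-\delta}(\alpha_0+\alpha_1+1)$, yielding the contradiction. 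So your plan needs both the Grassmannian/reflection-bundle mechanism in step (i) and the explicit test-function scheme of Appendix \ref{appendix:b} in step (iii) to become a proof.
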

Thus for the resonances of Dirichlet problem we obtain the analog of the result concerning the Neumann problem mentioned above. More precisely, in Appendix \ref{appendix:b} (see Proposition \ref{prop:2.3ika} and Theorem \ref{thm:5}) we show that there exists $a > 0$ depending on the singularity of $\eta_D$ and the dynamical characteristics of $D$ such that for any $0 < \delta < 1$, if we choose
\[\alpha = \frac{a}{1 - \delta},\]
then for $d$ odd and any constant $0 < C < \infty$ the estimate
\[\sharp \{ \mu_j \in \C: \: 0 < \Im \mu_j  \leq {\alpha}, \: |\mu_j | \leqslant r \} \leqslant C r^{\delta},\quad r \geqslant 1,\]
does not hold. For similar results and reference concerning the Pollicott-Ruelle resonances we refer to \cite[Theorem 2]{zworski2017local} and \cite[Theorem 4.1] {jin2023resonances}.

Our paper relies heavily on the works \cite{dyatlov2016pollicott,delarue2022resonances} and we provide specific references in the text. For convenience of the reader we explain briefly the general idea of the proofs of Theorems \ref{thm:main} and \ref{thm:mlpc}. First, in \S\ref{sec:intro} we make some geometric preparations. 
The non-grazing billiard flow {$\varphi_t$} acts on $M = B/\sim,$ where
\[B = S\R^d \setminus (\pi^{-1} (\mathring{D}) \cup {\mathcal D}_g),\]
 $\pi: S\R^d \rightarrow \R^d$ is the natural projection, 
 $\cal D_\mathrm g = \pi^{-1}(\partial D) \cap T(\partial D)$ is the grazing part and 
 $(x, v) \sim (y, w)$ if and only if $(x,v) = (y,w)$ or $x = y \in \partial D$ and $w$ is equal to the reflected direction of $v$ at $x \in \partial D$. 
 By using this equivalence relation, the flow {$\varphi_t$} is continuous in $M$. However, to apply the Dyatlov--Guillarmou theory \cite{dyatlov2016pollicott} in order  to study the spectral properties of {$\varphi_t$} which are  related to the dynamical zeta functions, we need to work with a {\it smooth flow.} 
 For this reason we use a special {\it smooth structure} on $M$  defined by flow-coordinates introduced in the recent work of Delarue-Sch\"utte-Weich \cite{delarue2022resonances} (see \S\ref{subsec:smooth}). In this smooth model, the flow $\varphi_t$ is smooth, and it is uniformly hyperbolic when restricted to the compact trapped set $K$ of $\varphi_t$ (see \S\ref{subsec:anosov}). 
The periodic points are dense in $K$ and for any $z \in K$ the tangent space $T_z M$ has the decomposition $T_zM= \R X(z) \oplus E_u(z) \oplus E_s(z)$ with unstable and stable spaces $E_u(z),\:E_s(z)$, where $X$ is the generator of $\varphi_t$. 
A meromorphic continuation of the cut-off resolvent $\chi (X+ s)^{-1} \chi$ with $\chi \in C_c^{\infty}(M)$ supported near $K$ has been established in \cite{dyatlov2016pollicott} in a general setting. As in \cite{dyatlov2016zetafunction} and \cite{dyatlov2016pollicott}, the estimates on the wavefront set of the resolvent $\chi(X + s)^{-1}\chi$ allow to define its flat trace which is related to the series (\ref{eq:zeta,Poin}). 
This implies a meromorphic continuation of this series to $\C$ (see \cite{delarue2022resonances}). 

To prove a meromorphic continuation of the series $\eta_\mathrm N(s)$ which involves factors $|\det(\mathrm{Id} - P_{\gamma})|^{-1/2}$ instead of $|\det(\mathrm{Id}-P_\gamma)|^{-1}$, a natural approach would consist to study the Lie derivative $\Lie_X$ acting on sections of the unstable bundle $E_u$ (see for example \cite[pp. 6--8]{faure2017semiclassical}). However, in general, $E_u(z)$ is not smooth with respect to $z$, but only H\"older continuous. Thus we are led to change the geometrical setting as in the work of Faure--Tsujii \cite{faure2017semiclassical} (notice that the Grassmannian bundle introduced below also appears in \cite{bowen1975ergodic} and \cite{gouezel2008compact}). Consider the Grassmannian bundle $\pi_G: G  \to V$ over a neighborhood $V$ of $K$; for every $z \in V$ the fiber $\pi_G^{-1}(z)$ is formed by all $(d-1)$-dimensional planes of $T_z V.$ Define the trapped set $\wt K_u = \{(z, E_u(z)):\: z \in K\} \subset G$ and introduce the natural lifted smooth flow $\wt \varphi_t$ on $G$ {(see \S\ref{subsec:lifting})}. Then according to \cite[Lemma A.3]{bowen1975ergodic}, the set $\wt K_u$ is hyperbolic for $\wt \varphi_t$. We introduce the tautological bundle $\mathcal{E} \to G$ by setting
\[\mathcal{E} = \{(\omega, v) \in \pi_G^*(TV)~:~{\omega \in G},\:v \in [\omega]\},\]
where $[\omega]$ denotes the subspace of {$T_{\pi_G(\omega)}V$} that $\omega \in G$ represents, and $\pi_G^*(TV)$ is the pull-back of the tangent bundle $TV \to V$ by $\pi_G$. Next, we define the vector bundle $\mathcal{F} \to G$ by
\[
\cal{F} = \{(\omega,W) \in TG~:~\dd \pi_G(w) \cdot W = 0 \}
\]
which is the {\og vertical subbundle\fg} of the bundle $TG \to G$. Finally, set
\[
\cal{E}_{k, \ell} = \wedge^k \cal{E}^* \otimes \wedge^\ell \cal{F}, \quad 0 \leqslant k \leqslant d - 1, \quad 0 \leqslant \ell \leqslant d^2- d,
\]
{where $\cal E^*$ is the dual bundle of $\cal E$}. We define a suitable flow $\Phi_t^{k, \ell}: \cal{E}_{k, \ell} \rightarrow \cal{E}_{k, \ell}$ as well as a transfer operator (see \S \ref{subsec:vectorbundle} {for the notations})
\[
{\Phi^{k, \ell, *}_{-t} u(\omega) = \Phi_t^{k, \ell} [{\bf u}(\tilde{\varphi}_{-t}(\omega)], \quad {\bf u} \in C^\infty(G, \cal E_{k, \ell}).}
\]
For a periodic orbit $\gamma(t)$ of $\varphi_t$, this geometrical setting allows to express the term $|\det(\mathrm{Id} - P_{\gamma})|^{-1/2} $ as a finite sum involving the traces $\tr (\alpha^{k, \ell}_{\wt \gamma })$ related to the periodic orbit $\wt \gamma = \{(\gamma(t), E_u(\gamma(t)):\:t \in [0, \tau(\gamma)]\}$ of the flow $\wt \varphi_t$ (see \S\ref{subsec:flattraceresolv} for the notation $\alpha^{k, \ell}_{\wt \gamma}$ and Lemma \ref{lem:alternatedsum}). {This crucial argument explains the introduction of the bundles ${\cal E}_{k, \ell}$ and the related geometrical technical complications.} In this context we may apply the Dyatlov--Guillarmou theory {(see Theorem 1 in \cite{dyatlov2016pollicott})} for the generators 
\[\Pbf_{k,\ell} \ubf  = \left.\frac{\dd}{\dd t} \Bigl(\Phi^{k, \ell,*}_{-t}\ubf\Bigr)\right|_{t=0}, \quad \ubf \in C^\infty(G, \cal E_{k, \ell})\]
of the transfer operators $\Phi^{k, \ell, *}_{-t}$
{(in fact, by using a smooth connexion, we introduce a new operator $\Qbf_{k, \ell}$ which coincides with $\Pbf_{k, \ell}$ near $\tilde{K}_u$ (see \S\ref{subsec:dyatlovguillarmou})).} {This leads to a meromophic continuation of the the cut-off resolvent $\wt\chi  (\Qbf_{k, \ell} + s)^{-1} \wt\chi,$ where $\wt \chi \in C_c^{\infty}(\wt V_u)$ is equal to 1 on $\wt K_u$ (see \ref{subsec:dyatlovguillarmou} for the notations).} {By applying the Guillemin flat trace formula \cite{guillemin1977lectures} (see \cite[Appendix B]{dyatlov2016zetafunction} and Section 3 in \cite{Weich2023})}, concerning 
{\[{\rm tr}^\flat \Bigl(\int_0^{\infty} \varrho(t) \wt \chi( \e^{-t \Qbf_{k, \ell}} \ubf ) \wt \chi \dd t \Bigr),\quad \varrho \in C_c^{\infty} (0, \infty),\]}we obtain the meromorphic continuation of $\eta_\mathrm N$. Finally, the meromorphic continuation of $\eta_q$ is obtained in a similar way, by considering in addition a certain \textit{$q$-reflection bundle} $\cal R_q \to G$ to which the flow $\wt \varphi_t$ can be lifted (see \S\ref{subsec:qreflexion}).

The strategy to prove Theorem \ref{thm:mlpc} is the following. First, the representation (\ref{eq:1.4}) tells us that, if $\eta_\mathrm D(s)$ can be extended to an entire function, then the function $\zeta_2^2/\zeta_\mathrm N$ has neither zeros nor poles on the whole complex plane. For obstacles with real analytic boundary we may use real analytic charts near $\partial D$ to define a real analytic structure on $M$ which makes $\varphi_t$ a real analytic flow. In this setting we may apply a result of Fried \cite{fried1995meromorphic} to the non-grazing  flow $\varphi_t$  lifted to the Grassmannian bundle, and show that the entire functions $\zeta_2$ and $\zeta_\mathrm N$ have {\it finite order.} This crucial point implies that the meromorphic function  $\zeta_2^2/\zeta_\mathrm N$ has also finite order. Finally, by using Hadamard's factorisation theorem, one concludes that we may write $\zeta_2(s)^2/\zeta_\mathrm N(s) = \e^{Q(s)}$ for some polynomial $Q(s)$.  This leads to $\eta_\mathrm D(s) = - Q'(s)$. Since $\eta_\mathrm D(s) \to 0$ as $\Re s \to + \infty$, we obtain a contradiction and $\eta_D(s)$ is not entire. The existence of a singularity of $\eta_D(s)$ implies the lower bound  (\ref{eq:B4})  (see Appendix B) and we obtain (\ref{eq:1.10o}) and (\ref{eq:1.11e}).  Notice that this argument works as soon as the entire functions $\zeta_{2}$ and $\zeta_\mathrm N$ have {\it finite order}. The recent work of Bonthonneau--J\'ez\'equel \cite{bonthonneau2020fbi} about Anosov flows suggests that this should be satisfied for obstacles with \textit{Gevrey} regular boundary $\partial D$.  In particular, the (MLPC) should be true for such obstacles. However in this paper  we are not going to study this generalization.

The paper is organised as follows. In \S\ref{sec:intro} one introduces the geometric setting of the billiard flow $\varphi_t$ and its smooth model. We define the Grassmannian extension $G$ and the bundles $\cal E, \cal F, \: \cal E_{k, l} = \Lambda^k \cal E^{\star} \otimes \Lambda^{\ell} \cal F $ over $G$. Next, we discuss the setting for which we apply the Dyatlov-Guillarmou theory \cite{dyatlov2016pollicott} for {some first order operator ${\bf Q}_{k, \ell}$} leading to a meromorphic continuation of the cut-off resolvent
${\bf R}_{k, \ell} (s) = \tilde{\chi} ({\bf Q}_{k, \ell} + s)^{-1}\tilde{\chi}.$
In \S\ref{sec:neumann} we treat the flat trace of the resolvent ${\bf R}_{\varepsilon}^{k, \ell}(s) = \e^{-\varepsilon ({\bf Q}_{k, \ell} + s) } {\bf R}_{k, \ell}(s), \: \varepsilon > 0,$ and we obtain a meromorphic continuation of $\eta_N$. In \S\ref{sec:dir}  we study the dynamical zeta functions $\eta_q(s)$ for particular rays $\gamma$ having number of reflections $m(\gamma) \in q \N,\: q \geqslant 2$. Applying the result for $\eta_2(s)$, we deduce the meromorphic continuation of $\eta_D.$ Finally, in \S\ref{sec:real}  we treat the modified Lax-Phillips conjecture for obstacles with real analytic boundary and we prove that the function $\eta_{\mathrm D}$ is not entire. In Appendix {A} we present a proof for $d \geqslant 2$ of the uniform hyperbolicity of the flow {$\phi_t$} in the Euclidean metric in $\R^d,$ {while in Appendix B we discuss the modifications of the proof of Theorem 2.1 in \cite{ikawa1990poles} for even dimensions} and we finish the proof of  Theorem 3.

\subsection*{Acknowledgements.} We would like to thank Fr\'ed\'eric Faure, Benjamin Delarue, St\'ephane Nonnenmacher, Tobias Weich and Luchezar Stoyanov for very interesting discussions. Thanks are  also due to Colin Guillarmou for pointing out to us that we could use Fried's result for the proof of Theorem \ref{thm:mlpc}. Finally, we would like to thank the anonymous referee for his comments and suggestions that helped to improve the manuscript. The first author is supported from the European Research Council (ERC) under the European Unions Horizon 2020 research and innovation programme with agreement No. 725967.

\section{Geometrical setting}\label{sec:intro}
\subsection{The billiard flow}\label{subsec:defflow}

Let $D_1, \dots, D_r \subset \R^d$ be pairwise disjoint compact convex obstacles, satisfying the condition (\ref{eq:1.1}), where {$r \geqslant 3$.} We denote by $S\R^d$ the unit tangent bundle of $\R^d$ and by $\pi : S\R^d \to \R^d$ the natural projection. For $x \in \partial D_j$, we denote by $n_j(x)$ the {\it inward unit normal vector} to $\partial D_j$ at the point $x$ pointing into $ D_j.$ Set $D = \bigcup_{j=1}^rD_j$ and  
\[\cal D= \{(x,v) \in S \R^d~:~x \in \partial D\}.\]
We will say that $(x,v) \in T_{\partial D_j}\R^d$ is incoming (resp. outgoing) if $\langle v, n_j(x)\rangle > 0$ (resp. $\langle v, n_j(x) \rangle < 0$), and introduce
\[
\begin{aligned}
\cal D_\mathrm{in} &= \{ (x, v) \in \cal D~:~(x,v) \text{ is } {\rm incoming}\}, \\
\cal D_\mathrm{out}&= \{ (x, v) \in \cal D~:~ (x,v) \text{ is } {\rm outgoing}\}.
\end{aligned}
\]
We define the grazing set $\cal D_\mathrm{g}= T(\partial D) \cap \cal D$ and one gets
\[\cal D = \cal D_\mathrm{g} \sqcup \cal D_\mathrm{in} \sqcup \cal D_\mathrm{out}.\]
 The billiard flow $(\phi_t)_{t \in \R} $ is the complete flow acting on $S\R^d \setminus \pi^{-1}(\mathring{D})$ which is defined as follows. For $(x,v) \in S\R^d\setminus \pi^{-1} (\mathring{D})$ we set
\[
\tau_\pm(x,v) = \pm \inf\{t \geqslant 0: x \pm tv \in \partial D\}
\]
and for $(x, v) \in \cal D_\mathrm{in/out/g}$ we denote by $v' \in \cal D_\mathrm{out/in/g}$  the image of $v$ by the reflexion with respect to $T_x(\partial D)$ at $x \in \partial D$,  that is
\[
v' = v - 2\langle v, n_j(x) \rangle n_j(x), \quad v \in S_x\R^d, \quad x \in \partial D_j
\]
{(see Figure \ref{fig:notations}).}
\begin{figure}[h]
\includegraphics[scale=1]{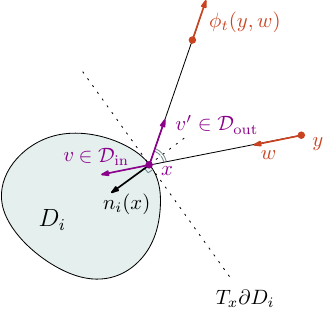}
\caption{The billiard flow $\phi_t$}
\label{fig:notations}
\end{figure}
By convention, we have $\tau_\pm( x, v) = \pm \infty,$ if the ray $ x \pm t v$ has no common point with $\partial D$ for $\pm t > 0.$
Then  for $(x,v) \in (S\R^d \setminus \pi^{-1}(D)) \cup \cal D_{\mathrm g}$ we define
\[
\phi_t(x,v) = (x + tv, v), \quad t \in [\tau_-(x,v), \tau_+(x,v)],
\]
while for $(x,v) \in \cal D_{\mathrm{in/out}}$, we set
\[
\phi_t(x,v) = (x+tv, v) \quad \text{ if } \quad \left \lbrace \begin{matrix} (x, v) \in \cal D_\mathrm{out},~t \in \left[0, \tau_+(x,v)\right], \vspace{0.2cm} \\ \text{or } (x, v) \in \cal D_{\mathrm{in}},~t \in \left[\tau_{-}(x, v) , 0\right], \end{matrix} \right.
\]
and 
\[
\phi_t(x,v) = (x+tv', v') \quad \text{ if } \quad \left \lbrace \begin{matrix} (x, v') \in \cal D_\mathrm{out},~t \in \left]0, \tau_+(x,v)\right], \vspace{0.2cm} \\ \text{or } (x, v) \in \cal D_{\mathrm{in}},~t \in \left[\tau_{-}(x, v') , 0\right[. \end{matrix} \right.
\]
Next we extend $\phi_t$ to a complete flow (which we still denote by $\phi_t$) characterized by the property
\[
\phi_{t+s}(x,v) = \phi_t ( \phi_s(x,v)), \quad t,s \in \R, \quad (x,v) \in S\R^d \setminus \pi^{-1}(D).
\]
Strictly speaking, $\phi_t$ is not a flow, since the above flow property does not hold in full generality for $(x,v) \in \cal D_{\mathrm{in/out}}$. However, we can deal with this problem by considering an appropriate quotient space (see \S\ref{subsec:smooth} below).

\subsection{A smooth model for the non-grazing billiard flow}\label{subsec:smooth}

In this subsection, we briefly recall the construction of \cite[Section 3]{delarue2022resonances} which allows to obtain a smooth model for the non-grazing billiard flow. First, we define the non-grazing billiard table $M$ as 
\[
M = B / \sim, \quad B = S\R^d \setminus \left(\pi^{-1}(\mathring{D}) \cup \cal D_\mathrm{g}\right),
\]
where $(x,v) \sim (y,w)$ if and only if $(x,v) = (y,w)$ or
\[
x = y \in \partial D \quad \text{ and } \quad w = v'.
\]
The set $M$ is endowed with the quotient topology. We will change the notation and pass from $\phi_t$ to the non-grazing flow {$\varphi_t$}, which is defined on $M$ as follows. For $(x,v) \in (S\R^d \setminus \pi^{-1}(D)) \cup \cal D_\mathrm{in}$ we define
\[
{\varphi_t([(x,v)])} = [\phi_t(x,v)], \quad t \in \left]\tau^{\mathrm{g}} _-(x,v), \tau^{\mathrm{g}} _+(x,v)\right[,
\]
where $[z]$ denotes the equivalence class of the vector $z \in B$ for the relation $\sim$, and 
\[
\tau^{\mathrm{g}} _\pm(x,v) = \pm \inf\{t > 0:\phi_{\pm t}(x,v) \in \cal D_\mathrm{g}\}.
\]
{Clearly,  we may have  $\tau_{\pm}^{\mathrm{g}}(x, v)  = \pm\infty.$} On other hand, it is important to note that $\tau^{\mathrm{g}}_{\pm}(x, v) \neq 0$ for $(x, v) \in {\mathcal D}_{\mathrm{in}}.$
Note that this formula indeed defines a flow on $M$ since each $(x,v) \in B$ has a unique representative in $(S\R^d \setminus \pi^{-1}(\mathring{D})) \cup \cal D_\mathrm{in}$.  {Thus $\varphi_t$}  is continuous but the flow trajectory of the point $(x, v)$  for times $t \notin \left]\tau^{\mathrm{g}} _-(x,v), \tau^{\mathrm{g}} _+(x,v)\right[$  is not defined. 

Following \cite[Section 3]{delarue2022resonances}, we define smooth charts on $M = B/\sim$ as follows. 
Introduce the surjective map $\pi_M : B \to M$ by
$\pi_M(x, v) = [(x, v)]$ and note that
\begin{equation}\label{eq:flows}
\varphi_t \circ \pi_M = \pi_M \circ \phi_t.
\end{equation}
Set $\mathring{B}:  = S \R^d \setminus \pi^{-1} (D)$. Then $\pi_M : \mathring{B} \rightarrow M$ is a homeomorphism onto its image ${\mathcal O}.$  Let $\cal G = \pi_M(\cal D_\mathrm{in})$ be the gluing region.  We consider the map $\pi_M^{-1} : {\mathcal O} \to \mathring{B}$. Then we may pull back the smooth structure of $\mathring{B}$ to ${\mathcal O}$ and define the charts on ${\mathcal O}$ by using those of ${\mathring B}.$  Next we wish to define charts in an open neighborhood of $\mathcal G$. For every point $z_\star = (x_\star, v_\star) \in \cal D_{\mathrm{in}}$ let  
\[F_{z_ \star}: U_{z_\star} \times U_{z_\star} \to \cal D_{\mathrm{in}}\] 
be a local smooth parametrization of $\cal D_{\mathrm{in}}$, where $U_{z_\star}$ is an open small neighborhood of $0$ in $\R^{d -1}.$ For small $\varepsilon_{z_\star} > 0$, we may define the map
\[
{\Psi_{z_\star}: \left]-\varepsilon_{z_\star}, \varepsilon_{z_\star} \right[ \times U_{z_\star} \times U_{z_\star} \to M}
\]
by
\begin{equation}\label{eq:defpsiz}
{\Psi_{z_\star}(t, y, w) = (\pi_M \circ \phi_t \circ F_{z_\star})(y, w).}
\end{equation}
 Up to shrinking $U_{z_\star}$ and taking $\varepsilon_{z_\star}$ smaller, {$\Psi_{z_\star}$} is a homeomorphism onto its image $\mathcal O_{z_\star} \subset M$,  (see Corollary 4.3 in \cite{delarue2022resonances}). 
Indeed, repeating the argument of \cite{delarue2022resonances}, to see that $\Psi_{z_\star}$ is injective, let $F_{z_\star}(y_k, w_k) = (x_k, v_k) \in \cal D_{\mathrm{in}},\:  k = 1, 2$, and assume that $\pi_M\phi_{t_1} (x_1, v_1) = \pi_M\phi_{t_2} (x_2, v_2)$. Since the vectors in $\cal D_\mathrm{in}$ are transversal to $\partial D$, we see that for each $z \in \cal O_{z_\star},$ there is a unique $t \in \left]-\varepsilon_{z_\star}, \varepsilon_{z_\star}\right[$ such that {$\varphi_t(z) \in \cal G$}. In particular, we have $t_1 = 0$ if and only if $t_2 = 0$. In this case, $(x_1, v_1) = (x_2, v_2)$ since $\pi_M : \cal D_\mathrm{in} \to \cal G$ is injective. If $t_1 \neq 0, t_2 \neq 0,$ then $t_1$ and $t_2$ have the same sign and by the injectivity of $\pi_M : \mathring{B} \to M$ and the definition of $\phi_t$, we have
\[
\left\lbrace \begin{array}{lll}
(x_1 + t_1 v_1, v_1) = (x_2 + t_2 v_2, v_2) &\text{ if } &t_1, t_2 > 0, \\
(x_1 + t_1 v_1', v_1') = (x_2 + t_2 v_2', v_2') &\text{ if } &t_1, t_2 < 0,
\end{array}
\right.
\]
where $v_k'$ is the reflection of $v_k$ with respect to $T_{x_k}\partial D$ for $k = 1,2.$
Thus one concludes that $(t_1, x_1, v_1)  = (t_2, x_2, v_2).$ As mentioned above, the directions in $\cal D_{\mathrm{in}}$ are transversal to the boundary $\partial D$. This implies that the maps {$\Psi_{z_\star}$} are open. In particular, {$\Psi_{z_\star}$} realises a homeomorphism onto its image ${\mathcal O}_{z_\star}$ and we declare the map {$\Psi_{z_\star}^{-1} : \cal O_{z_\star} \to \left]-\varepsilon_{z_\star}, \varepsilon_{z_\star} \right[ \times U_{z_\star} \times U_{z_\star}$} as a chart.  Hence we obtain an open covering
\[
\mathcal G  \subset \bigcup_{z_\star \in \cal D_{\mathrm{in}}} \mathcal O_{z_\star}.
\]
Note that if $ \mathcal O \cap {\mathcal O}_{z_\star}\neq \emptyset$ for any $z_\star$, clearly the map
{\[(t,x,v) \mapsto (\pi_M^{-1} \circ \Psi_{z_\star})(t,x,v) = (\phi_t \circ F_{z_\star})(x, v) \]}is smooth on {$\Psi_{z_\star}^{-1}(\cal O \cap \cal O_{z_\star})$}. On the other hand, assume that $\mathcal O_{z_\star} \cap \mathcal O_{z_\star'} \neq \emptyset$ for some $z_\star, z_\star' \in \cal D_\mathrm{in}$. If $\pi_M(\phi_{t}(F_{z_\star}(x,v))) = \pi_M(\phi_{s}(F_{z_\star'}(y, w))) \in {\mathcal O}_{z_\star}\cap {\mathcal O}_{z_\star'}$, then as above this yields $t = s,\: F_{z_\star}(x, v) = F_{z_\star'}(y, w),$  and we conclude that 
\begin{equation}\label{eq:changecoord}
\begin{aligned}
(\Psi_{z_\star}^{-1} \circ \Psi_{z_\star'})(t, y, w)  &= \left(\Psi_{z_\star}^{-1} \circ \pi_M \circ \phi_t \circ F_{z_\star'}\right)(y, w) \\
&= \left(\Psi_{z_\star}^{-1} \circ \pi_M \circ \phi_t \circ F_{z_\star}\right)\Bigl((F_{z_\star}^{-1}  \circ F_{z_\star'}) (y, w)\Bigr) \\
& = \left(t, (F_{z_\star}^{-1} \circ F_{z_\star'} )(y, w)\right).
\end{aligned}
\end{equation}
This shows that the change of coordinates {$\Psi_{z_\star}^{-1} \circ \Psi_{z_\star'}$} is smooth on the set {$\Psi_{z_\star'}^{-1}(\cal O_{z_\star} \cap \cal O_{z_\star'})$}, and these charts endow $M$ with a smooth structure. It is easy to see that with this differential structure the flow $(\varphi_t)$ is smooth on $M$. Indeed, this is obvious far from the gluing region $\mathcal G$. Now let $z \in \cal G$ and $z_\star \in \cal D_\mathrm{in}$ be such that $\pi_M(z_\star) = z$. Then for $s,t \in \R$, with $|t| + |s|$ small, and $(y,w) \in U_{z_\star} \times U_{z_\star}$, we have
\[
\begin{aligned}
\left(\Psi_{z_\star}^{-1} \circ \varphi_s \circ \Psi_{z_\star}\right)(t, y, w) &= \left(\Psi_{z_\star}^{-1} \circ \varphi_s \circ \pi_M \circ \phi_t \circ F_{z_\star}\right)(y,w) \\
&= \left(\Psi_{z_\star}^{-1}  \circ \pi_M \circ \phi_{t + s} \circ F_{z_\star}\right)(y, w) \\
&= ( s + t, y, w).
\end{aligned}
\]
Consequently, the flow $(\varphi_t)$ is also smooth near $\cal G$ and we obtain a smooth non-complete flow on $M$. 

\subsection{Oriented periodic rays}\label{subsec:orientation}
A periodic point of the billiard flow is a pair {$(x,v)$ lying in $S\R^d$}, together with a number $\tau > 0$, such that $\phi_\tau(x,v) = (x,v)$. The point $(x, v)$ will be called $\tau$- periodic. A \textit{periodic trajectory} of $\phi_t$, or equivalently an \textit{oriented periodic ray}, is by definition an equivalence class of periodic points, where we identify two periodic {points} $(x,v)$ and $(y,w)$, if they are $\tau$-periodic with the same $\tau$ and if there are $\tau_1, \tau_2 \in \R$ such that $\phi_{\tau_1}(x,v) = \phi_{\tau_2}(y,w)$. Of course, the map $\pi_M$ induces a bijection between oriented periodic rays and periodic orbits of the non-grazing flow $\varphi_t$.  For each periodic orbit $\gamma$, we will denote by $\tau(\gamma)$ its period. Also, we will often identify a periodic orbit with a parametrization {$\gamma : [0, \tau(\gamma)] \to S\R^d$.} 

Note that every oriented periodic ray is determined by a sequence
\[
\alpha_\gamma = (i_1, \dots, i_k),
\]
where $i_j \in \{1,\dots,r\}$, with $i_k \neq i_1$ and $i_j \neq i_{j+1}$ for $j = 1,...,k-1$,
such that $\gamma$ has {\it successive reflections}  on $\partial D_{i_1}, \dots, \partial D_{i_k}$. The sequence $\alpha_{\gamma}$ is well defined modulo cyclic permutation, and we say that the ray $\gamma$ {has type $\alpha_{\gamma}.$}  The non-eclipse condition (\ref{eq:1.1}) implies that the reciprocal is true. {More precisely},  for any sequence $\alpha = {(i_1, \dots, i_k)}$ with  $i_j \neq i_{j+1}$ for $j =1, \dots, k-1$ and $i_k \neq i_1$, there exists a unique periodic ray $\gamma$ such that $\alpha_\gamma = \alpha$ {(see \cite[Proposition 2.2.2 and Corollary 2.2.4]{petkov2017geometry}).}

We conclude this paragraph by some remark on the oriented rays. For every oriented periodic ray $\gamma$ generated by a periodic point $(x,v) \in {\mathring B}$ {and period $\tau$}, one may consider the reversed ray $\bar \gamma$, generated by $(x, -v) \in {\mathring B}$ and {$\tau.$} There are two possibilities. For most rays, $\gamma$ and $\bar \gamma$ {give} rise to {different oriented periodic rays}, even {if their projections in $\R^d$ are the same}. However it might happen that $\bar \gamma$ coincides with $\gamma$. This is the case, for example, if the ray {$\gamma$ has type $\alpha = (1,2)$} (modulo permutation).

\subsection{Uniform hyperbolicity of the flow $\varphi_t$}\label{subsec:anosov}
From now on, we will work exclusively with the flow $\varphi_t$ defined on $M = B/\sim$ by the smooth model described in $\S\ref{subsec:smooth}.$ Let $X$ be the generator of $\varphi_t$. The trapped set $K$ of $\varphi_t$ is defined as the set of points $z \in M$ which satisfy $-\tau_-^{\mathrm{g}}(z) = \tau_+^{\mathrm{g}} (z) = +\infty$ and
\[
\sup A(z) = - \inf A(z) = +\infty, \quad \text{where} \quad A(z) = \{t \in \R~:~\pi(\varphi_t(z)) \in \partial D\}.
\]
By definition, $\varphi_t(z)$ is defined for all $t \in \R$ whenever $z \in K$. The flow $\varphi_t$ is called uniformly hyperbolic on $K$, if for each $z \in K$ there exists a decomposition
\begin{equation} \label{eq:decomp}
T_zM= \R X(z) \oplus E_u(z) \oplus E_s(z),
\end{equation}
which is $\dd \varphi_t$-invariant (in the sense that $\dd \varphi_t(E_\bullet(z)) = E_\bullet(\varphi_t(z))$ for $\bullet = u, s$), with $\dim E_s(z) = \dim E_u(z)  = d - 1$, {such that for some constants $C > 0,\: \nu > 0$, independent of $z \in K$}, and some smooth norm $\|\cdot\|$ on $TM$, we have
\begin{equation}\label{eq:2.6}
\left\|\dd \varphi_t(z) \cdot v\right\| \leqslant \left \lbrace \begin{matrix} C \e^{-\nu t} \|v\|,~ &v \in E_s(z),~&t\geqslant 0, \vspace{0.2cm}  \\  C \e^{-\nu |t|} \|v\|,~ &v \in E_u(z),~&t\leqslant 0. \end{matrix} \right.
\end{equation}
The spaces $E_s(z)$ and $E_u(z)$ depend continuously on $z$ (see \cite[Section 2]{hasselblat2002}).

We may define the trapped set $K_e$ for the flow $\phi_t$ in the Euclidean metric and note that  $K = \pi_M(K_e).$ (Here we use the notation $\phi_t$ for the flow in the Euclidean metric to distinguish it with the flow $\varphi_t$ definite on the smooth model). 
The uniform hyperbolicity on $K_e$ of the flow $\phi_t$  in the Euclidean metric for $z \in \mathring B \cap K_e$ can be defined by the splitting of the tangent space $T_z(\mathring B \cap K_e)$ 
(see Definition 2.11 in \cite{delarue2022resonances} and {Appendix A}). Following this definition, one avoids the  points $(x, v) \in  K_e \cap \mathcal D_{in}.$ To treat these points, 
{denote $\overline{\cal{D}}_\mathrm{in} = \{(x, v): \: x \in \partial D, |v| = 1, \langle v, n(x) \rangle \geqslant 0\}$ and define the {\it billiard ball map}  
\[
{\bf B} : \overline{\cal{D}}_\mathrm{in} \ni (x, v) \longmapsto (y, R_{y}w)   \in \overline{\cal{D}}_\mathrm{in},
\]
where $R_y : S_y\R^d \to S_y\R^d$ is the reflection with respect to $T_y(\partial D)$ and
\[
(y, w) = \phi_{\tau_{+}(x, v)}(x, v),\: \tau_{+}(x, v) = \inf\{t > 0~:~\pi(\phi_t (x, v)) \in \partial D\}.\]
To see that ${\bf B}(x,v)$ is well defined  we need $\tau_{+}(x, v) < \infty$ and this condition is satisfied for $(x, v) \in K_e \cap \overline{\mathcal D}_{\mathrm{in}}.$
The map {\bf B} is called  {\it collision map} in \cite{chernov2006chaotic}, 
and it is smooth (see for instance, \cite{kovachev1988billard}).} 
{For $(x, v) \in  K_e \cap \cal D_{\mathrm{in}}$ we can define $\dd {\bf B}(x, v)$ 
 and this is useful for the estimates of $\|\dd \phi_t(x, v)\|$ for $(x, v) \in \mathring B \cap K_e$} (see \cite[\S4.4]{chernov2006chaotic} and Appendix A).

The uniform hyperbolicity of $\phi_t$ in the Euclidean metric on $\mathring B \cap K_e$ implies  the uniform hyperbolicity of $\varphi_t$ in the smooth model (see \cite[Proposition 3.7]{delarue2022resonances}). Thus, to obtain  \eqref{eq:2.6}, we may apply the uniform hyperbolicity of $\phi_t$ in the Euclidean metric on $\mathring{B} \cap K_e$ established for $d = 2$ in \cite{morita1991symbolic} and \cite[\S4.4]{chernov2006chaotic}. For $d \geqslant 3$, the same could perhaps be obtained by applying the results in  \cite[\S4]{chernov2003multi}. {The hyperbolicity at the points $z= (x, v) \in K_e$ which are not periodic must be justified and the stable/unstable spaces $E_s(z)/ E_u(z)$ must be well determined; for $ d \geqslant 3$ this seems to be not sufficiently detailed in the literature. Since the hyperbolicity of $\varphi_t$ is crucial for our exposition, and for the sake of completeness, we present in Appendix A a proof of the uniform hyperbolicity as well as a construction of $E_s(z)$ and  $E_u(z)$ for all $z \in \mathring{B} \cap K_e.$}

\subsection{The Grassmannian extension}\label{subsec:lifting}
In what follows, we assume that the flow $\varphi_t$ is hyperbolic on $K$ and we will take a small neighborhood $V$ of $K$ in $M$, with smooth boundary. We embed $V$ into a compact manifold without boundary $N$. {For example, we may take the double manifold $N$ of the closure of $V$. This means that $N = \bar{V} \times \{0, 1\}/\sim$ and $(x, 0) \sim (x, 1)$ for all $x \in \partial V$.} We arbitrarily extend $X$ to obtain a smooth vector field on $N$, which we still denote by $X$. The associated flow is still denoted by $\varphi_t$ (however note that this new flow $\varphi_t$ is now complete). 

{For our exposition it is important to introduce} the $(d-1)$-Grassmannian bundle 
\[\pi_G : G \to N\] over $N$. {More precisely}, for every $z \in N$, the set $\pi_G^{-1}(z)$ consists of all $(d-1)$-dimensional planes of $T_zN$.   Moreover, $\pi_G^{-1}(z)$ can be identified with the Grasmannian $G_{d-1}(\R^{2d- 1})$ which is isomorphic to $O(2d - 1) / (O(d-1) \times O(d)),$ $O(k)$ being the space of $(k \times k)$ orthogonal matrices with entries in $\R$. 
The dimension of $O(k)$ is $k(k-1)/2$, hence the dimension of $\pi_G^{-1}(z) $ is $d(d-1).$
 Note that $G$ is a smooth compact manifold. We may lift the flow $\varphi_t$ to a flow $\wt \varphi_t : G \to G$ which is simply defined by
\begin{equation}\label{eq:defflow}
\wt \varphi_t(z,E) = (\varphi_t(z), \dd \varphi_t(z)(E)), \quad  z \in N,\quad E \subset T_zN, \quad \dd \varphi_t(z)(E) \subset T_{\varphi_t(z)}N.
\end{equation}
Introduce the set 
\[
\widetilde K_u = \{(z,E_u(z))~:~z \in K\} \subset G.
\]
Clearly, $\wt K_u$ is invariant under the action of $\wt\varphi_t$, since $\dd\varphi_t(z) (E_u(z)) = E_u(\varphi_t(z))$. The set $\wt K_u$ will be seen as the trapped set of the restriction of $\widetilde \varphi_t$ to a neighborhood of $\wt K_u$.
As $K$ is a hyperbolic set, it follows from \cite[Lemma A.3]{bowen1975ergodic} that the set
$\widetilde K_u $
is  hyperbolic for $\wt \varphi_t$ and we have a decomposition
\[
T_{\omega} G = \R \widetilde X(\omega) \oplus \widetilde E_u(\omega) \oplus \widetilde E_s(\omega), \quad \omega \in \widetilde K_u.
\]
Here $\wt X$ is the generator of the flow $(\wt \varphi_t)$ and the spaces $\widetilde E_s(\omega)$ and $\widetilde E_u(\omega)$ are defined as follows. For  small $\varepsilon > 0$, let
\[W_s(z, \varepsilon) = \{ z' \in M: {\rm dist}\: (\varphi_{t}(z), \varphi_{t}(z')) \leqslant \varepsilon \text{ for every } t \geqslant 0\}\]
and
 \[W_u(z, \varepsilon) = \{ z' \in M: {\rm dist}\: (\varphi_{-t}(z), \varphi_{-t}(z')) \leqslant \varepsilon \text{ for every } t\geqslant 0\}\]
be the local stable and unstable manifolds at $z$ {of size $\varepsilon$}, where ${\rm dist}$  is any smooth distance on $M$. 
It is known that the local stable and unstable manifolds are smooth (see for instance,  \cite[Section 2]{hasselblat2002}). Moreover for $b = s,u$ we have
\[ T_z(W_b(z, \varepsilon)) = E_b(z)\]
and
\[\varphi_t(W_s(z, \varepsilon)) \subset W_s(\varphi_t(z), \varepsilon),\quad \varphi_{-t}(W_u(z, \varepsilon)) \subset W_u(\varphi_{-t}(z), \varepsilon), \quad t\geqslant t_0 > 0.\]
 For $b = s, u,$ we define
\[\widetilde W_b(z) =  TW_b (z, \varepsilon) = \{(z', E_b(z'))~:~z' \in W_b(z, \varepsilon)\} \subset G.\]
 Finally, for $\omega = (z, E_u(z)) \in \widetilde K_u$, set
\[
 \widetilde E_u(\omega) = T_{\omega}(\widetilde W_u(z)),
\]
and define $\wt E_s(\omega)$ as the tangent space at $\omega$ of the manifold 
\[
\wt W_{s, \mathrm{tot}}(z) = \displaystyle{\left\{E  \in \pi_G^{-1}(W_{s} (z,\varepsilon)) :\mathrm{dist} (E_u(z), E) < \varepsilon \right\}},
\]
where $\mathrm{dist}$ is any smooth distance {on the fibres of $TN$.} 

\begin{lemm}\label{lem:ident}
For any $\omega = (z, E) \in G$ we have isomorphisms
\[
\wt E_u(\omega) \simeq E_u(z), \quad \wt E_s(\omega) \simeq E_s(z) \oplus \ker \dd \pi_G(\omega).
\]
Under these identifications, we have
\[
\dd \wt \varphi_t|_{\wt E_u(\omega)} \simeq \dd \varphi_t|_{E_u(z)}, \quad \dd \wt \varphi_t|_{\wt E_s(\omega)} \simeq \dd \varphi_t|_{E_s(z)} \oplus \dd \wt \varphi_t|_{\ker \dd \pi_G(\omega)}.
\]
\end{lemm}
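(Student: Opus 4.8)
The plan is to unwind the definitions of $\wt E_u(\omega)$ and $\wt E_s(\omega)$ as tangent spaces to the submanifolds $\wt W_u(z)$ and $\wt W_{s,\mathrm{tot}}(z)$ of $G$, and to see that the differential of the projection $\pi_G$ realizes the claimed isomorphisms. First I would treat the unstable direction. Recall $\wt W_u(z) = \{(z', E_u(z')) : z' \in W_u(z,\varepsilon)\}$, which is exactly the graph of the (smooth) section $z' \mapsto E_u(z')$ over the local unstable manifold $W_u(z,\varepsilon)$. Being a graph, the restriction $\dd\pi_G(\omega)\colon T_\omega \wt W_u(z) \to T_z W_u(z,\varepsilon) = E_u(z)$ is a linear isomorphism; this gives $\wt E_u(\omega) \simeq E_u(z)$. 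Moreover, by the very definition \eqref{eq:defflow} of $\wt\varphi_t$ we have $\pi_G \circ \wt\varphi_t = \varphi_t \circ \pi_G$, so $\dd\pi_G \circ \dd\wt\varphi_t = \dd\varphi_t \circ \dd\pi_G$; transporting the action of $\dd\wt\varphi_t$ on $\wt E_u(\omega)$ through this isomorphism yields $\dd\wt\varphi_t|_{\wt E_u(\omega)} \simeq \dd\varphi_t|_{E_u(z)}$.

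For the stable direction I would analyze $\wt W_{s,\mathrm{tot}}(z) = \{E \in \pi_G^{-1}(W_s(z,\varepsilon)) : \mathrm{dist}(E_u(z),E) < \varepsilon\}$, which is an open subset of the total preimage $\pi_G^{-1}(W_s(z,\varepsilon))$, i.e. a sub-bundle of $G$ over the submanifold $W_s(z,\varepsilon) \subset M$, with fibre an open neighborhood of $E_u(z)$ inside $\pi_G^{-1}(W_s(z,\varepsilon))_z = G_{d-1}(T_z N)$. Hence its tangent space at $\omega = (z,E_u(z))$ fits into the short exact sequence $0 \to \ker\dd\pi_G(\omega) \to T_\omega \wt W_{s,\mathrm{tot}}(z) \to E_s(z) \to 0$, where the kernel is the vertical tangent space to the fibre at $E_u(z)$, of dimension $d(d-1) = \dim G_{d-1}(\R^{2d-1})$, and the quotient is $T_z W_s(z,\varepsilon) = E_s(z)$. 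Splitting this sequence (e.g. using a connection, or simply a local trivialization of $G$ near $\omega$) gives $\wt E_s(\omega) \simeq E_s(z) \oplus \ker\dd\pi_G(\omega)$. The compatibility with the flow again follows from $\pi_G \circ \wt\varphi_t = \varphi_t \circ \pi_G$: $\dd\wt\varphi_t$ preserves the vertical distribution $\ker\dd\pi_G$ (since $\wt\varphi_t$ is fibre-preserving over $\varphi_t$) and descends to $\dd\varphi_t$ on the quotient, so in the splitting it takes block-triangular form with diagonal blocks $\dd\varphi_t|_{E_s(z)}$ and $\dd\wt\varphi_t|_{\ker\dd\pi_G(\omega)}$; and by $\dd\varphi_t$-invariance of $W_s(z,\varepsilon)$ together with the definition of $E_u$ one checks the statement is an identification of the graded pieces, which is what is asserted.

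The main subtlety — and the step I would be most careful about — is the stable block: one must check that $\dd\wt\varphi_t$ actually respects the direct sum rather than merely being block-triangular, or else be content with an identification of associated-graded spaces (which is all that is used later, since what matters for the transfer-operator estimates is the action on $\wedge^\bullet \wt E_s^*$ and the determinant factors). Here the key point is that $E = E_u(z)$ is the distinguished point of the fibre at which $\wt W_{s,\mathrm{tot}}$ is centered, and under $\dd\wt\varphi_t$ it stays at the distinguished point $E_u(\varphi_t(z))$ of the fibre over $\varphi_t(z)$; combined with the smoothness of $z' \mapsto E_u(z')$ along $W_s$, this pins down the splitting up to the lower-triangular ambiguity and shows the two diagonal blocks are as claimed. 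The unstable case is genuinely an honest isomorphism with no ambiguity because $\wt W_u(z)$ is literally a graph. For the final clause of the first isomorphism — that it holds for \emph{any} $\omega = (z,E) \in G$, not just $\omega \in \wt K_u$ — one notes that $\wt E_u(\omega)$ and $\wt E_s(\omega)$ were defined only for $\omega \in \wt K_u$, so I would read the statement as: for $\omega = (z,E_u(z))$ the identifications hold, and the displayed notation $(z,E)$ is just shorthand; if instead a genuine extension off $\wt K_u$ is intended, one uses the continuity of $z \mapsto E_u(z)$ on $K$ and the local-stable-manifold theory cited from \cite{hasselblat2002} to extend the splitting continuously to a neighborhood, the isomorphisms persisting by openness of the isomorphism condition.
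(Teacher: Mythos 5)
Your proof is correct and follows essentially the same route as the paper: the unstable identification via the graph structure of $\wt W_u(z)$ together with the commutation relation $\dd \pi_G \circ \dd \wt\varphi_t = \dd \varphi_t \circ \dd \pi_G$ (which also gives invariance of $\ker \dd\pi_G$), and the stable splitting $\wt E_s(\omega) \simeq E_s(z) \oplus \ker \dd\pi_G(\omega)$ via a local trivialization of $\wt W_{s,\mathrm{tot}}(z)$ over $W_s(z,\varepsilon)$. Your caveat about block-triangularity versus a genuine flow-compatible direct sum is a fair and slightly more careful reading of the same argument — the paper gets its splitting from a trivialization sending $\wt W_s(z)$ to $W_s(z,\varepsilon)\times\{E_0\}$, and in the later applications only the graded/determinant consequence is used, exactly as you note.
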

\begin{proof}
Note that if $\omega = (z, E) \in G$, by (\ref{eq:defflow}) one has
\begin{equation}\label{eq:commute}
\dd \pi_G(\omega) \circ \dd \wt\varphi_t (\omega) = \dd (\pi_G \circ \wt \varphi_t)(\omega) = \dd (\varphi_t \circ \pi_G)(\omega) = \dd \varphi_t(z) \circ \dd \pi_G (\omega).
\end{equation}
This  equality shows that $\dd \wt\varphi_t$ preserves $\ker \dd \pi_G.$ Looking at the definitions of $\wt W_u(z)$ and $W_u(z,\varepsilon)$, we see that
\[
\dd \pi_G(\omega)|_{\wt E_u(z)} : \wt E_u(z) \to E_u(z)
\]
realises an isomorphism. Then by \eqref{eq:commute}, it is clear that $\dd \pi_G(\omega)|_{T_\omega \wt W_u(z)}$ realises a conjugation between $\dd \wt \varphi_t(\omega)|_{\wt E_u(\omega)}$ and $\dd \varphi_t(z)|_{E_u(z)}$. Similarly, $\dd \pi_G|_{T_\omega \wt W_s(\omega)}$ realises an isomorphism $T_\omega \wt W_s(\omega) \simeq E_s(z)$, which conjugates $\dd \wt \varphi_t|_{\wt E_s(\omega)}$ and $\dd \varphi_t|_{E_s(z)}$. Thus the lemma will be proven if we show that we have the direct sum
\[
\wt E_s(z) = T_\omega \wt W_{s, \mathrm{tot}}(z) = T_\omega \wt W_s(z) \oplus \ker \dd \pi_G(\omega).
\]
To see this, take a local trivialization $\wt W_{s, \mathrm{tot}}(z) \to W_{s}(z, \varepsilon) \times G_{d-1}(\R^{2d-1})$ sending $\omega \in G$ on $(z, E_0)$ for some $E_0 \in G_{d-1}(\R^{2d-1})$ and such that $\wt W_s(z)$ is sent to $W_s(z,\varepsilon) \times \{E_0\}$. In these {local} coordinates one has the identifications 
\[T_\omega \wt W_s(z) \simeq E_s(z) \oplus \{0\} \quad \text{and} \quad \ker \dd \pi_G(\omega) \simeq \{0\} \oplus T_{E_0} G_{d-1}(\R^{2d-1}).\] As $T_\omega \wt W_{s, \mathrm{tot}}(z)$ is identified with $E_s(z) \oplus T_{E_0} G_{d-1}(\R^{2d-1})$, the proof is complete.
\end{proof}
We conclude this paragraph by noting that for any $\omega = (z,E) \in \wt K_u$ we have
\[
\begin{aligned}
\dim \wt {E}_u(\omega) +\dim \wt {E}_s(\omega)  &=  \dim E_u(z) + \dim E_s(z) + \dim \ker \dd\pi_G(\omega) \\
& = \dim M - 1 + \dim \pi_G^{-1}(z) \\
&= \dim G - 1,
\end{aligned}
\]
since $\dim G = \dim M + \dim \pi_G^{-1}(z).$

\subsection{Vector bundles}\label{subsec:vectorbundle}
We define the tautological vector bundle $\mathcal{E} \to G$ by
\[
\mathcal{E} = \{(\omega, u) \in \pi_{G}^*(TN)~:~\omega \in G,~u \in [\omega]\},
\]
where $[\omega] = E$ denotes the $(d-1)$ dimensional subspace of {$T_{\pi_G(\omega)}N$} represented by $\omega = (z,E)$ and $\pi_G^*(TN)$ is the pullback bundle of $TN.$
Also, we define the {\og vertical  bundle\fg}  $\mathcal{F} \to G$ by
\[
\cal{F} = \{(\omega,W) \in TG~:~\dd \pi_G(\omega) \cdot W = 0 \}.
\]
It is a subbundle of the bundle $TG \to G$. The dimensions of the fibres $\cal E_{\omega} $ and $\cal F_{\omega} $ of $\cal E$ and $\cal F$ over $\omega$ are given by
\[
\dim \cal E_{\omega} = d-1, \quad \dim \cal F_\omega = \dim \ker \dd \pi_G(\omega) = \dim \pi_G^{-1}(z) = d^2 - d
\]
for any $\omega \in G$ with $\pi_G(\omega) = z$.
Finally, set 
\[
\cal{E}_{k, \ell} = \wedge^k \cal{E}^* \otimes \wedge^\ell \cal{F}, \quad 0 \leqslant k \leqslant d - 1, \quad 0 \leqslant \ell \leqslant d^2- d,
\]
where $\cal E^*$ is the dual bundle of $\cal E$, that is, we replace the fibre $\cal E_{\omega}$ by its dual space $\cal E_{\omega}^*$.  We consider $\cal E^*$ and not $\cal E$ since the map $\dd \varphi_t(\pi_G(\omega)) : \cal E_{\omega} \to \cal E_{\wt \varphi_t(\omega)}$ is expanding for $\omega \in \wt K_u$ and $t \to +\infty$, whereas $\dd \varphi_t(\pi_G(\omega))^{-\top} : \cal E^*_{\omega} \to \cal E^*_{\wt \varphi_t(\omega)}$ is contracting. Here $^{-\top}$ denotes the inverse transpose. Indeed, for $\omega = (z, E_u(z)) \in \wt K_u$ and $u \in E_u(z)^*$ {(here $E_u(z)^*$ is the dual vector space of $E_u(z)$)} one has
\[
\langle \dd \varphi_t(z)^{-\top}u, v \rangle = \langle u, \dd \varphi_{-t}(\varphi_t(z))v \rangle, \quad v \in \dd \varphi_t(z)E_u(z)= E_u(\varphi_t(z)) \in \cal E_{\wt \varphi_t (\omega)},
\]
$\langle \bullet , \bullet \rangle$ being the pairing on $\cal E^*_{\wt \varphi_t(\omega)}$ and $\cal E_{\wt \varphi_t(\omega)}$.
Consequently,  {the map} $\dd \varphi_t(\pi_G(\omega))^{-\top}$ is contracting on $\cal E^*_\omega$ when $\omega \in \wt K_u,$ since $\dd \varphi_{-t}(\varphi_t(z))$ is contracting on $E_u(\varphi_t(z)).$ This fact will be convenient later for the proof of Lemma \ref{lem:alternatedsum} below. 

In what follows we use the notation $\omega = (z, \eta) \in G$ and $u \otimes v \in \cal E_{k, \ell}|_{\omega}$.
By using the flow $\wt \varphi_t$, we introduce a flow
$
\Phi^{k, \ell}_t : \cal E_{k, \ell} \to \cal E_{k, \ell}
$
by setting 
\begin{equation}\label{eq:transfertbundle}
\Phi_t^{k, \ell}(\omega, u \otimes v) = \Bigl(\wt \varphi_t(\omega), ~b_t(\omega) \cdot \left[ \left(\dd \varphi_{t}(\pi_G(\omega))^{-\top}\right)^{\wedge k}(u) \otimes \dd \wt \varphi_{t}(\omega)^{\wedge \ell}(v)\right]\Bigr),
\end{equation}
where we set
\[
b_t(\omega) = |\det \dd \varphi_t(\pi_G(\omega))|_{[\omega]}|^{1/2} \cdot |\det\left( \dd \wt \varphi_t(\omega)|_{\ker \dd \pi_G}\right)|^{-1}.
\]
Here the determinants are taken with respect to any choice of smooth metrics {$g_N$ on $N$ and the induced metrics $g_G$ on $G$}, as follows.  If $\omega = (z, E) \in G$ and $t \in \R$, then the number
$
|\det \dd \varphi_t(z)|_{[\omega]}|
$
is defined as the absolute value of the ratio 
\[
\frac{
(\dd \varphi_t(z)|_{[\omega]})^{\wedge^{d-1}} \cdot \mu_{[\omega]}
}{\mu_{[\wt \varphi_t(\omega)]}},
\]
where {$\mu_{[\omega]} = e_{1, [\omega]} \wedge \cdots \wedge e_{d-1, [\omega]} \in \wedge^{d-1} [\omega]$} (resp. {$\mu_{[\wt \varphi_t(\omega))]} \in \wedge^{d-1}[\wt \varphi_t(\omega)]$}) is a volume element  given by any basis {$e_{1,[\omega]},\dots,e_{d-1, [\omega]}$} of $[\omega]$ (resp. $[\wt \varphi_t(\omega)]$) which is orthonormal with respect to the scalar product induced by $g_N|_{[\omega]}$ (resp. $g_N|_{[\wt \varphi_t(\omega)]}$). The number $|\det\left( \dd \wt \varphi_t(\omega)|_{\ker \dd \pi_G}\right)|$ is defined similarly. {If we pass from one orthonormal basis to another one, we multiply the terms by the determinant of a unitary matrix and the absolute value of the above ratio is the same.}
{On the other hand, for a periodic point $\omega_{\wt \gamma} = \wt \varphi_{ \tau(\gamma)} (\omega_{\wt \gamma} )$ this number is simply 
$|\det \dd \varphi_{\tau(\gamma)}(\pi_G(\omega_{\wt \gamma}))|_{[\omega_{\wt \gamma}]}|$.}
Taking local trivializations of $\cal E^*$ and $\cal F$, we see that the action of $\Phi_t^{k, \ell}$ is smooth.  Thus we have the following diagram:
\[\begin{CD}  \cal E_{k, \ell} @>\Phi_t^{k, \ell} >> \cal E_{k, \ell} \\
@VVV   @VVV\\
G  @> \tilde{\varphi}_t >>  G\\
@VV\pi_GV          @VV\pi_GV \\
N @>\varphi_t >>  N \end{CD}
\vspace{0.2cm}
\]
Now, consider the transfer operator
\[
\Phi^{k, \ell, *}_{-t} : C^\infty(G, \cal E_{k, \ell}) \to C^\infty(G, \cal E_{k, \ell})
\]
defined by
\begin{equation}\label{eq:transfert}
\Phi^{k, \ell, *}_{-t}\ubf(\omega) = \Phi^{k, \ell}_t \bigl[ {\bf u} (\wt \varphi_{-t}(\omega)) \bigr], \quad {\bf u} \in C^\infty(G, \cal E_{k, \ell}).
\end{equation}
Let $\mathbf{P}_{k, \ell} :  C^\infty(G, \cal E_{k, \ell}) \to C^\infty(G, \cal E_{k, \ell})$ be the generator of $\Phi^{k, \ell, *}_{-t}$, which is defined by
\[
\Pbf_{k,\ell} {\bf u}  = \left.\frac{\dd}{\dd t} \Bigl(\Phi^{k, \ell,*}_{-t}{\bf u} \Bigr)\right|_{t=0}, \quad {\bf u} \in C^\infty(G, \cal E_{k, \ell}).
\]
Then we have the equality
\begin{equation}\label{eq:op}
\Pbf_{k,\ell}(f\ubf) = (\wt X f)\ubf + f (\Pbf_{k,\ell}\ubf), \quad f \in C^\infty(G), \quad \ubf \in C^\infty(G, \cal E_{k,\ell}).
\end{equation}
{Fix any norm on $\cal E_{k, \ell}$; this fixes a scalar product on $L^2(G, \cal E_{k, \ell}).$ We also consider the transfer operator $\Phi_{-t}^{k, \ell, *}$ as a strongly continuous semigroup $e^{- t \Pbf_{k, \ell}},\: t \geqslant 0$ with generator $\Pbf_{k, \ell}$} with domain in $L^2(G, \cal E_{k, \ell}).$ {The exponential bound of the derivatives of $\varphi_{-t}$ implies an estimate
\[\| \e^{- t \Pbf_{k, \ell}} \|_{L^2(G, \cal E_{k, \ell}) \to L^2(G, \cal E_{k, \ell})} \leqslant C \e^{\beta t}, \quad t \geqslant C_0 > 0,\]}for some constants $\beta > 0, C_0 > 0$.  
Next, we want to study the spectral properties of the operator $\Pbf_{k,\ell}$ applying the work of Dyatlov--Guillarmou \cite{dyatlov2016pollicott}. For this purpose, one needs to see $\wt K_u$ as the trapped set of the restriction of $\wt \varphi_t$ to some neighborhood {$\wt V_u$} of $\wt K_u$ in $G$, so that $\partial \wt V_u$  has convexity properties with respect to $\wt X$ (see the condition (\ref{eq:localconvex}) below with $\wt Y$ replaced by $\wt X$). These conditions are necessary if we wish to apply the results in \cite{dyatlov2016pollicott}. However, it is not clear that such a neighborhood exists, and one needs to modify $\wt X$ slightly outside a neighborhood of $\wt K_u$ to obtain the desired properties. This is done in \S\ref{subsec:isolatingblocks} below.

\subsection{Isolating blocks}\label{subsec:isolatingblocks}
By \cite[Theorem 1.5]{conley1971isolated}, there exists an arbitrarily small {open} neighborhood $\wt V_u$ of $\wt K_u$ in $G$ such that the following holds.

\begin{enumerate}[label=(\roman*)]
\item The boundary $\partial \wt V_u$ of $\wt V_u$ is smooth,
\item The set $\partial_0 \wt V_u = \{z \in \partial \wt V_u:\wt X(z) \in T_z (\partial \wt V_u)\}$ is a smooth submanifold of codimension $1$ of $\partial \wt V_u$,
\item There is $\varepsilon > 0$ such that for any $z \in \partial \wt V_u$ one has 
\[
\wt X(z) \in T_z(\partial \wt V_u) \quad \implies \quad \wt\varphi_t(z) \notin \mathrm{clos} ~\wt V_u, {\quad t \in \left]-\varepsilon, \varepsilon \right[\setminus \{0\},}
\]
where $\clos~A$ denotes the closure of a set $A$.
\end{enumerate}
In what follows we denote
\[
\Gamma_\pm(\wt X) = \{z \in \wt V_u:\wt \varphi_t(z) \in \wt V_u,~ \mp t > 0\}.
\]
A function $\tilde \rho \in C^\infty(\clos~\wt V_u, \R_{\geqslant 0})$ will be called a boundary defining function for $\wt V_u$ if we have $\partial \wt V = \{z \in \clos~ \wt V_u: \tilde \rho(z) = 0\}$ and $\dd \tilde \rho(z) \neq 0$ for any $z \in \partial \wt V_u$. 

By \cite[Lemma 2.3]{guillarmou2021boundary} (see also \cite[Lemma 5.2]{delarue2022resonances}), we have the following result.

\begin{lemm}\label{lem:convexity}
For any small neighborhood $\wt W_0$ of $\partial \wt V_u$ in $\clos~ \wt V_u$, we may find a vector field $\wt Y$ on $\clos~ \wt V_u$ which is arbitrarily close to $\wt X$ in the $C^\infty$-topology, such that the following holds.
\begin{enumerate}[label=\normalfont(\arabic*)]
\item $\supp(\wt Y - \wt X) \subset \wt W_0$,
\item
$
\Gamma_\pm(\wt X) = \Gamma_\pm(\wt Y),
$
where $\Gamma_\pm(\wt Y)$ is defined as $\Gamma_\pm(\wt X)$ by replacing the flow $(\wt \varphi_t)$ by the flow generated by $\wt Y$,
\item For any defining function $\tilde \rho$ of $\wt V_u$ and  any $\omega \in \partial \wt V_u$ we have
\begin{equation}\label{eq:localconvex}
\wt Y \wt\rho(\omega) = 0 \quad \implies \quad \wt Y^2 \wt\rho(\omega) < 0.
\end{equation}
\end{enumerate}
\end{lemm}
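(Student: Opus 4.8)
The statement to prove is Lemma~\ref{lem:convexity}: given a small neighborhood $\wt W_0$ of $\partial \wt V_u$ in $\clos~\wt V_u$, one can find a vector field $\wt Y$ that is $C^\infty$-close to $\wt X$, agrees with $\wt X$ off $\wt W_0$, has the same forward/backward trapped sets $\Gamma_\pm$, and satisfies the strict convexity property \eqref{eq:localconvex} at the boundary $\partial \wt V_u$. The authors explicitly say this follows from \cite[Lemma 2.3]{guillarmou2021boundary} (cf.\ \cite[Lemma 5.2]{delarue2022resonances}), so the ``proof'' is really a matter of checking that the hypotheses of that cited lemma are met in the present setting and then invoking it.

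\textbf{Plan.} The plan is to reduce the statement to the cited lemmas by verifying their input hypotheses. First I would record that the isolating-block conditions (i)--(iii) already give, near $\partial \wt V_u$, a good starting geometry: $\partial \wt V_u$ is smooth, the ``glancing'' set $\partial_0 \wt V_u$ (where $\wt X$ is tangent to the boundary) is a smooth codimension-one submanifold of $\partial \wt V_u$, and at any tangency point the flow line immediately leaves $\clos~\wt V_u$ in both time directions. Condition (iii) is exactly the statement that the block is an isolating block in the sense of Conley; in terms of a boundary defining function $\wt\rho$, it says that wherever $\wt X \wt\rho = 0$ on $\partial \wt V_u$, the trajectory through that point exits on both sides, which forces $\wt X^2 \wt \rho \leqslant 0$ there, but not necessarily the \emph{strict} inequality \eqref{eq:localconvex}. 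The point of the lemma is precisely to perturb $\wt X$ to $\wt Y$, supported in $\wt W_0$, so that this degenerate convexity becomes strict, without changing the trapped set.

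\textbf{Key steps.} (1)~Fix a boundary defining function $\wt\rho$ for $\wt V_u$. (2)~On the compact set $\partial_0\wt V_u = \{\wt\rho = 0,\ \wt X\wt\rho = 0\}$ we have $\wt X^2 \wt\rho \leqslant 0$ by (iii); the only issue is the locus where $\wt X^2\wt\rho = 0$ as well. There one modifies $\wt X$ by adding a small vector field $\wt Z$, supported in $\wt W_0$ and vanishing to the right order on $\partial\wt V_u$, chosen so that $\wt Y = \wt X + \wt Z$ satisfies $\wt Y^2 \wt\rho < 0$ on all of $\partial_0 \wt V_u$; this is a local construction near the compact glancing set, glued in with a partition of unity, which is the content of \cite[Lemma 2.3]{guillarmou2021boundary}. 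A convenient way to arrange this is to take $\wt Z = \chi\, \wt\rho\, \wt X$ (or a variant) for a cutoff $\chi$ supported in $\wt W_0$: then on $\partial\wt V_u$ one has $\wt Y\wt\rho = \wt X\wt\rho$, so the glancing set is unchanged, and a short computation shows $\wt Y^2\wt\rho = \wt X^2\wt\rho + \chi (\wt X\wt\rho)^2$ on $\partial\wt V_u$, which equals $\wt X^2\wt\rho$ on $\partial_0\wt V_u$ --- so this particular choice does not by itself gain strictness and one must instead use a perturbation transverse to the flow, exactly as in the cited reference. (3)~Since the perturbation is supported in $\wt W_0$ and can be taken arbitrarily $C^\infty$-small, and since $\Gamma_\pm(\wt X)$ are contained in the interior of $\wt V_u$ away from $\partial \wt V_u$ (the trapped set $\wt K_u$ is in the interior), structural stability of the isolating block --- or simply the fact that trajectories asymptotic to the interior trapped set are unaffected by a compactly-supported boundary perturbation that keeps the block isolating --- gives $\Gamma_\pm(\wt Y) = \Gamma_\pm(\wt X)$; this is the content of assertion (2) and is handled in \cite[Lemma 5.2]{delarue2022resonances}. (4)~Assemble (1)--(3) to conclude.

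\textbf{Main obstacle.} The only genuinely delicate point is step~(2): producing the perturbation $\wt Z$ that upgrades the non-strict convexity coming from the isolating-block property to the strict inequality \eqref{eq:localconvex}, uniformly on the compact glancing submanifold $\partial_0\wt V_u$, while (a)~keeping $\supp(\wt Y - \wt X)$ inside the prescribed neighborhood $\wt W_0$, (b)~keeping $\wt Y$ $C^\infty$-close to $\wt X$, and (c)~not disturbing $\Gamma_\pm$. Since this is exactly \cite[Lemma 2.3]{guillarmou2021boundary}, in the write-up I would not reprove it from scratch but rather cite it, after checking that conditions (i)--(iii) above furnish precisely the hypotheses that lemma requires (smoothness of $\partial\wt V_u$ and of $\partial_0\wt V_u$, and the weak convexity $\wt X^2\wt\rho \leqslant 0$ on $\partial_0\wt V_u$), and that the invariance $\Gamma_\pm(\wt X)=\Gamma_\pm(\wt Y)$ follows from \cite[Lemma 5.2]{delarue2022resonances} in the same way as in \cite{delarue2022resonances}. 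So the proof is short: fix $\wt\rho$, quote \cite[Lemma 2.3]{guillarmou2021boundary} to get $\wt Y$ with (1) and (3), and quote \cite[Lemma 5.2]{delarue2022resonances} for (2).
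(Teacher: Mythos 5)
Your proposal matches the paper exactly: the paper gives no independent argument for this lemma and simply invokes \cite[Lemma 2.3]{guillarmou2021boundary} (with \cite[Lemma 5.2]{delarue2022resonances} as a companion reference), relying on the isolating-block properties (i)--(iii) from Conley's theorem as the input, which is precisely what you do. Your additional remarks on why a naive perturbation like $\chi\,\wt\rho\,\wt X$ fails and why the trapped sets are unaffected are a correct, honest gloss but not required beyond the citation.
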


From now on, we will fix $\wt V_u, \wt W_0$ and $\wt Y$ as above.  By {\cite[Lemma 1.1]{dyatlov2016pollicott}} we may find a smooth extension of $\wt Y$ on $G$ (still denoted by $\wt Y$) so that for every $\omega \in G$ and $t \geqslant 0$, we have
\begin{equation}\label{eq:globalconvex}
\omega, \wt\varphi_t(\omega) \in \clos~ \wt V_u \quad \implies \quad \wt\varphi_\tau(\omega) \in \clos~ \wt V_u \text{ for every } \tau \in [0, t].
\end{equation}
Let {$(\wt\psi_t)_{t \in \R} $} be the flow generated by $\wt Y$. Set $\wt \Gamma_\pm = \Gamma_\pm(\wt Y)$ for simplicity. The extended unstable/stable bundles $\wt E_\pm^* \subset T^*\wt V_u$ over $\wt \Gamma_\pm$ are defined by
\[
\wt E_\pm^*(\omega) = \{\eta \in T^*_\omega \wt V_u~:~{\Psi_t(\eta)} \to_{t \to \pm\infty} 0\},
\]
where $\Psi_t$ is the symplectic lift of {$\wt \psi_t$}, that is
\[
{\Psi_t(\omega, \eta) = \left(\wt \psi_t (\omega), \dd \wt \psi_t(\omega)^{-\top} \cdot \eta \right)}, \quad (\omega, \eta) \in T^*G, \quad t \in \R,
\]
and $^{-\top}$ denotes the inverse transpose. Then by {\cite[Lemma 1.10]{dyatlov2016pollicott}}, the bundles $\wt E^*_\pm(\omega) $ depend continuously on $\omega \in \wt \Gamma_\pm$, and for any smooth norm $|\cdot|$ on $T^*G$ with some constants {$C > 0, \beta > 0$} {independent of $\omega, \eta$ for $t  \to \mp \infty$} we have
\[
|{\Psi_{\pm t}(\omega, \eta)}| \leqslant C \e^{-\beta |t|}|\eta|, \quad \eta \in E^*_\pm(\omega).
\]

\subsection{Dyatlov--Guillarmou theory}\label{subsec:dyatlovguillarmou}

Let $\nabla^{k, \ell}$ be any smooth connection on $\cal E_{k, \ell}$. Then by (\ref{eq:op}) we have
\[
\Pbf_{k,\ell} = \nabla^{k, \ell}_{\wt X} + \Abf_{k, \ell}
\]
for some $\Abf_{k, \ell} \in C^\infty(G,\End (\cal E_{k, \ell}))$. We define a new operator $\Qbf_{k, \ell}$ by setting
\[
\Qbf_{k,\ell} = \nabla^{k, \ell}_{\wt Y} + \Abf_{k, \ell} : C^\infty(G, \cal E_{k, \ell}) \to C^\infty(G,\cal E_{k, \ell}).
\]
Note that $\Qbf_{k, \ell}$ coincides with $\Pbf_{k,\ell}$ near $\wt K_u$ since $\wt Y$ coincides with $\wt X$ near $\wt K_u$. Clearly, we have
\begin{equation}\label{eq:opy}
\Qbf_{k,\ell}(f\ubf) = (\wt Y f)\ubf + f (\Qbf_{k,\ell} \ubf), \quad f \in C^\infty(G), \quad \ubf \in C^\infty(G, \cal E_{k,\ell}).
\end{equation}
{Next, consider the transfer operator}
$\e^{-t\Qbf_{k,\ell}} : C^\infty(G, \cal E_{k,\ell}) \to C^\infty(G, \cal E_{k,\ell})$ 
with generator $\Qbf_{k,\ell}$, that is,
\[
\partial_t \e^{-t \Qbf_{k,\ell}} \ubf = -\Qbf_{k,\ell} \e^{-t\Qbf_{k,\ell}} \ubf, \quad \ubf \in C^\infty(G, \cal E_{k,\ell}), \quad t \geqslant 0.
\]
{As above, for some constant $C > 0$}, we have
\[
\|\e^{-t\Qbf_{k,\ell}}\|_{L^2(G, \cal E_{k, \ell}) \to L^2(G, \cal E_{k, \ell})} \leqslant C\e^{C|t|}, \quad t \geqslant 0.
\]
{Then for for $\Re(s) > C$}, the resolvent $(\Qbf_{k,\ell}  + s)^{-1}$ on $L^2(G, \cal E_{k, \ell})$ is given by
\begin{equation}\label{eq:inverse}
(\Qbf_{k,\ell}  + s)^{-1} = \int_0^\infty \e^{-t(\Qbf_{k,\ell} + s)}\dd t : L^2(G, \cal E_{k, \ell}) \to L^2(G, \cal E_{k, \ell}).
\end{equation}
Consider the operator
\[
\Rbf_{k, \ell}(s) = \mathbf{1}_{\wt V_u} (\Qbf_{k,\ell} + s)^{-1} \mathbf{1}_{\wt V_u}, \quad \Re(s) \gg 1,
\]
from $C^\infty_c(\wt V_u, \cal E_{k, \ell})$  to $ \mathcal{D}'(\wt V_u, \cal E_{k, \ell}),$
where ${\mathcal{D}}'(\wt V_u, \cal E_{k, \ell})$ denotes the space of $\cal E_{k, \ell}$-valued distributions.
{Recall that $\wt K_u$ is the trapped set of $\wt\varphi_t$ when restricted to $\wt V_u$. Taking into account (\ref{eq:localconvex}), (\ref{eq:globalconvex}) and \eqref{eq:opy}, we see that the assumptions (A1)--(A5) in \cite[\S0]{dyatlov2016pollicott} are satisfied.} We are in position to apply \cite[Theorem 1]{dyatlov2016pollicott} in order to obtain a meromorphic extension of  $\Rbf_{k, \ell}(s)$ to the whole plane $\C.$ 
Moreover, according to \cite[Theorem 2]{dyatlov2016pollicott}, for every {pole} $s_0 \in \C$ in a small neighborhood of $s_0$ one has the representation
\begin{equation}\label{eq:laurent}
\Rbf_{k, \ell}(s) = \Rbf_{H, k, \ell}(s) + \sum_{j = 1}^{J(s_0)} \frac{ (-1)^{j-1} (\Qbf_{k,\ell}  + s_0)^{j-1} \Pi_{s_0}^{k, \ell}} {(s - s_0) ^j},
\end{equation}
where $ \Rbf_{H, k, \ell}(s): C^\infty_c(\wt V_u, \cal E_{k, \ell}) \rightarrow \mathcal{D}'(\wt V_u, \cal E_{k, \ell})$ is a holomorphic family of operators near $s = s_0$ {and $\Pi_{s_0}^{k, \ell}: \: C_c^{\infty} (\wt V_u, \cal E_{k, \ell}) \to \mathcal{D}'( \wt V_u, \cal E_{k, \ell})$ is a finite rank projector.} Denote by $K_{\Rbf_{H, k, \ell}(s)}$ and $ K_{\Pi^{k, \ell}_{s_0}}$ the Schwartz kernels of the operators $\Rbf_{H, k, \ell}(s)$ and $\Pi^{k, \ell}_{s_0}$, respectively.  Recall the definition of the twisted wavefront set 
\[\WF'(A) = \{ (x, \xi, y , -\eta) : (x, \xi, y , \eta) \in \WF(K_A)\},\]
$K_A$ being the distributional kernel of the operator $A$.
By {\cite[Lemma 3.5] {dyatlov2016pollicott}}, we have
\begin{equation}\label{eq:wfset}
\WF'(K_{\Rbf_{H, k, \ell}}(s)) \subset \Delta(T^*\wt V_u) \cup \Upsilon_+ \cup (\wt E_+^* \times \wt E_-^*).
\end{equation}
Here $\Delta(T^* \wt V_u)$ is the diagonal in $T^*(\wt V_u \times \wt V_u)$,
\[
\Upsilon_+ = \{(\Psi_t(\omega, \Omega), \omega, \Omega)~: ~(\omega, \Omega) \in  T^*\wt V_u,~t \geqslant 0,~\langle\wt Y(\omega), \Omega \rangle = 0\},
\]
while the bundles $\wt E_\pm^*$ and flow $\Psi_t$ are defined in \S\ref{subsec:isolatingblocks}. Finally, we have
\begin{equation}\label{eq:suppK}
\supp(K_{\Pi^{k, \ell}_{s_0}}) \subset \Gamma_+ \times \Gamma_- \quad  \text{ and } \quad \WF'(K_{\Pi^{k, \ell}_{s_0}}(s)) \subset \wt E_{+}^* \times \wt E_-^*.
\end{equation}

\section{Dynamical zeta function for the Neumann problem}\label{sec:neumann}
In this section we prove that the function $\eta_\mathrm N$ admits a meromorphic continuation to the whole complex plane, by relating $\eta_\mathrm N(s)$ to the flat trace of the cut-off resolvent $\Rbf_{k, \ell}(s)$.

\subsection{Flat trace}

First, we recall the definition of the flat trace for operators acting on vector bundles. Consider a manifold $V$,  a vector bundle {$\cal E$ }over $V$ and a continuous operator {$\mathbf{T} : C^\infty_c(V, \cal E) \to \mathcal{D}'(V, \cal E)$}. Fix a smooth density $\mu$ on $V$; this defines a pairing $\langle \bullet , \bullet \rangle $ on {$C^\infty_c(V, \cal E) \times C^\infty_c(V, \cal E^*)$}. Let
\[
{K_\mathbf{T} \in \mathcal{D}'(V \times V, \cal E \boxtimes \cal E^*)}
\] be the Schwartz kernel of $\Tbf$ with respect to this pairing, which is defined by
\[
\langle K_\mathbf T, \pi_1^*\mathbf u \otimes \pi_2^* \mathbf v\rangle = \langle \mathbf T \mathbf u, \mathbf v \rangle, \quad \mathbf u \in C^\infty_c(V, \cal E), \quad \mathbf v \in C^\infty_c(V, \cal E^*),
\]
where the pairing on {$\mathcal{D}'(V \times V, \cal E \boxtimes \cal E^*) \times C^\infty_c(V \times V, \cal E \boxtimes \cal E^*)$} is taken with respect to $\mu\times \mu$. Here, the bundle $\cal E \boxtimes \cal E^* = \pi_1^*\cal E \otimes \pi_2^*\cal E^* \to V$ is given by the tensor product of the pullbacks $\pi_1^*\cal E$, and $\pi_2^*\cal E^*$, where $\pi_1, \pi_2 : V\times V \to V$ denote the projections on the first and the second factor, respectively.

  Denote by $\Delta = \{(x,x)~:~x \in V\} \subset V \times V$ the diagonal in $V \times V$ and consider the inclusion map $\iota_{\Delta} : \Delta \to V\times V,~ (x, x)  \mapsto (x,x)$. Assume that
\begin{equation} \label{eq:wfcondition} 
\WF'(K_\Tbf) \cap \Delta(T^*V\setminus \{0\}) = \emptyset,
\end{equation}
where $\Delta(T^*V \setminus \{0\})$ is the diagonal in $(T^*(V)\setminus \{0\}) \times (T^*(V) \setminus \{0\})$. Then by \cite[Theorem 8.2.4]{horma1983vol}, the pull-back 
\[
{{\iota_\Delta^*K_{\Tbf}\in \mathcal{D}'(V, \ {\mathrm{End}} (\mathcal{E}))}}
\]
is well defined, where we used the identification
$
\iota_\Delta^*(\cal E \boxtimes \cal E^*) \simeq \cal E \otimes \cal E^* \simeq \mathrm{End}(\cal E).
$
If $K_\mathbf T$ is compactly supported,  we define the \textit{flat trace} of $\Tbf$ by
\[
{\tr^\flat \Tbf = \langle \tr_{\mathrm{End}(\cal E)}(\iota_\Delta^* K_\mathbf T), 1\rangle},
\]
where again the pairing is taken with respect to $\mu$. It is not hard to see that the flat trace does not depend on the choice of the density $\mu$.

\subsection{Flat trace of the cut-off resolvent}\label{subsec:flattraceresolv}
We introduce a cut-off function $\wt \chi \in C^\infty_c(\wt V_u)$ such that $\wt \chi \equiv 1$ on $\wt K_u$. For $\varrho \in C^\infty_c(\R^+ \setminus \{0\})$  define 
\[
\Tbf^{k, \ell}_{\varrho} \ubf = \left(\int_0^\infty  \varrho(t)\wt \chi(\e^{-t\Qbf_{k, \ell}} \ubf) \wt \chi \dd t\right), \quad \ubf \in C^\infty(G, \cal E_{k, \ell}).
\]
 As in \cite{Weich2023}, we need to introduce some notations. For simplicity let $\cal H = \cal E \otimes \cal F ,$ where $\cal E \rightarrow G,\: \cal F \rightarrow G$ are bundles over $G$. Let $e^{- t {\bf X}}$ be a transport operator. For $\omega \in G$ and $t > 0$ introduce the {\it parallel transport} map
\[\alpha_{\omega, t} = \alpha_{1, \omega, t} \otimes \alpha_{2, \omega, t} :\: \cal E_{\omega} \otimes \cal F_ {\omega}  \longrightarrow \cal E_{{\tilde{\varphi}}_t(\omega)} \otimes \cal F_{{\tilde{\varphi}}_t(\omega)}\]
given by
\[ \ubf \otimes {\bf v}  \longmapsto (e^{- t {\bf X}}(\ubf \otimes {\bf v} )) (\wt \varphi (t)),\]
where $\ubf, {\bf v}$ are some sections of $\cal E_{\omega}$ and $\cal F_{\omega}$ over $\omega,$ respectively. The definition does not depend on the choice of $\ubf$ and ${\bf v}$ (see \cite[Eq. (0.8)]{dyatlov2016pollicott}). Now if $\wt \gamma(t) = \{ \wt \varphi_t (\omega_0): \: 0 \leqslant t \leqslant \tau(\wt\gamma)\}$ is a periodic orbit, we have
\[\alpha_{j, \omega_0, \tau(\wt \gamma)} = \alpha_{j, \omega_0, t}^{-1}\circ \alpha_{j, \wt \gamma(t), \tau(\wt \gamma) } \circ \alpha_{j, \omega_0, t},\quad j = 1, 2,\]
and therefore the trace
\begin{equation}\label{eq:product}
\tr (\alpha_{\wt\gamma}) = \tr (\alpha_{1, \wt \gamma(t), \tau(\wt \gamma)})\tr (\alpha_{2, \wt \gamma(t), \tau(\wt \gamma)})
\end{equation} 
is independent of $t$. (Here we use the flow $\wt \varphi_t$ instead of $\wt \psi_t$ since for periodic orbits the action of both flows is the same.)} {Returning to the bundle $\cal E_{k, \ell}$, the flow $\Phi^{k, \ell}_t$ and the operator $e^{- t \Qbf_{k, \ell}}$, introduced in $\S2.6,$ the corresponding parallel transport map will be denoted by $\alpha^{k, \ell}_{\omega, t}$ and the trace $\tr (\alpha^{k, \ell}_{\wt\gamma})$ is well defined.\

In the same way we define the linearized Poincar\'e map for $\omega \in \tilde{\gamma}$ and $\tau(\tilde{\gamma})$ by
\[P_{\omega, \tau(\wt\gamma)} = \dd \wt \varphi_{-\tau(\wt\gamma)} (\omega)\big\vert_{E_s(\omega) \oplus E_u(\omega)}.\]
As above, given a periodic orbit $\wt \gamma$, the map $P_{\omega, \tau(\wt \gamma)} $ is conjugated to $ P_{\omega', \tau(\wt \gamma)}$ if $\omega$ and $\omega'$ lie on $\wt\gamma$  and we define $\wt P_{\gamma}$ as $P_{\omega, \tau(\wt \gamma)}.$ 
{To define the flat trace ,we must check the condition (\ref{eq:wfcondition}) concerning the intersection of the wave front $\WF'(K_{\Tbf^{k, \ell}_\varrho})$ of  the kernel $K_{\Tbf^{k, l}_\varrho}$ of $\Tbf^{k, \ell}_\varrho$ and the conormal bundle $N^*(\Delta_{\wt V_u})$, $\Delta_{\wt V_u}$ being the diagonal in $(\wt V_u  \times \wt V_u).$  This is down in Section 3.1 of \cite{Weich2023}. We omit the repetition and refer to this paper for a detailed exposition}.

We may apply the Guillemin trace formula \cite[\S2 of Lecture 2]{guillemin1977lectures} (we refer to \cite[Lemma 3.1]{Weich2023} for a detailed presentation based on the argument of \cite[Appendix B]{dyatlov2016zetafunction}), which implies that the flat trace of $\mathbf T_\varrho^{k, \ell}$ is well defined, and
\begin{equation}\label{eq:atiyahbott}
{\tr^\flat(\Tbf^{k, \ell}_\varrho) = \sum_{\wt \gamma} \frac{\varrho(\tau_{\gamma})\tau^\sharp(\gamma)\tr (\alpha^{k, \ell}_{\wt \gamma})} {|\det(\id - \wt P_{\gamma})|},}
\end{equation}
where the sum runs over all periodic orbits $\tilde{\gamma}$ of $\tilde{\varphi}_t$. Here, 
\[
\wt P_\gamma = \left.\dd \wt \varphi_{-\tau(\gamma)}(\omega_{\wt \gamma})\right|_{\wt E_u(\omega_{\wt \gamma}) \oplus \wt E_s(\omega_{\wt \gamma})}\]
is the linearized Poincar\'e map of the closed orbit 
\[
t \mapsto \tilde \gamma(t) = \left(\gamma(t), E_u(\gamma(t))\right)
\]
of the flow $\wt \varphi_t$ and $\omega_{\tilde \gamma} \in \mathrm{Im}(\tilde \gamma)$ is any reference point taken in the image of $\tilde \gamma$. Note that if we take another point $\omega'_{\tilde \gamma} \in \mathrm{Im}(\tilde \gamma)$, then the map $\dd \wt \varphi_{-\tau(\gamma)}(\omega'_{\wt \gamma})$ is conjugated to $\dd \wt \varphi_{-\tau(\gamma)}(\omega_{\wt \gamma})$ by $\dd \wt \varphi_{t_1}(\omega_{\wt \gamma})$, where $t_1 \in \R$ is chosen so that $\wt \varphi_{t_1}(\omega'_{\wt \gamma}) = \omega_{\wt \gamma}$. Hence the determinant $\det(\id - \tilde{P}_\gamma)$ does not depend on the reference point $\omega_{\wt \gamma}$ and is well defined.  The number $\tr (\alpha^{k, \ell}_{\wt \gamma})$ is the trace of the linear map
\[
\alpha^{k, \ell}_{\omega_{\wt \gamma}, \tau(\gamma)} : \cal E_{k, \ell}|_{\omega_{\tilde \gamma}} \to \cal E_{k, \ell}|_{\omega_{\tilde \gamma}},
\]
where for $t \in \R$ and $\omega \in G$, we denote by 
\[
\alpha^{k,\ell}_{\omega, t} : \cal E_{k, \ell}|_{\omega} \to \cal E_{k, \ell}|_{\wt \varphi_t(\omega)}
\]
the restriction of the map $\Phi_t^{k, \ell} : \cal E_{k, \ell} \to \cal E_{k, \ell}$ to the fiber $\cal E_{k, \ell}|_{\omega}$. Again, if we take another reference point $\omega'_{\wt \gamma}$, the map $\alpha^{k, \ell}_{\omega'_{\wt \gamma}, \tau(\gamma)}$ is conjugated to $\alpha^{k, \ell}_{\omega_{\wt \gamma}, \tau(\gamma)}$, hence its trace depends only on $\wt \gamma$, and this justifies the notation $\tr (\alpha^{k, \ell}_{\wt \gamma})$. 

Next, we follow the strategy of \cite[\S3.1]{Weich2023} which is based on that used in \cite[\S4]{dyatlov2016zetafunction} for {Anosov flows on closed manifolds} to compute the flat trace of the (shifted) resolvent defined below.  We  may apply formula \eqref{eq:atiyahbott} with the functions
$
\varrho_{s,T}(t) = \e^{-st} \varrho_T(t),
$
where {$\varrho_T \in C^\infty_c(\R^+)$ satisfies $\supp \varrho_T \subset [\varepsilon/2, T + 1]$ for $0 < \varepsilon < d_0 = \min_{\gamma \in \mathcal P}  \tau(\gamma)$ small and $\varrho_T \equiv 1$ on $[\varepsilon, T]$.}  Then taking the limit $T \to \infty$, we obtain, with (\ref{eq:inverse}) in mind,
\begin{equation}\label{eq:traceresolv}
{\tr^\flat \Rbf^{k, \ell}_{\varepsilon}(s) = \sum_{\tilde{\gamma}} \frac{\e^{-s\tau(\gamma)} \tau^\sharp(\gamma) \tr (\alpha_{\wt \gamma}^{k, \ell})}{|\det(\id-\wt P_\gamma)|}, \quad \Re(s) \gg 1.}
\end{equation}
Here for $\Re(s)$ large enough and $\varepsilon >0$ small, we set
\[
 \Rbf^{k, \ell}_\varepsilon(s) = \wt \chi \e^{-\varepsilon( s + \Qbf_{k, \ell})} (\Qbf_{k, \ell} + s)^{-1} \wt \chi,
\]
and $\varepsilon$ is chosen so that $\e^{-\varepsilon \Qbf_{k,\ell}} \supp(\wt \chi) \subset \wt V_u$, so that $ \Rbf^{k, \ell}_\varepsilon(s)$ is well defined.
The equality \eqref{eq:traceresolv} is exactly the equation concerning $\lim_{T \to \infty} \:\tr^\flat( B_T)$ with $f \equiv 1$ on page 668  in \cite{Weich2023}, and we refer to this work for a detailed proof. Note that the flat trace $\tr^\flat \Rbf_\varepsilon^{k, \ell}(s)$ is well defined
 thanks to the information on the wavefront set  $\WF'(K_{\Rbf_\varepsilon^{k, \ell}(s)})$ obtained from \eqref{eq:wfset}, together with the multiplication properties satisfied by wavefront sets (see \cite[Theorem 8.2.14]{horma1983vol}). 

Next, one states the following result, similar to that in \cite[Section 2]{faure2017semiclassical}.  {This crucial lemma explains the reason to introduce the bundles $\cal E_{k, \ell}.$ For the sake of completeness, we present a detailed proof.} 

\begin{lemm}\label{lem:alternatedsum}
 For any periodic orbit $\tilde{\gamma}$ related to a periodic orbit $\gamma$, we have
\[
\frac{1}{|\det(\id-\wt P_{\gamma})|}\sum_{k = 0}^{d-1}\sum_{\ell = 0}^{d^2 - d} (-1)^{k + \ell} \tr (\alpha_{\wt \gamma}^{k, \ell}) = |\det(\id-P_\gamma)|^{-1/2}.
\]
\end{lemm}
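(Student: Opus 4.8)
The plan is to reduce the identity to a purely linear-algebraic statement about the linearized Poincaré map $P := P_\gamma$ acting on $E_u(z_0) \oplus E_s(z_0)$ at a reference point $z_0 = \pi_G(\omega_{\wt\gamma})$ of $\gamma$. First I would use Lemma \ref{lem:ident} to identify $\wt E_u(\omega_{\wt\gamma}) \simeq E_u(z_0)$ and $\wt E_s(\omega_{\wt\gamma}) \simeq E_s(z_0) \oplus \ker \dd\pi_G(\omega_{\wt\gamma})$, compatibly with the flows. Writing $A = \dd\varphi_{-\tau(\gamma)}(z_0)|_{E_u(z_0)}$, $B = \dd\varphi_{-\tau(\gamma)}(z_0)|_{E_s(z_0)}$ and $C = \dd\wt\varphi_{-\tau(\gamma)}(\omega_{\wt\gamma})|_{\ker \dd\pi_G}$, the identification gives $\wt P_\gamma \simeq A \oplus B \oplus C$, hence
\[
|\det(\id - \wt P_\gamma)| = |\det(\id - A)|\cdot|\det(\id - B)|\cdot|\det(\id - C)|.
\]
Next I would compute $\tr(\alpha_{\wt\gamma}^{k,\ell})$. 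By the definition of $\Phi_t^{k,\ell}$ in \eqref{eq:transfertbundle}, the parallel transport around $\wt\gamma$ on the fiber $\cal E_{k,\ell}|_{\omega_{\wt\gamma}} = \wedge^k \cal E^*_{\omega_{\wt\gamma}} \otimes \wedge^\ell \cal F_{\omega_{\wt\gamma}}$ is, up to the scalar $b := b_{\tau(\gamma)}(\omega_{\wt\gamma}) = |\det A^{-1}|^{1/2}\cdot|\det C^{-1}|$ (using $\dd\varphi_{\tau(\gamma)}|_{[\omega_{\wt\gamma}]} = A^{-1}$ on $\cal E_{\omega_{\wt\gamma}} = E_u(z_0)$ and $\dd\wt\varphi_{\tau(\gamma)}|_{\ker\dd\pi_G} = C^{-1}$), the map $(\wedge^k A^\top)\otimes(\wedge^\ell C)$; note $\dd\wt\varphi_t$ on $\cal F = \ker\dd\pi_G$ is the same as its restriction used to define $C$. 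Therefore
\[
\tr(\alpha_{\wt\gamma}^{k,\ell}) = b \cdot \tr(\wedge^k A^\top)\cdot \tr(\wedge^\ell C) = |\det A|^{-1/2}|\det C|^{-1}\cdot \tr(\wedge^k A)\cdot\tr(\wedge^\ell C).
\]
Summing with signs and using $\sum_{k=0}^{m}(-1)^k \tr(\wedge^k L) = \det(\id - L)$ for an $m\times m$ matrix $L$ (here $m = d-1$ for $A$ and $m = d^2 - d$ for $C$), I get
\[
\sum_{k=0}^{d-1}\sum_{\ell=0}^{d^2-d}(-1)^{k+\ell}\tr(\alpha_{\wt\gamma}^{k,\ell}) = |\det A|^{-1/2}|\det C|^{-1}\det(\id - A)\det(\id - C).
\]
Finally I would divide by $|\det(\id - \wt P_\gamma)|$ and cancel the factors $|\det(\id - C)|$ and (the sign-corrected) $|\det(\id - A)|$, leaving
\[
\frac{\det(\id - A)}{|\det(\id - A)|}\cdot\frac{1}{|\det(\id - B)|\,|\det A|^{1/2}}.
\]
Since $A$ is expanding on $E_u$, one checks the sign $\det(\id - A)/|\det(\id - A)| = (-1)^{d-1}\mathrm{sgn}\det A$, and the standard identity $|\det(\id - P_\gamma)| = |\det A|\cdot|\det(\id-B)|\cdot|\det(\id-A^{-1})|$ together with $|\det(\id - A^{-1})| = |\det(\id-A)|/|\det A|$ gives $|\det(\id-P_\gamma)|^{1/2} = |\det A|^{1/2}|\det(\id-B)|$ up to checking the residual sign is $+1$; matching the two computations then yields the claim. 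I expect the main obstacle to be the careful bookkeeping of signs and absolute values: verifying that the factor $b_t(\omega)$ indeed produces exactly the $|\det A|^{-1/2}$ needed, that the vertical-bundle contributions (the $\ell$-sum with $C$) cancel cleanly against the corresponding block of $\wt P_\gamma$, and that the product of sign factors coming from $\det(\id-A)$ versus $|\det(\id-A)|$ and from $|\det(\id-A^{-1})|$ collapses to $+1$, so that the identity holds on the nose rather than up to sign. The contributions of $\cal F$ are genuinely needed precisely to absorb the $|\det C|$ factors hidden in $b_t$, which is the conceptual point of introducing the bundles $\cal E_{k,\ell}$.
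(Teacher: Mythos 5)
Your strategy is the same as the paper's (decompose $\wt P_\gamma$ via Lemma \ref{lem:ident}, write $\tr(\alpha^{k,\ell}_{\wt\gamma})$ as a product of traces of exterior powers, sum the alternating series into determinants, and cancel the vertical block), but the central computation is mis-stated in a way that does not self-correct. By \eqref{eq:transfertbundle} the transport acts on the $\wedge^\ell\cal F$ factor by the \emph{forward} differential $\dd\wt\varphi_{\tau(\gamma)}|_{\ker\dd\pi_G}=C^{-1}$, not by $C$, and the prefactor is $b=\bigl|\det \dd\varphi_{\tau(\gamma)}|_{[\omega_{\wt\gamma}]}\bigr|^{1/2}\,\bigl|\det \dd\wt\varphi_{\tau(\gamma)}|_{\ker\dd\pi_G}\bigr|^{-1}=|\det A|^{-1/2}\,|\det C|$, not $|\det A|^{-1/2}|\det C|^{-1}$ (you dropped the exponent $-1$ on the second factor of $b_t$). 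These two slips do not cancel: your alternating sum reads $|\det A|^{-1/2}|\det C|^{-1}\det(\id-A)\det(\id-C)$, which differs from the correct value $|\det A|^{-1/2}|\det C|\det(\id-A)\det(\id-C^{-1})$ by a factor $\det C$ (use $\det(\id-C^{-1})=\det(C)^{-1}\det(\id-C)$, the dimension $d^2-d$ being even). Consequently, dividing your expression by $|\det(\id-\wt P_\gamma)|$ leaves a spurious factor $\det(\id-C)\big/\bigl(|\det(\id-C)|\,|\det C|\bigr)$ that your ``leaving'' formula silently discards. The correct cancellation rests on $|\det C|\,|\det(\id-C^{-1})|=|\det(\id-C)|$ together with $\det(\id-C^{-1})>0$, which holds because $C^{-1}=\dd\wt\varphi_{\tau(\gamma)}|_{\ker\dd\pi_G}$ is a contraction ($\ker\dd\pi_G\subset\wt E_s$).

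Two further points. With your definition $A=\dd\varphi_{-\tau(\gamma)}|_{E_u}$ the map $A$ is \emph{contracting} (backward time on the unstable space), so $\det(\id-A)>0$ and no sign $(-1)^{d-1}\sgn\det A$ arises; your claim that $A$ is expanding contradicts your own definition, and it is exactly the positivity $\det(\id-A)>0$, $\det(\id-C^{-1})>0$ that makes the identity hold on the nose rather than up to sign. Finally, the last step genuinely needs the symplecticity of $P_\gamma$: the two identities you invoke, $|\det(\id-P_\gamma)|=|\det A|\,|\det(\id-B)|\,|\det(\id-A^{-1})|$ and $|\det(\id-A^{-1})|=|\det(\id-A)|/|\det A|$, combine back into the trivial factorization $|\det(\id-P_\gamma)|=|\det(\id-A)|\,|\det(\id-B)|$ and cannot by themselves yield $|\det(\id-P_\gamma)|^{1/2}=|\det A|^{1/2}|\det(\id-B)|$. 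One must use the symplectic eigenvalue pairing, as the paper does, namely $\det(\id-P_{\gamma,s}^{-1})=\det(\id-P_{\gamma,u})$ and $\det P_{\gamma,s}=\det P_{\gamma,u}^{-1}$, to identify $|\det(\id-B)|$ with $|\det A|^{-1}|\det(\id-A)|$; with these corrections your argument becomes the paper's proof.
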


\begin{proof}
Let $\gamma(t)$ be a periodic orbit and let $\tilde{\gamma}(t) = ( \gamma(t), E_u(\gamma(t)),\: \omega_{\tilde{\gamma}} \in \tilde{\gamma}, \: z \in \gamma.$  Set 
\[P_{\gamma,u}  = \dd \varphi_{-\tau(\gamma)}(z)|_{E_u(z)},\quad P_{\gamma,s}  = \dd \varphi_{-\tau(\gamma)}(z)|_{E_s(z)},\]
\[ P_{\gamma, \perp} = \dd \wt \varphi_{-\tau(\gamma)}(\omega_{\tilde \gamma})|_{\ker \dd \pi_G(\omega)}, \quad P_{\gamma, \perp}^{-1} = \dd \wt \varphi_{-\tau(\gamma)}(\omega_{\tilde \gamma})^{-1} \vert_{\ker \dd \pi_G(\omega)} .\]
  The linearized Poincar\'e map $\wt P_\gamma$ of the closed orbit $\wt \gamma$ satisfies

\begin{equation}\label{eq:ptilde}
\begin{aligned}
\det(\id - \wt P_\gamma) &= \det\left(\id - \dd \wt \varphi_{-\tau(\gamma)}|_{\wt E_s(\omega) \oplus \wt E_u(\omega)}\right) \\
&= \det\left(\id - P_\gamma\right) \det\left(\id - P_{\gamma, \perp}\right)
\end{aligned}
\end{equation}
since $\wt E_s(\omega) \simeq E_s(z) \oplus \ker \dd \pi_G(\omega)$ and $\wt E_u(\omega) \simeq E_u(z)$ by Lemma \ref{lem:ident}.
Recall the well known formula
\[
\det(\id- A) = \sum_{j=0}^{k} {(-1)^j} \tr \wedge^jA
\]
for any endomorphism $A$ of a $k$-dimensional vector space. Moreover, notice that {\[\tr (\alpha_{\wt \gamma}^{k, \ell}) = b_{\tau(\gamma)}(\omega_{\tilde{\gamma}})\tr \wedge^k P_{\gamma, u} \tr \wedge^{\ell} P_{\gamma, \perp}^{-1},\]
since by \eqref{eq:transfertbundle}, $\alpha_{\wt \gamma}^{k, \ell}$ coincides with the map
\[
b_{\tau(\gamma)}(\omega_{\tilde{\gamma}})\wedge^k \left[\dd \varphi_{\tau(\gamma)}(\pi_G(\omega_{\wt \gamma}))^{-\top}\right] \otimes \wedge^\ell \left[\dd \varphi_{\tau(\gamma)}(\omega_{\wt \gamma})\right] :  \wedge^k \cal E^*|_{\omega_{\wt \gamma}} \otimes \wedge^\ell \cal F|_{\omega_{\wt \gamma}} \to \wedge^k \cal E^*|_{\omega_{\wt \gamma}} \otimes \wedge^\ell \cal F|_{\omega_{\wt \gamma}}.
\]}Therefore, one gets
\begin{equation}\label{eq:altsum}
\begin{aligned}
&{\sum_{\ell = 0}^{d^2 - d} \sum_{k= 0}^{d- 1} (-1)^{k + \ell}}{\tr (\alpha_{\wt \gamma}^{k, \ell})} \\
&\hspace{1cm}= b_{\tau(\gamma)}(\omega_{\tilde{\gamma}})\left(\sum_{k=0}^{d-1}(-1)^k \tr \wedge^k P_{\gamma,u }\right)\left(\sum_{\ell = 0}^{d^2-d} (-1)^\ell \tr \wedge^\ell P_{\gamma, \perp}^{-1}\right)
\\ &\hspace{1cm}= |\det(P_{\gamma, u})|^{-1/2}|\det(P_{\gamma, \perp})|
\det(\mathrm{Id}-P_{\gamma,u}) \det(\mathrm{Id}-P_{\gamma, \perp}^{-1}).
\end{aligned}
\end{equation}
Here we have used the equality
\[
\begin{aligned}
b_{\tau(\gamma)}(\omega_{\wt \gamma}) &= |\det \dd \varphi_{\tau(\gamma)}(\pi_G(\omega_{\wt \gamma}))|_{[\omega_{\wt \gamma}]}|^{1/2} \cdot |\det\left( \dd \wt \varphi_{\tau(\gamma)}(\omega_{\wt \gamma})|_{\ker \dd \pi_G}\right)|^{-1} \\
&=  |\det(P_{\gamma, u})|^{-1/2}|\det(P_{\gamma, \perp})|
\end{aligned}
\]
which holds because $P_{\gamma, u}$ and $P_{\gamma, \perp}$ are defined with $\dd\varphi_{-t}$ and $\dd\tilde{\varphi}_{-t}$, respectively.
Therefore \eqref{eq:ptilde} yields
\begin{equation}\label{eq:altsum}
\sum_{k, \ell}(-1)^{k + \ell}  \frac{{\tr (\alpha_{\wt \gamma}^{k, \ell})}}{|\det(\id-\wt P_\gamma)|} = \frac{ \det(\id-P_{\gamma,u}) \det(\id-P_{\gamma, \perp}^{-1})|\det(P_{\gamma, u})|^{-1/2}}{|\det(\id-P_\gamma)| |\det(\id-P_{\gamma, \perp})||\det(P_{\gamma, \perp})|^{-1}}.
\end{equation}
 Since $P_{\gamma}$ is a linear symplectic map, we have
\[
\det(\id-P_{\gamma,s}^{-1}) = \det(\id-P_{\gamma, u}),\: \det(P_{\gamma,s}) = \det(P_{\gamma,u}^{-1} ),\] 
 and  one deduces
\[
\begin{aligned}
|\det(\id-P_\gamma)| &= |\det(\id-P_{\gamma,u})||\det(\id-P_{\gamma,s})| \\
&=  |\det (P_{\gamma, s})| |\det(\id - P_{\gamma, u})| |\det( \id -  P_{\gamma, s}^{-1})|\\
 &=   |\det (P_{\gamma, u})|^{-1} |\det(\id-P_{\gamma, u})|^2. 
\end{aligned}
\]
For $ t > 0$ the map $\dd \wt \varphi_{t} = (\dd \wt \varphi_{-t})^{-1} $ is contracting on $\ker \dd \pi_G \subset \tilde{E}_s(\omega_{\tilde{\gamma}})$ (resp. $\dd \varphi_{-t}$ is contracting on $E_u(z)$) and these contractions yield
 $\det(\id-P_{\gamma, \perp}^{-1}) > 0$ (resp. $\det(\id-P_{\gamma,u}) > 0$). Thus the terms involving $P_{\gamma, \perp}$ in (\ref{eq:altsum}) cancel and since
\[|\det(\id-P_{\gamma})|^{-1/2} = |\det(P_{\gamma,u})|^{1/2} \det(\id-P_{\gamma, u})^{-1}, \]
 the right hand side of (\ref{eq:altsum}) is equal to $|\det(\id- P_{\gamma})|^{-1/2}.$ \end{proof}

\subsection{Meromorphic continuation of $\eta_\mathrm N$}
\label{subsec:meroetan}

From Lemma \ref{lem:alternatedsum} and \eqref{eq:traceresolv}, we deduce that for  $\Re(s) \gg 1$, we have
\[
\eta_\mathrm{N}(s) = \sum_{k = 0}^{d-1} \sum_{\ell = 0}^{d^2 - d} (-1)^{k+\ell}  \tr^\flat \Rbf_{\varepsilon}^{k, \ell}(s),
\]
where $\eta_{\mathrm N}(s)$ is defined by
\[
\eta_\mathrm{N}(s) = \sum_{\gamma} \frac{\tau^\sharp(\gamma) \e^{-\tau(\gamma) s}}{|\det(1-P_\gamma)|^{1/2}}.
\]
Since for every $k, \ell$ the family $s \mapsto \Rbf_\varepsilon^{k, \ell}(s)$ extends to a meromorphic family on the whole complex plane, so does $s \mapsto \eta_{\mathrm N}(s).$
Indeed, it follows from the proof of {\cite[Lemma 3.2]{dyatlov2016pollicott}} that $s \mapsto K_{\Rbf_\varepsilon^{k, \ell}(s)}$ is continuous as a map\footnote{This follows from the fact that the estimates on the wavefront set of $\Rbf_{\varepsilon}^{k, \ell}(s)$ given in \cite[Lemma 3.5]{dyatlov2016pollicott} are locally uniform with respect to $s \in \mathbb{C}$.}
\[
\C \setminus \mathrm{Res}(\Rbf_{\varepsilon}^{k,\ell}) \to \mathcal{D}'_\Gamma(G \times G, \cal E_{k, \ell} \boxtimes \cal E_{k, \ell}^*).
\]
Here for $s \notin \mathrm{Res}(\Rbf_{\varepsilon}^{k,\ell})$ the distribution $K_{\Rbf_\varepsilon^{k, \ell}(s)}$ is the Schwartz kernel of $\Rbf_\varepsilon^{k, \ell}(s)$ and (see (\ref{eq:wfset}) for the notation)
\[
\Gamma = \Delta_\varepsilon \cup \Upsilon_{+, \varepsilon} \cup \wt E_+^* \times \wt E_-^*,
\]
where 
$
\Delta_\varepsilon = \{(\Psi_\varepsilon(\omega, \Omega), \omega, \Omega): (\omega, \Omega) \in T^*(\wt V_u) \setminus \{0\}\}$
and 
\[
\Upsilon_{+,\varepsilon} = \{(\Psi_t(\omega, \Omega), \omega, \Omega): (\omega, \Omega) \in  T^*(\wt V_u) \setminus \{0\},~t\geqslant \varepsilon,~\langle \wt Y(\omega), \Omega \rangle = 0\},
\] 
while $\mathcal{D}'_\Gamma(G \times G, \cal E_{k, \ell} \boxtimes \cal E_{k, \ell}^*)$ is the space of distributions valued in $\cal E_{k, \ell} \boxtimes \cal E_{k, \ell}^*$ whose wavefront set is contained in $\Gamma$. This space is endowed with its usual topology (see \cite[\S8.2]{horma1983vol}). {Thus, outside the set of poles $\mathrm {Res}(\Rbf_{\varepsilon}^{k,\ell})$, we apply the procedure with a flat trace}. In particular, $s \mapsto \tr^\flat \Rbf_{\varepsilon}^{k, \ell}(s)$ is continuous on $\C \setminus \mathrm{Res}(\Rbf_{\varepsilon}^{k, \ell})$ by \cite[Theorem 8.2.4]{horma1983vol}. Finally, Cauchy's formula implies that this map is meromorphic on $\C$ and this completes the proof that the Dirichlet series $ \eta_{\mathrm N}(s) $ admits a meromorphic continuation in $\C$.  

Next, we establish that $\eta_\mathrm N(s)$ has simple poles with integer residues. To do this, we may proceed as in {\cite[\S4]{dyatlov2016pollicott}}. For the sake of completeness we reproduce the argument.  Let $s_0 \in \mathrm{Res}(\Rbf_{k, \ell})$ for some $k, \ell$. Recalling the development \eqref{eq:laurent}, it is enough to show that 
\begin{equation}\label{eq:tracezero}
{\tr^\flat\left(\wt \chi \e^{-\varepsilon (s_0 + \Qbf_{k, \ell}) }\left[(\Qbf_{k, \ell} + s_0)^{j-1} \Pi^{k, \ell}_{s_0} \right]\wt \chi\right) = 0, \quad j \geqslant 2,}
\end{equation}
and 
\begin{equation}\label{eq:tracerank}
{\tr^\flat \left(  \wt \chi  \e^{- \varepsilon(s_0 + \Qbf_{k, \ell})}\Pi^{k, \ell}_{s_0} \wt \chi \right) = \mathrm{rank}~\Pi_{s_0}^{k, \ell}.}
\end{equation}
 In the following we fix $k$ and $\ell$.  We may write
\[
\Pi^{k, \ell}_{s_0} = \sum_{i=1}^m \ubf_i \otimes \mathbf{v}_i,
\]
where $\otimes$ denotes the tensor product and by \eqref{eq:suppK} 
 for $i = 1, \dots, m$ we have
\begin{equation}\label{eq:uivi}
\begin{aligned}
\mathbf{u}_{i} \in \mathcal{D}'(\tilde V_u, \cal E_{k, \ell}), \quad \supp(\mathbf{u}_{i}) \subset \Gamma_+, \quad \WF'(\mathbf{u}_{i}) \subset \tilde{E}^*_+, \\
\mathbf{v}_{i} \in \mathcal{D}'(\tilde V_u, \cal E_{k, \ell}^{*}), \quad \supp(\mathbf{v}_{i}) \subset \Gamma_-, \quad \WF'(\mathbf{v}_{i}) \subset \tilde{E}^*_-.
\end{aligned}
\end{equation}
The relations
\[
\wt E^*_+ \cap  \wt E^*_- \cap (T^*(\wt V) \setminus \{0\}) = \emptyset, 
\] 
make possible to define the pairing $\langle \mathbf{u}_i, \mathbf{v}_p \rangle$ on $\cal E_{k, \ell} \times \cal E_{k, \ell}^*$ for $i, p = 1,\dots,m$ which yields a distribution on $\tilde V_u$. This distribution is compactly supported since
\[
\supp \mathbf{u}_i \cap \supp \mathbf{v}_p \subset \Gamma_+ \cap \Gamma_- = \wt K_u.
\] 
The family $(\mathbf{u}_{i})$ is a basis of the range of $\Pi^{k, \ell}_{s_0}.$
By definition of the flat trace using the information on the wavefront sets and the supports of ${\bf u}_i$ and ${\bf v}_j$, we can write
\begin{equation}\label{eq:traceintegral}
\begin{aligned}
\tr^\flat\bigl(&\wt \chi \e^{- \varepsilon ( s_0 +\Qbf_{k, \ell})}\left[(\Qbf_{k, \ell} + s_0)^{j-1} \Pi^{k, \ell}_{s_0} \right] \wt \chi\bigr)\\
& {\hspace{0.4cm} = \sum_{i = 1}^m \int_{\wt V_u}  \langle\wt \chi \e^{-\varepsilon ( s_0 +\Qbf)} (\Qbf_{k, \ell} + s_0)^{j-1} \mathbf{u}_{i}, \wt \chi \mathbf{v}_{i}\rangle.}
\end{aligned}
\end{equation}
Here the integrals make sense taking into account the estimates of the supports and the wavefront sets of $\mathbf{u}_{i}$ and {$\mathbf{v}_p$} mentioned above. Since $\Pi^{k, \ell}_{s_0}\circ\Pi^{k, \ell}_{s_0} = \Pi^{k, \ell}_{s_0},$ the family {$(\mathbf{v}_{p})$} is dual to the basis $(\mathbf{u}_{i})$ in the sense that 
\begin{equation}\label{eq:kronecker}
\int_{\wt V_u} \langle \mathbf{u}_{i} , \mathbf{v}_{p}\rangle = \delta_{ip}, \quad 1 \leqslant i, p \leqslant m,
\end{equation}
where $\delta_{ip}$ are the Kronecker symbols. Introduce
\[
C^{(j)}_{s_0, k, \ell} = \bigl\{\mathbf{u} \in \mathcal{D}'(\tilde V_u, \cal E_{k, \ell}):\:  {\rm supp}\: u \subset \Gamma_{+}, \: \WF(u) \subset \tilde{E}^*_{+},\: (\Qbf_{k, \ell} + s_0)^j \mathbf{u} = 0\bigr\}.
\]
Then, since $\wt \chi = 1$ near $\wt K_u,$ by applying (\ref{eq:traceintegral}), one deduces that {$\wt \chi \e^{-\varepsilon (s_0 + \Qbf_{k, \ell})} (\Qbf_{k, \ell} + s_0) \Pi^{k, \ell}_{s_0} \wt \chi$} maps
\[
\begin{aligned}
\wt \chi C_{s_0, k, \ell}^{(j + 1)} \to \wt \chi C_{s_0, k, \ell}^{(j)}, \quad \text{and} \quad \wt \chi C_{s_0, k, \ell}^{(1)} \to \{0\}, \quad j \geqslant 1.
\end{aligned}
\]
This fact and \eqref{eq:kronecker} show that \eqref{eq:tracezero} holds. To prove \eqref{eq:tracerank}, we write
\[
\begin{aligned}
&\sum_i\int_{\wt V_u}  \langle \wt \chi \e^{-\varepsilon (s_0 + \Qbf_{k, \ell})} \mathbf{u}_{i} , \wt \chi \mathbf{v}_{i} \rangle \\
& \hspace{2cm}= \sum_i \int_{\wt V_u} \langle  \mathbf{u}_{i},  \mathbf{v}_{i}\rangle \
{ - \sum_i \int_{0}^{\varepsilon} \dd t \int_{\wt V_u} \left\langle \wt \chi \e^{-t(s_0 + \Qbf_{k, \ell})} (\Qbf_{k, \ell}+s_0) \mathbf{u}_i, \wt \chi \mathbf{v}_i \right \rangle.}
\end{aligned}
\]
Now, we replace $\varepsilon$ by $t$ in \eqref{eq:tracezero} for any $t \in [0, \varepsilon]$, and we obtain that the last sum in the right hand side of the above equation vanishes. Finally, applying \eqref{eq:kronecker},  we obtain \eqref{eq:tracerank}.

\section{Dynamical zeta function for particular rays}\label{sec:dir}
In this section we adapt the above construction to prove the following result. 
\begin{theo}
\label{thm:dirichletq}
Let $q \in \mathbb N_{\geqslant 1}$. The function $\eta_q(s)$ defined by
\[
\eta_q(s) = \sum_{\gamma \in {\mathcal P},\:m(\gamma) \in q \mathbb N} \frac{\tau^\sharp(\gamma) \e^{-s\tau(\gamma)}}{|\det(\mathrm{Id}-P_\gamma)|^{1/2}}, \quad \Re(s) \gg 1,
\]
where the sum runs over all periodic rays $\gamma$ with $m(\gamma) \in q \N$, admits a meromorphic continuation to the whole complex plane with simple poles and residues valued in $\mathbb Z / q$.
\end{theo}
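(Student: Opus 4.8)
The plan is to run the argument of \S\ref{sec:neumann} for $\eta_q$, after tensoring the bundles $\cal E_{k,\ell}$ of \S\ref{subsec:vectorbundle} with a rank-$q$ \emph{$q$-reflection bundle} $\cal R_q$ whose flow-monodromy around a closed orbit detects the residue of $m(\gamma)$ modulo $q$. To build $\cal R_q \to M$, recall that the gluing region $\cal G = \pi_M(\cal D_{\mathrm{in}})$ is a smooth hypersurface of $M$, cooriented by the flow (in the charts $\Psi_{z_\star}$ of \S\ref{subsec:smooth} it is $\{t=0\}$ with $X = \partial_t$), that $\cal O = \pi_M(\mathring B) = M \setminus \cal G$ is connected, and that for each gluing chart $\cal O_{z_\star}$ the set $\cal O_{z_\star}\cap\cal O = \{t\neq 0\}$ has two components $\{t>0\}$ and $\{t<0\}$ which correspond intrinsically to the outgoing, resp. incoming, side of a reflection. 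I would set $\cal R_q$ equal to $\cal O\times\C^q$ over $\cal O$ and to $\cal O_{z_\star}\times\C^q$ over each $\cal O_{z_\star}$, glued by the \emph{locally constant} transition function equal to $\mathrm{Id}$ on $\{t>0\}$ and to the cyclic shift $C\in GL_q(\C)$, $Ce_i = e_{i+1}$ (indices mod $q$), on $\{t<0\}$. One checks easily (using \eqref{eq:changecoord} and the coorientation) that this defines a cocycle, so $\cal R_q$ is a smooth (indeed flat) bundle over $M$; the billiard flow lifts to a flow $\Phi^{\cal R_q}_t$ acting by the identity in each of these trivializations, which in the global gauge over $\cal O$ multiplies the fibre coordinate by $C^{\pm1}$ each time the trajectory crosses $\cal G$. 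Pulling $\cal R_q$ back to $G$ by $\pi_G$ and extending it, together with its flow action, smoothly over $N\setminus V$ (this is cosmetic, since $\wt\chi$ localizes near $\wt K_u$), we see that for a periodic orbit $\wt\gamma$ of $\wt\varphi_t$ lying over a periodic ray $\gamma$ the trajectory crosses $\pi_G^{-1}(\cal G)$ exactly $m(\gamma)$ times, so its $\cal R_q$-monodromy $\rho_{\wt\gamma}$ is conjugate to $C^{\pm m(\gamma)}$ and
\[
\tr(\rho_{\wt\gamma}) = \tr\bigl(C^{\pm m(\gamma)}\bigr) = q\cdot\indic_{m(\gamma)\in q\N}.
\]

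Since the flow acts on $\cal R_q$ by permutation matrices, it affects neither the normalizing factor $b_t(\omega)$ of \eqref{eq:transfertbundle} nor the linearized Poincar\'e map $\wt P_\gamma$, so Lemma \ref{lem:alternatedsum} remains valid fibrewise after tensoring with $\cal R_q$. I would then repeat \S\ref{subsec:dyatlovguillarmou}--\S\ref{subsec:meroetan} with $\cal E_{k,\ell}$ replaced by $\cal E_{k,\ell}\otimes\cal R_q$: fixing a smooth connection, one writes the generator of the twisted transfer operator as $\Pbf^{(q)}_{k,\ell} = \nabla_{\wt X} + \Abf^{(q)}_{k,\ell}$ and sets $\Qbf^{(q)}_{k,\ell} = \nabla_{\wt Y} + \Abf^{(q)}_{k,\ell}$; this is again a first-order operator with scalar principal symbol $\tfrac1i\wt Y$, so assumptions (A1)--(A5) of \cite{dyatlov2016pollicott} hold and \cite[Theorems 1 and 2]{dyatlov2016pollicott} provide a meromorphic continuation to $\C$ of $\Rbf^{k,\ell,(q)}_\varepsilon(s) = \wt\chi\,\e^{-\varepsilon(s+\Qbf^{(q)}_{k,\ell})}(\Qbf^{(q)}_{k,\ell}+s)^{-1}\wt\chi$, with the Laurent expansion \eqref{eq:laurent} and the wavefront bounds \eqref{eq:wfset}, \eqref{eq:suppK}; in particular $\tr^\flat \Rbf^{k,\ell,(q)}_\varepsilon(s)$ is well defined. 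The Guillemin trace formula, together with $\tr(\alpha^{k,\ell}_{\wt\gamma}\otimes\rho_{\wt\gamma}) = \tr(\alpha^{k,\ell}_{\wt\gamma})\,\tr(\rho_{\wt\gamma})$, gives for $\Re s\gg 1$
\[
\tr^\flat \Rbf^{k,\ell,(q)}_\varepsilon(s) = \sum_{\wt\gamma}\frac{\e^{-s\tau(\gamma)}\tau^\sharp(\gamma)\,\tr(\alpha^{k,\ell}_{\wt\gamma})\,\tr(\rho_{\wt\gamma})}{|\det(\id-\wt P_\gamma)|},
\]
and summing over $0\leqslant k\leqslant d-1$, $0\leqslant\ell\leqslant d^2-d$ with signs $(-1)^{k+\ell}$, using Lemma \ref{lem:alternatedsum} inside the sum and $\tr(\rho_{\wt\gamma}) = q\,\indic_{m(\gamma)\in q\N}$, yields
\[
q\,\eta_q(s) = \sum_{k=0}^{d-1}\sum_{\ell=0}^{d^2-d}(-1)^{k+\ell}\,\tr^\flat \Rbf^{k,\ell,(q)}_\varepsilon(s), \quad \Re s\gg 1.
\]
The right-hand side is meromorphic on $\C$ exactly as in \S\ref{subsec:meroetan} (continuity of $s\mapsto K_{\Rbf^{k,\ell,(q)}_\varepsilon(s)}$ into $\cal D'_\Gamma$, then Cauchy's formula), so $\eta_q$ extends meromorphically; and the computation \eqref{eq:tracezero}--\eqref{eq:tracerank} carries over verbatim, showing that each $\tr^\flat\Rbf^{k,\ell,(q)}_\varepsilon$ has only simple poles with residue $\mathrm{rank}\,\Pi^{k,\ell,(q)}_{s_0}\in\Z_{\geq 0}$. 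Hence $q\,\eta_q$ has simple poles with integer residues, i.e. $\eta_q$ has simple poles with residues in $\tfrac1q\Z$.

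The only genuinely new point --- and the one I expect to demand care --- is the geometric construction of $\cal R_q$: one must verify that it is smooth for the smooth structure on $M$ of \S\ref{subsec:smooth}, which hinges on the fact that $\cal O_{z_\star}\cap\cal O$ is \emph{disconnected}, so that a transition function jumping by $C$ across $\cal G$ is locally constant (hence smooth), and that ``one crossing of $\cal G$'' is exactly ``one reflection'', so that the monodromy exponent of $\wt\gamma$ is precisely $m(\gamma)$. Everything downstream is a routine tensoring of the Neumann case, because the $\cal R_q$-action is by permutations and is thus fibrewise transparent to Lemma \ref{lem:alternatedsum}. Finally, taking $q=2$ and combining with the meromorphic continuation of $\eta_{\mathrm N}$ through the identity \eqref{eq:1.4} yields the meromorphic continuation of $\eta_{\mathrm D}$, which completes the proof of Theorem \ref{thm:main}.
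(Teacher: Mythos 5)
Your proposal is correct and follows essentially the same route as the paper: the paper's \S\ref{subsec:qreflexion}--\S\ref{subsec:actionG} construct exactly this flat $q$-reflection bundle (with the cyclic shift $A(q)$ as locally constant transition map across the gluing region, so that $\tr P_{q,\gamma}=q\,\indic_{q\N}(m(\gamma))$), tensor it with $\cal E_{k,\ell}$, and rerun the Dyatlov--Guillarmou/flat-trace argument of \S\ref{sec:neumann} to get \eqref{eq:traceetaq} and the integrality of the residues of $q\eta_q$. The only difference is cosmetic (which side of $\cal G$ carries the shift, and $\C^q$ versus $\R^q$ fibres), which does not affect the monodromy trace.
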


Note that for large $\Re(s)$ we have the formula
\begin{equation}\label{eq:4.1}
\eta_\mathrm D(s) = 2 \eta_2(s) - \eta_\mathrm N(s).
\end{equation}
In particular, Theorem  \ref{thm:dirichletq} implies that $\eta_\mathrm D(s)$ also extends meromorphically to the whole complex plane, since $\eta_\mathrm N(s)$ does by the preceding section. In particular, we obtain Theorem \ref{thm:main} since $2 \eta_2(s)$ has simple poles with residues in $\Z.$ 

\subsection{The $q$-reflection bundle}\label{subsec:qreflexion}
For $q \geqslant 2$ define the \textit{$q$-reflection bundle} $\cal R_q \to M$ by
\begin{equation}\label{eq:bundle}
\cal R_q = \left. \left(\left[S\R^d \setminus \left(\pi^{-1}(\mathring{D}) \cup \cal D_\mathrm g \right)\right] \times \R^q\right) \right/ \approx,
\end{equation}
where the equivalence classes of the relation $\approx$ are defined as follows. For $(x,v) \in S\R^d \setminus \left(\pi^{-1}(\mathring{D}) \cup \cal D_\mathrm g \right)$ and $\xi \in \R^q$, we set
\[
[(x,v,\xi)] = \left\{(x,v, \xi) , (x, v', A(q) \cdot \xi)\right\} \quad \text{ if } (x,v) \in \cal D_\mathrm{in},\: (x, v') \in \cal D_\mathrm{out},
\]
where $A(q)$ is the $q \times q$ matrix with entries in $\{0, 1\}$ given by
\[A(q) = 
\begin{pmatrix} 
			    0 &   &  & 1\\
                              1 & 0 & & \\
                               & \ddots & \ddots & \\              
                               & & 1  & 0
                              \end{pmatrix}.
\]
{Clearly, the matrix $A(q)$ yields a shift permutation 
\[A(q) (\xi_1, \xi_2, ..., \xi_q) = (\xi_q, \xi_1,...,\xi_{q-1}).\]}This indeed defines an equivalence relation since $(x,v') \in \cal D_\mathrm{out}$ whenever $(x,v) \in \cal D_\mathrm{in}$. Note that
\begin{equation}\label{eq:Aq}
A(q)^q = \id, \quad \tr A(q)^j = 0, \quad j = 1, \dots, q-1.
\end{equation}
Let us describe the smooth structure of $\cal R_q$, using the charts of $M$ and  the notations of \S\ref{subsec:smooth}. {For $z_\star \in \cal D_\mathrm{in}$, let $U_{z_\star} = B(0, \delta) = \{x \in \R^{d-1}: \: |x| < \delta\}$ be a neighborhood of $0$ used for the definition of $F_{z_\star}$ (see \S\ref{subsec:smooth}) and let 
\[\Psi_{z_\star}^{-1} : {\mathcal O}_{z_\star} \rightarrow ]-\epsilon, \epsilon[ \times B(0, \delta) \times B(0, \delta)= W_{z_\star}\] be a chart.} Then the bundle $\cal R_q \to M$ can be defined by defining its transition maps, as follows. {Let $W = \Psi^{-1} (B \setminus \pi^{-1}(\partial D))$ be a chart.} In the smooth coordinates introduced in \S\ref{subsec:smooth}, we have {$W_{z_\star} \cap W = W_+ \sqcup W_-$}, where
\[
{W_+ = \left]0, \varepsilon\right[  \times B(0, \delta) \times B(0, \delta)\quad \text{ and }  \quad W_- = \left]-\varepsilon, 0\right[  \times B(0, \delta) \times B(0, \delta).}
\]
Then we define the transition map {$\alpha_{z_\star} : W_{z_\star} \cap W \to \mathrm{GL}(\R^q)$} of the bundle $\cal R_q$ with respect to the pair of charts {$(\Psi_{z_\star}, \Psi)$} to be the locally constant map defined by 
\[
\alpha_{z_\star}(z) = \left\{ \begin{matrix} \id &\text{ if } z \in W_-, \vspace{0.2cm} \\ A(q) &\text{ if } z \in W_+.\end{matrix} \right.
\]
For $z_\star, z_\star' \in \cal D_\mathrm{in}$, the transition map of $\cal R_q$ for the pair of charts {$(\Psi_{z_\star},\Psi_{z_\star'})$} is declared to be constant and equal to $\id$ on {$W_{z_\star} \cap W_{z_\star'}$}. In this way we obtain a smooth bundle $\cal R_q$ over $M$, which is clearly homeomorphic to the quotient space (\ref{eq:bundle}). Since the transition maps of $\cal R_q$ are locally constant, there is a natural flat connection $\dd^{q}$ on $\cal R_q$ which is given in the charts by the trivial connection on $\R^q$. 

Consider a small smooth neighborhood $V$ of $K$. As in \S\ref{subsec:anosov}, we embed $V$ into a smooth compact manifold without boundary $N$, and we fix an extension of $\cal R_q$ to $N$ (this is always possible if we choose $N$ to be the {double} manifold of $V$). Consider any connection $\nabla^q$ on the extension of $\cal R_q$ which coincides with $\dd^q$ near $K$, and denote by 
\[
P_{q,t}(z) : \cal R_q(z) \to \cal R_q(\varphi_t(z))
\]
the parallel transport of $\nabla^q$ along the curve $\{\varphi_\tau(z)~:~0 \leqslant \tau \leqslant t\}$.
We have a smooth action of {$\varphi_t^q$ on $\cal R_q$ which is given by the horizontal lift of $\varphi_t$}
\[
\varphi_t^q(z, \xi) =(\varphi_t(z), P_{q,t}(z) \cdot \xi), \quad (z, \xi) \in \cal R_q.
\]
{As in (\ref{subsec:flattraceresolv}), we see that for a periodic orbit $ \gamma$ we define $P_{q, \gamma}$ as an endomorphism on $\R^q$}. From \eqref{eq:Aq},  and the fact that $\nabla^q$ coincides with $\dd^q$ near $K$, we easily deduce that for any periodic orbit $\gamma = (\varphi_\tau(z))_{\tau \in [0, \tau(\gamma)]}$, we have

\begin{equation}\label{eq:holonomy}
{\tr (P_{q, \gamma})} = 
\left\{
\begin{matrix} q &\text{ if } & m(\gamma) = 0 \mod q, \vspace{0.2cm}\\ 
0 &\text{ if } & m(\gamma) \neq 0 \mod q.
\end{matrix}
\right.
\end{equation}

\subsection{Transfer operators acting on $G$}\label{subsec:actionG}
Now, consider the bundle 
\[\cal E_{k, \ell}^q = \cal E_{k, \ell} \otimes \pi_G^*\cal R_q,\] where $\pi_G^*\cal R_q$ is the pullback of $\cal R_q$ by $\pi_G$ and $\cal E_{k, \ell}$ is defined in \S\ref{subsec:vectorbundle}, so that $\pi_G^* \cal R_q \rightarrow G$ is a vector bundle over $G$. We may lift the flow $\varphi_t^q$ to a flow $\Phi^{k, \ell, q}_t$ on $\cal E_{k, \ell}^q$ which is defined locally near $\wt K_u$ by
\[
\begin{aligned}
&\Phi^{k, \ell, q}_t(\omega, u \otimes v \otimes \xi) \\
 & \quad \quad = \Bigl(\wt \varphi_t(\omega), ~b_t(\omega) \cdot \left[ \left(\dd \varphi_{t}(\pi_G(\omega))^{-\top}\right)^{\wedge k}(u) \otimes (\dd \wt \varphi_t(\omega))^{\wedge \ell}(v) \otimes P_{q, t}(z) \cdot \xi \right]\Bigr)
 \end{aligned}
\]
for any $\omega = (z, E) \in G, \: u \otimes v \otimes \xi \in \cal E_{k, \ell}^q(\omega)$ and $t \in \R$. Here $b_t(\omega)$ is defined in \S\ref{subsec:vectorbundle}. As in \S\ref{subsec:dyatlovguillarmou}, we consider a smooth connection $\nabla^{k, \ell, q} = \nabla^{k, \ell} \otimes \pi_G^* \nabla^q$ on $\cal E_{k, \ell}^q$. Define the transfer operator
\[\Phi^{k, \ell, q, *}_{-t}: C^{\infty} (G, \cal E_{k, \ell}^q) \rightarrow C^{\infty} (G, \cal E_{k, \ell}^q)\]
by
\[\Phi^{k, \ell, q, *}_{-t}{\bf u} (\omega) = \Phi^{k, \ell, q}_{t}[{\bf u} (\tilde{\varphi}_{-t}(\omega)],\: {\bf u} \in C^{\infty}(G, \cal E_{k, \ell}^q).\]
Then  the operator
\[
\Pbf_{k, \ell, q} = \left.\frac{\dd}{\dd t} \Bigl(\Phi^{k, \ell, q, *}_{-t} \Bigr)\right|_{t=0},\:{\bf u} \in C^{\infty}(G, \cal E_{k, \ell}^q)\]
which is defined near $\wt K_u$, can be written locally as $\nabla^{k, \ell, q}_{\wt X} + \mathbf{A}_{k, \ell, q}$ for some $\mathbf{A}_{k, \ell, q} \in C^\infty(\wt U_u, \End\: \cal E_{k, \ell}^q)$ defined in some small neighborhood $\wt U_u$ of $\wt K_u$. Next, we choose some $\mathbf{B}_{k, \ell, q} \in C^\infty(G, \End\: \cal E_{k, \ell}^q)$ which coincides $\mathbf{A}_{k, \ell, q}$ near $\wt K_u$. We consider $\wt V_u$ and $\wt Y$ as in \S\ref{subsec:isolatingblocks}, and set 
\[
\Qbf_{k, \ell, q} = \nabla^{k, \ell, q}_{\wt Y}  + \mathbf{B}_{k, \ell, q} : C^\infty(G, \cal E_{k, \ell}^q) \to  C^\infty(G, \cal E_{k, \ell}^q).
\]

\subsection{Meromorphic continuation of $\eta_q(s)$}
For $\wt \chi \in C^\infty_c(\wt V_u)$ such that $\wt \chi \equiv 1$ near $\wt K_u$, define
\[
\Rbf_{\varepsilon}^{k, \ell, q}(s) = \wt \chi\e^{-\varepsilon(\Qbf_{k, \ell, q}+s)}  (\Qbf_{k, \ell, q}  + s)^{-1} \wt \chi.
\]
Repeating the argument of the preceding section, one can obtain an analog of (\ref{eq:traceresolv}), where the factor $\tr (\alpha^{k, \ell}_{\wt \gamma})$ 
must be replaced by $\tr (\alpha^{k, \ell}_{\wt \gamma})\tr (P_{q, \gamma}).$
This leads to a meromorphic continuation of $\Rbf_{\varepsilon}^{k, \ell, q}(s).$ 

On the other hand, by (\ref{eq:holonomy}) one gets ${\tr (P_{q, \gamma})} = {\bf 1}_{q\N}(m(\gamma))$.
In particular,  proceeding exactly as in the the preceding section, we obtain that for $\Re(s)$ large enough we have
\begin{equation}\label{eq:traceetaq}
\sum_{k = 0}^{d -1}\sum_{\ell= 0}^{d^2 - d} (-1)^{k + \ell}\tr^\flat \Rbf_{\varepsilon}^{k, \ell, q}(s) = q{\sum_{\substack{\gamma \in {\mathcal P} \\ {m(\gamma) \in q \N}}}\frac{\tau^\sharp(\gamma)\e^{-s \tau(\gamma)} }{|\det(\mathrm{Id}-P_\gamma)|^{1/2}}.}
\end{equation}
Therefore, repeating the argument of \S\ref{sec:neumann}, we establish  a meromorphic continuation of the function $s \mapsto \eta_q(s).$ Finally, by using \eqref{eq:traceetaq}, we may proceed exactly as in \S\ref{subsec:meroetan} to show that $q \eta_q(s)$ has integer residues. This completes the proof of Theorem \ref{thm:dirichletq}.

\section{Modified Lax--Phillips conjecture for real analytic obstacles}\label{sec:real}
\def\ec{{\mathcal E}}
In this section, we assume that the obstacles $D_1, \dots, D_r$ have real analytic boundary. Then the smooth structure on $M$ defined in \S\ref{subsec:smooth} induces an analytic structure on $M$. Indeed, with notations of \S\ref{subsec:smooth}, the local parametrizations $F_{z_\star}$ of $\cal D_\mathrm{in}$ can be chosen to be real analytic, as $\mathcal D_\mathrm{in}$ is a real analytic submanifold of $S\R^{d-1}$. This makes the transition maps \eqref{eq:changecoord} real analytic, and thus we obtain a real analytic structure on $M$. In the charts defined by $\Psi_{z_\star}$ and {$\Psi$ (see \S\ref{subsec:qreflexion})}, the billiard flow $\varphi_t$ is a translation and it defines a real analytic flow. Of course, the Grassmannian bundle $G \to M$ also becomes real analytic. Consequently, the lifted flow $\wt \varphi_t$ on $G$, which is defined by \eqref{eq:defflow}, is real analytic as well. 

Consider the bundles $\mathcal{E}_{k, \ell}^q \to G$ defined in \S\ref{subsec:actionG} for $q \geqslant 2$, $1 \leqslant k \leqslant d-1$ and $1 \leqslant \ell \leqslant d^2 - d$. In the case $q = 1$ the bundles $\ec_{k, \ell}^1 \to G$ are isomorphic to $\ec_{k, \ell}$, $\ec_{k, \ell}$ being the bundles defined in \S\ref{subsec:vectorbundle}. As before, we naturally extend the flow $\wt \varphi_t$ to a flow $\Phi^{k, \ell, q}_t$ (which is non-complete) on $\ec_{k, \ell}^q$. We set
\[
\ec^{+}_q = \bigoplus_{k + \ell \text{ even}} \mathcal{E}_{k, \ell}^q \quad \text{ and } \quad \ec^{-}_q = \bigoplus_{k + \ell \text{ odd}} \mathcal{E}_{k, \ell}^q.\]
Define the flows $\Phi_{t,q}^+$ and $\Phi_{t,q}^-$, acting respectively on the bundles $\cal E_q^+$ and $\cal E_q^-$, by
\[\Phi_{t, q} ^+ = \bigoplus_{k + \ell \text{ even}} \Phi_t^{k, \ell,q} \quad \text{ and } \quad \Phi_{t, q}^{-} = \bigoplus_{k + \ell \text{ odd}} \Phi^{k, \ell, q}_t.\]
 Then $\Phi_{t, q}^\pm$ is a  virtual lift of $\wt \varphi_t$ to the virtual bundle $\ec^{\mathrm{virtual}}_q= \ec_q^{+} - \ec_q^{-}$, in the sense of \cite[p. 176]{fried1995meromorphic}.  Next,
 given a periodic ray $\gamma$, a point $\omega = (z, E) \in G,\: z \in \gamma,$ and a bundle $\xi \rightarrow G$ over $G$, one considers the transformation $\Phi_{\tau(\gamma)}: \xi_{\omega} \rightarrow \xi_{\omega},$ where $\xi_{\omega} $ is the fibre over $\omega$ and $\Phi_t$ is the lift of the flow $\tilde{\varphi}_t$ to $\xi.$ Then we set $\chi_{\gamma}(\xi) = {\rm tr} \: \Phi_{\tau(\gamma)} .$ Following \cite[p. 176]{fried1995meromorphic}, one defines $\chi_{\gamma} (\ec^{+}_q- \ec^{-}_q) = \chi_{\gamma}(\ec^{+}_q) - \chi_{\gamma}( \ec^{-}_q).$ For a periodic ray $\gamma$ related to a primitive periodic ray $\gamma^{\sharp}$ one defines $\mu(\gamma) \in \N$ determined by the equality  $\tau(\gamma) = \mu(\gamma) \tau( {\gamma}^{\sharp}).$ 
 
 After this preparation introduce the zeta function
\[
\zeta_q(s) := \exp\Bigl( - \frac{1}{q} \sum_{\tilde{\gamma}} \frac{\chi_{\gamma}(\ec^{+}_q- \ec^{-}_q) }{\mu(\gamma) |\det(\mathrm{Id}-\tilde{P}_{\gamma})|}\e^{-s \tau(\gamma)}\Bigr), \quad \Re(s) \gg 1.
\]
This function corresponds exactly to the {\it flat-trace function} $s \mapsto T^{\flat}(s)$ introduced by Fried \cite[p. 177]{fried1995meromorphic}. On the other hand, one has
\[\chi_{\gamma}(\ec^{+}_q- \ec^{-}_q) = {\rm tr} \: \Phi_{\tau(\gamma), q} ^{+} - {\rm tr}\: \Phi_{\tau(\gamma), q} ^{-} = \sum_{k, \ell} (-1)^{k + \ell} {\rm tr} \:\Phi^{k, \ell , q}_{\tau(\gamma)}(\omega_{\tilde{\gamma}}).\]
According to the analysis of \S\ref{sec:neumann} for the function $\zeta_\mathrm N(s)$, one deduces 
\[\frac{\dd}{\dd s} \log \zeta_1(s) = \sum_{\gamma \in {\mathcal P}} \frac{\tau(\gamma^{\sharp}) \e^{-s \tau(\gamma)}}{|\det(\mathrm{Id}- P_{\gamma})|^{1/2}}= \eta_{\mathrm N}(s),\quad \Re s \gg 1.\]
Similarly,  the argument of \S\ref{sec:dir} implies
\[ \frac{\dd}{\dd s} \log (\zeta_2(s)^2) = 2 \sum_{\substack{\gamma \in {\mathcal P}\\ m(\gamma) \in 2 \N}}  \frac{\tau(\gamma^{\sharp}) \e^{-s \tau(\gamma)}}{|\det(\mathrm{Id}- P_{\gamma})|^{1/2}} = 2 \eta_2(s),\quad \Re s \gg 1.\]
Consequently, the representation (\ref{eq:4.1}) yields
\begin{equation} \label{eq:5.1} 
\eta_{\mathrm D}(s) = \frac{\dd}{\dd s} \log \Bigl(\frac{ \zeta_2(s)^2 }{\zeta_1(s)}\Bigr),\quad \Re s \gg 1.
\end{equation}
For obstacles with real analytic boundary the flow $\wt \varphi_t$ is real analytic and the bundles $\ec^{\pm}_q$ are real analytic, too.
 
 For convenience of the reader, we recall the definition of the order of a function $f$ meromorphic on the complex plane (see for instance \cite{Hayman1964functions}). For $r \geqslant 0$, denote by  $n(r, f)$ the number of poles of $f$ in the disk $\{|z|\leqslant r\}$ counted with their multiplicity. Introduce the 
(Nevalinna) counting function
\[N(r, f) = \int_0^r \frac{ n(t, f) - n(0, f)}{t} \dd t + n(0, f) \log r.\]
 Let $\log^+: \R \rightarrow \R^+$ be the function defined by
\[\log^+ x = \left\{\begin{matrix} \log x & \text{if} & x \geqslant 1,\\
0 & \text{if} & x \leqslant 1. \end{matrix}\right.\]
The proximity function $m(r, f)$ is defined by
\[m(r, f) = \frac{1}{2\pi}\int_0^{2\pi} \log^+ |f(r \e^{i \theta})| \dd \theta,\]
assuming that $f(z)$ has no poles for $|z| = r.$
Then $T(r, f) = N(r, f) + m(r, f)$ is called the (Nevalinna) characteristic of $f$. Finally, the order $\rho(f)$ of $f$ is defined by
\[\rho(f) = \limsup_{r \to \infty} \frac{ \log^+ T(r, f)}{\log r}.\]

We are now in position to apply the principal result of Fried \cite[Theorem on p. 180, see also pp. 177--178]{fried1995meromorphic} saying that the zeta functions $s \mapsto \zeta_{k}(s),\: k = 1, 2,$ are entire functions with finite orders $\rho(\zeta_k)$.
Thus $\zeta_2^2 / \zeta_1$ is a meromorphic function with order {$\max\{\rho(\zeta_1), \rho(\zeta_2)\}.$}

\begin{proof}[Proof that $\eta_D(s)$ is not an entire function]
We will show that the Dirichlet series $\eta_\mathrm D(s)$ cannot be continued as an entire function to $\mathbb C$, that is, $\eta_\mathrm D(s)$ has at least one pole. We proceed by contradiction and assume that $\eta_\mathrm D(s)$ is an entire function.  
Applying the representation (\ref{eq:5.1}), this means that $\zeta_2(s)^2/\zeta_1(s)$ has neither poles nor zeros.  As we have mentioned above, this function has finite order, so by the Hadamard factorisation theorem we deduce that $ \zeta_2(s)^2/ \zeta_1(s)= \exp(Q(s))$ for some polynomial $Q(s)$. This implies that $\eta_{\mathrm D}(s) = Q'(s)$ is a polynomial, which is impossible. Indeed, since $\eta_{\mathrm D}(s) \to 0$ as $\Re(s) \to +\infty$, this implies that $Q'(s)$ must be the zero polynomial.  By uniqueness of the development of  Dirichlet series of the form $\sum_n a_n \e^{-\lambda_n s}$ \cite{perron1908theorie}  absolutely convergent for $\Re s \geq s_0$, this leads to a contradiction.
\end{proof}

\appendix

\def\ts{\tilde{\sigma}}
\def\p{{\mathcal P}}
\def\h{{\mathcal H}}
\def\pa{\partial}
\def\om{\omega}
\def\ep{\varepsilon} 
\def\lr{\longrightarrow}

\section{Hyperbolicity of the billiard flow}

In this appendix we show that the non-grazing flow $\phi_t$ defined in \S\ref{subsec:defflow} in Euclidean metric is uniformly hyperbolic on the trapped set $K_e.$ Throughout this section  we work with the Euclidean metric. As it was mentioned in $\S\ref{subsec:anosov}$, we can obtain the uniform hyperbolicity of the flow $\varphi_t$ on $K$ in the smooth model from that for $\phi_t$ on $\mathring B \cap K_e.$ The flow $\phi_t$ is hyperbolic on $\mathring B \cap K_e$ if for every $z = (x, v) \in \mathring {B} \cap K_e$ we have a splitting
\[T_z \R^d = \R X(z) \oplus E_s(z) \oplus E_u(z),\]
where $X(z) = v$ and $E_s(z)/E_u(z)$ are stable/unstable spaces such that $\dd \phi_t(z)$ maps $E_{s/u} (z)$ onto $E_{s/u}(\phi_t(z))$ whenever $\phi_t(z) \in \mathring{B} \cap K_e$,  and 
if for some constants $C > 0, \nu > 0$ independent of $z \in K_e$, we have
\begin{equation}\label{eq:anosovest}
\|\dd \phi_t(z) \cdot v\| \leqslant  \begin{cases} C \e^{-\nu t} \|v\|,~ &v \in E_s(z),~t\geqslant 0, \vspace{0.2cm}  \\  C \e^{-\nu |t|} \|v\|,~ &v \in E_u(z),~t\leqslant 0. \end{cases} 
\end{equation}

First, we consider the case of periodic points. Our purpose is to define the unstable and stable manifolds $E_u(z)$ and $E_s(z)$ at a periodic point $z \in \mathring B \cap K_e$, and to estimate the norm of $\dd \phi_t(z)\vert_{E_b(z)}$ for $b = u, s.$ Consider a periodic ray $\gamma$ with reflection points $z_i =(q_i, \omega_i),\: q_i \in \partial D, \: \omega_i \in {\mathbb S}^{d-1},\: i = 0,\dots,m(\gamma) = m$ with period $T > 0.$
Let $\pi_x$ be the projection $\pi_x: (t, x, \tau, \xi) \ni T^*(\R \times \Omega) \rightarrow x \in \Omega$ and let $\tilde{\gamma} \subset T^*(\R \times \Omega)$ be the generalized bicharacteristic of the wave operator $\partial_t^2 - \Delta_x$ for which $\pi_x(\tilde{\gamma}) = \gamma$ (see Section 1.2 in \cite{petkov2017geometry} for the definition of generalized bicharacteristics).
Let $\rho = (x, \xi) \in \tilde{\gamma}\cap \Bigl(T^*(\mathring \Omega) \setminus \{0\}\Bigr)$ be such that $\pi_x(\rho) \neq q_i,\:i = 0,\dots, m.$ Then the flow $\phi_T$ maps a small conic neighborhood ${\mathcal V} \subset T^*(\mathring \Omega) \setminus \{0\}$ of $\rho$ to a conic neighborhood ${\mathcal W} \subset T^*(\mathring \Omega)\setminus \{0\}$ of $\rho$ and 
\[ \dd \phi_T(\rho): T_{\rho}(T^*(\mathring\Omega)) \rightarrow  T_{\rho}(T^*(\mathring\Omega)). \]
The tangent vector $\zeta$ to $\tilde{\gamma}$ at $\rho$ and the cone axis $\eta = \{t\xi, \: t > 0\}$ are invariant with respect to $\dd \phi_T(\rho)$ and we define
the quotient $\Sigma_{\rho} = T_{\rho}(T^*(\mathring\Omega))/E_{\rho},\: E_{\rho}$ being the two dimensional space spanned by $\zeta$ and $\eta$. Then 
\[ P_{\gamma}(\rho)  = \dd\phi_T(\rho) \vert_{\Sigma_{\rho}}\]
is the linear Poincaré map corresponding to $\gamma$ at $\rho$. It is easy to see that if $\mu \in \tilde{\gamma} \cap \Bigl(T^*(\mathring \Omega) \setminus \{0\}\Bigr)$ is another periodic point, the maps $P_{\gamma}(\rho)$ and  $P_{\gamma}(\mu)$ are conjugated and the eigenvalues of $P_{\gamma}(\rho)$ are independent of the choice of $\rho.$

Recall the billiard ball map ${\bf B}$ introduced in \S\ref{subsec:anosov}. The advantage is that ${\bf B}$ is smooth (see \cite{kovachev1988billard}). We will apply the representation of the Poincar\'e map for billiard ball map ${\bf B}$ established in Theorem 2.3.1 and Proposition 2.3.2 in \cite{petkov2017geometry}. To do this, we recall some notations given in Section 2 of \cite{petkov2017geometry}. Let $\Pi_i \subset \R^d$ be the plane passing through $q_i$ and orthogonal to $\omega_i$ and let $\Pi_i'$ be the plane passing through $q_i$ and orthogonal to $\omega_{i-1}.$ For $j = i \: ({\rm mod}\: m)$ we set $\Pi_j = \Pi_i,\: q_j = q_i.$ Set $\lambda_i = \|q_{i-1} - q_i\|$ and let $\sigma_i$ be the symmetry with respect to the tangent plane $\alpha_i = T_{q_i}(\partial D)$.  (If $u = u_t + u_{n} \in {\mathbb S}^{d-1}$ with $u_t \in \alpha_i, u_n \perp \alpha_i$, then $\sigma_i(u) = u_t - u_{n}.$) Clearly,
 \[\sigma_i(\omega_i) = \omega_{i+1},\quad \sigma_i(\Pi_i') = \Pi_i, \quad \Pi_0 = \Pi_m.\]
 We identify  $\Pi_{i-1 }$ and $\Pi_i'$ by using a translation along the line determined by the segment $[q_{i-1}, q_i]$ and we will write $\sigma_i(\Pi_{i-1}) = \Pi_i.$ 
   
Given $(u, v) \in \Pi_{i -1}\times \Pi_{i-1}$ sufficiently close to $(0, 0)$, consider the line $\ell(u, v)$ passing through $q_{i-1} +  u$ and having direction $\omega_{i- 1} + v$ (the point $v$ is identified with the vector $v$). Then $\ell(u, v)$ intersects $\partial D$ at a point $p= p(u, v)$ close to $q_i.$ Let $\ell'(u, v)$ be the line symmetric to $\ell(u, v)$ with respect to the tangent plane to $\partial D$ at $p$ and let $q_i + u' \in \Pi_{i}$ be the intersection point of $\ell'(u, v)$ with $\Pi_{i}$. There exists an unique $v'\in \Pi_{i}$ for which $\omega_{i} + v'$ has the direction of $\ell'(u, v).$ Thus we get a map
\[\Psi_{i} :  \Pi_{i- 1} \times \Pi_{i -1}\ni (u, v) \longmapsto (u', v') \in \Pi_{i} \times \Pi_{i}\]
defined for $(u, v)$ in a small neighborhood of $(0, 0)$ (see Figure \ref{fig:Psi}).
\begin{figure} 
\includegraphics{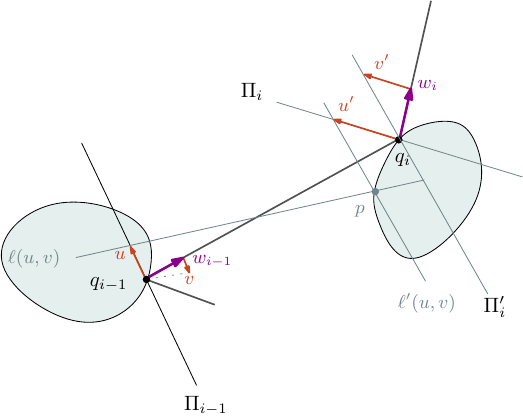}
\caption{The map $\Psi_i : (u,v) \mapsto (u',v')$}
\label{fig:Psi}
\end{figure}
The smoothness of the billiard ball map $\bf B$  implies the smoothness of $\Psi_i.$ 
Next consider the second fundamental form $S(\xi, \eta) = \langle G_i(\xi), \eta\rangle$ for $D$ at $q_i$, where
\[G_i = \dd n_j(q_i) : \alpha_i \longrightarrow \alpha_i\]
is the Gauss map.  Here recall that $n_j(q)$ is the inward unit normal vector to $\partial D_j$ at $q$ pointing into $D_j$. Introduce a symmetric linear map $\wt \psi_i$ on $\Pi_i$ defined for $\xi, \eta \in \Pi_i'$ by
\[\langle \wt \psi_i \sigma_i(\xi), \sigma_i(\eta)\rangle  = - 2 \langle\omega_{i-1}, n_j(q_i)\rangle \langle G_i(\pi_i(\xi)), \pi_i(\eta)\rangle,\]
where $\langle .  , . \rangle$ denotes the scalar product in $\R^d$, $\pi_i: \Pi_i' \longrightarrow \alpha_i$ is the projection on $\alpha_i$ along $\R \omega_{i-1}$.

Notice that the non-eclipse condition \eqref{eq:1.1} implies that there exists  $\beta_0 \in \left]0, \pi/2\right[$ depending only on $D$ such that for all incoming directions $\omega_{i- 1}$ and all reflection points $q_i \in \partial D_j$, one has
\[-\langle \omega_{i-1}, n_j(q_i)\rangle  = \langle \omega_i, n_j(q_i)\rangle \geqslant \cos  \beta_0 > 0.\]
Consequently, the symmetric map $\wt \psi_i$ has spectrum included in $[\mu_1, \mu_2]$ with $0 < \mu_1 < \mu_2$ depending only on $\kappa = \cos \beta_0$ and the sectional curvatures of $\partial D.$ Finally, define the symmetric map
\[\psi_i = s_i^{-1} \wt \psi_i s_i: \: \Pi_m \longrightarrow \Pi_m\]
with $s_i = \sigma_i \circ \sigma_{i-1} \circ \cdots \circ \sigma_1.$ 
By Theorem 2.3.1 in \cite{petkov2017geometry}, the map $\dd \Psi_i(0, 0)$ has the form
\[\dd\Psi_i(0, 0) = \begin{pmatrix} I  & \lambda_i I\\
\tilde{\psi}_i & I + \lambda_i \tilde{\psi}_i \end{pmatrix} \begin{pmatrix} \sigma_i & 0\\ 0 & \sigma_i \end{pmatrix},\]
and the linearized Poincar\'e map $P_{\gamma}$ related to $\gamma$ is given by
\[
P_{\gamma} = \dd (\Psi_m \circ \cdots \circ \Psi_1)(0,0):  \Pi_0 \times \Pi_0 \longrightarrow \Pi_0 \times \Pi_0,
\] 
which implies 
\[P_{\gamma} =  \begin{pmatrix} s_m  & 0 \\0 & s_m\end{pmatrix}  \begin{pmatrix} I && \lambda_m I\\ \psi_m && I + \lambda_m \psi_m \end{pmatrix}  \cdots \begin{pmatrix} I && \lambda_1 I\\ \psi_1 && I + \lambda_1 \psi_1 \end{pmatrix}.\]
Here the space $\Pi_0 \times \Pi_0$ is identified with the space $T_{\rho}(T^*(\Omega))/E_{\rho},$ where $\pi_x(\rho) = (q_0, \omega_0).$

Now we repeat without changes the argument of Proposition 2.3.2 in \cite{petkov2017geometry}.  For $k = 0, 1,\dots,m,$ consider the space  ${\mathcal M}_k$  of linear symmetric non-negative definite maps $M: \: \Pi_k \longrightarrow \Pi_k.$ Next, let ${\mathcal M}_k(\varepsilon) \subset {\mathcal M}_k$ be the space of maps such that $M \geqslant \varepsilon I$ with $\varepsilon > 0.$ To study the spectrum of $P_{\gamma}$, consider the subspace
\[L_0 = \{(u, M_0 u):\: u \in \Pi_0\}, \quad M_0 \in {\mathcal M}_0,\] 
which is Lagrangian with respect to the symplectic structure on $\Pi_0 \times \Pi_0$ induced from the symplectic structure on $T_{\rho}(T^*(\Omega))$ by the factorisation with $E_{\rho}.$
 The action of the map $\dd\Psi_1(0,0)$, transforms the Lagrangian space $L_0$ into 
 \[L_1 = \{ \sigma_1(I + \lambda_1 M_0) u , \sigma_1( ( I + \lambda_1 \psi_1) M_0 + \psi_1)u: \: u \in \Pi_0\} \subset \Pi_1 \times \Pi_1.\]
 Introducing the operator
 \[{\mathcal A}_i : {\mathcal M}_{i-1} \longrightarrow {\mathcal M}_i\]
 defined by 
 \[{\mathcal A}_i (M) = \sigma_iM ( I + \lambda_i M)^{-1}\sigma_i^{-1}  + \tilde{\psi}_i,\]
 we write $L_1 = \{(u, M_1 u):\: u \in \Pi_1\}$ with $M_1 = {\mathcal A}_1(M_0).$ By recurrence, define
 \[L_k = \{(u, M_k u):\: u \in \Pi_k\},\quad M_k = {\mathcal A}_k(M_{k-1}), \quad k = 1, 2,\dots,m.\]
 The maps ${\mathcal A}_k$ are contractions from ${\mathcal M}_{k-1}(\varepsilon)$ to ${\mathcal M}_k(\varepsilon)$, hence
 \[{\mathcal A} = {\mathcal A}_m \circ \cdots \circ {\mathcal A}_1\]
becomes  also a contraction from ${\mathcal M}_0(\varepsilon)$ to ${\mathcal M}_0(\varepsilon).$ We choose $M_0 \in {\mathcal M}_0(\varepsilon)$ as a {\it fixed point} of ${\mathcal A}$ and thus we fix $L_0$. Notice that $\varepsilon > 0$ can be chosen uniformly for all periodic rays. 
Thus we deduce
\[P_{\gamma} \begin{pmatrix} u \\ M_0 u \end{pmatrix} = \begin{pmatrix} Su \\ M_0 Su \end{pmatrix}\]
with a map $S: \Pi_0 \longrightarrow \Pi_0$ having the form
\[S = \sigma_m(I + \lambda_m {\mathcal A}'_{m-1} (M_0)) \circ \sigma_{m-1} ( I + \lambda_{m-1} {\mathcal A}'_{m-2} (M_0)) \circ \cdots\circ \sigma_1( I + \lambda_1 M_0),\]
where ${\mathcal A}'_k = {\mathcal A}_k\circ {\mathcal A}_{k-1} \circ \cdots \circ {\mathcal A}_1.$ Setting 
\[d_0 = \min_{i \neq j} {\rm dist}\: (D_i , D_j) > 0,\quad d_1 = \max_{i \neq j} {\rm dist}\: (D_i , D_j),\]
 and $\beta = \log ( 1 + \varepsilon d_0),$  one obtains
\[\|Su\| \geqslant ( 1 + d_0 \varepsilon)^m \|u\| = \e^{\beta m} \|u\|.\]
Obviously, the eigenvalues of $S$ are eigenvalues of $P_{\gamma}$ and we conclude that $P_{\gamma}$ has $(d- 1)$ eigenvalues $\nu_1,\dots,\nu_{d-1}$ satisfying
\[|\nu_j| \geqslant \e^{\beta m},\quad j = 1, \dots , d-1.\]

For  $0 < \tau< \lambda_1$, consider a point $\rho = \phi_{\tau}(z) \in \mathring{B} \cap \gamma,$  where $z = (x, v) \in \cal D_{\mathrm{in}}.$ The map $\phi_{\tau}: \cal D_{\mathrm{in}} \rightarrow \mathring{B}$ is smooth near $z$ and  moreover $\dd\phi_{\tau}(z): \Sigma_{z} \to \Sigma_{\rho}$.
We identify $\Pi_0 \times \Pi_0$ with $\Sigma_z$ and $\Sigma_{\phi_{\tau}(z)}$ with the image 
\[\dd \phi_{\tau}(z) \Sigma_z = \begin{pmatrix}  I & \tau I\\0 & I\end{pmatrix} (\Pi_0 \times \Pi_0).\] 
Next we  define the unstable subspace of $\Sigma_{\rho}$  as 
\[E_u(\rho) =\dd \phi_{\tau} (z)(L_0) = \begin{pmatrix}  I & \tau I\\0 & I\end{pmatrix} (L_0).\] 
Let  $0 < \sigma < \lambda_{p+1}$  with $p \geqslant 1$ and  set $t = -\tau + \sum_{j= 1}^p \lambda_j  + \sigma$. 
Then $\phi_{-\tau}$ is smooth near $\rho,$  the map ${\bf B}^p$  is smooth and
\[
\dd \phi_t(\rho)|_{\Sigma_\rho} = \dd \phi_\sigma(\mathbf{B}^p(z)) \circ \dd \mathbf{B}^p(z) \circ \dd \phi_{- \tau}(\rho) : \Sigma_{\rho} \to \Sigma_{\phi_t(\rho)}.
\]
This is illustrated by the diagram
\[ \begin{CD} 
@. E_u(\rho) @> \dd \phi_t(\rho) >>  E_u( \phi_t(\rho))\\
@ . @VV \dd\phi_{-\tau}(\rho)V         @AA \dd \phi_{\sigma} ({\bf B}^p(z))A @. \\
\Pi_0 @ > \chi_0 >>  L_0 @> \dd {\bf B}^p (z)>> L_p @< \chi_p<< \Pi_p,
\end{CD}\]
where $\chi_0: \Pi_0 \ni u \mapsto (u,M_0 u) \in L_0 \subset \Pi_0 \times \Pi_0$ and $\chi_p: \Pi_p \ni u \mapsto (u, M_p u) \in L_p \subset \Pi_p \times \Pi_p.$
It is easy to obtain an estimate of the action of $\dd \phi_t(\rho)\vert_{E_u(\rho)} $ for 
 $\rho = \phi_{\tau}(z),\:v = \dd \phi_{\tau}(z) (u, M_0 u) \in E_u(\rho)$. Clearly,
\[\dd \phi_{t}(\rho) \cdot v= (\dd \phi_{\sigma} ({\bf B}^p(z)) \circ \dd {\bf B}^p (z)) (u, M_0 u).\]
By the above argument we deduce
\[\dd {\bf B}^p (z) (u, M_0 u ) =  ( S_p  u, M_p S_p u) \in L_p\]
with
\[S_p =  \sigma_p(I + \lambda_p {\mathcal A}'_{p-1} (M_0)) \circ \sigma_{p-1} ( I + \lambda_{p-1} {\mathcal A}'_{p-2} (M_0)) \circ \cdots \circ \sigma_1( I + \lambda_1 M_0).\]
Setting $\beta_0 = \beta / d_1$ and $w = (u, M_0 u) = \dd \phi_{-\tau} (\rho)  \cdot v,$ we have
 \begin{equation}
 \label{eq:A1}
\|\dd {\bf B}^p(z) \cdot w\| = \|(S_p u, M_p S_pu)\| \geqslant \|S_p u\| \geqslant \e^{\frac{\beta}{d_1} p d_1} \|u\| \geqslant \e^{\beta_0 (t + \tau - \sigma)} \|u\|,
\end{equation}
 and
\begin{equation} \label{eq:A1bis}
\|\dd {\bf B}^p(z) \cdot w\| \leqslant C_0 \e^{-\beta_0 d_1}e^{\beta_0 t} \|w\| = C_0 \e^{-\beta_0 d_1}e^{\beta_0 t} \|\dd \phi_{-\tau}(\rho) v\|.
\end{equation}
Here we used the estimate
 \[\|w\| = \Bigl( \|u\|^2 + \|M_0 u\|^2\Bigr)^{1/2} \leqslant (1 + B_0^2)^{ 1/2}\|u\|\]
 with $\|M_0\|_{\Pi_0 \to \Pi_0} \leqslant B_0$ and we set $C_0 = (1 + B_0^2)^{-1/2}.$ The constant $B_0$ can be chosen uniformly for all $M_k$ and all periodic points since for every non-negative symmetric map $M$ one has
\[\|M(I + \lambda_k M)^{-1} \| \leqslant\frac{1}{\lambda_k} \leqslant \frac{1}{d_0},\]
while the norms $\|\wt \psi_k\|$ are uniformly bounded by a constant depending of the sectional curvatures of $D$ and $\kappa > 0$. Consequently,
\begin{equation} \label{eq:A2} 
\|{\mathcal A}_k(M) \| \leqslant B_0,
\end{equation} 
 the same is true for the norm of the fixed point $M_0 = {\mathcal A}_m(M_{m-1})$
and the  estimate (\ref{eq:A2}) is uniform for all periodic points. Finally, estimating the norm of $\dd \phi_{-\sigma}({\bf B}^p(z))  = \begin{pmatrix} I & -\sigma I\\0 & I \end{pmatrix}$, we obtain 
$\| \dd \phi_{-\sigma} ({\bf B}^p (z) ) \zeta\| \geqslant ( 1 + d_1)^{-1} \|\zeta\|$ and 
 \begin{equation}\label{eq:A3}
 \begin{aligned}
 \|\dd \phi_{t}(\rho) v\| &\geqslant  ( 1 + d_1)^{-1}  C_0e^{- \beta_ 0 d_1} \e^{\beta_0 t} \|\dd \phi_{-\tau}(\rho) v\| \nonumber \\ &\geqslant (1 + d_1) ^{-2} C_0e^{-\beta_0 d_1} \e^{\beta_0 t}\|v\|.
 \end{aligned}
 \end{equation}  
It remains to treat the case $\rho = \phi_{\tau}(z),\: z \in \cal D_{in},\: 0 < t = \tau + \sigma < \lambda_1.$ Then  $\phi_t(\rho) = \phi_{\tau + \sigma}(z) \in \mathring{B} \cap \gamma$ and we obtain easily an estimate for $\|(\dd \phi_{\tau + \sigma}(z)) \cdot v\|$. 

 Our case is a partial one of a more general setting (see \cite{Liverani1994sympl}) concerning Lagrangian spaces $\{(u, Mu)\}$ with positive definite linear maps $M$. Such spaces are called positive Lagrangian. A linear symplectic map $L$ is called monotone if it maps positive Lagrangian onto positive Lagrangian. In \cite{Liverani1994sympl} it is proved that any monotone symplectic map is a contraction on the manifold of positive Lagrangian spaces. After a suitable conjugation the map $L$ has the representation (see Proposition 3 in \cite{Liverani1994sympl}) 
 \[L = \begin{pmatrix} A^{-1} & 0 \\ 0 & A^*\end{pmatrix}  \begin{pmatrix} I & & R\\ P & & I + PR\end{pmatrix}\]
 with positive definite matrices $P, \: R$. In our situation we have $A = I, R = \lambda_i I, \: P = \psi_i.$
 
 To determine the stable space $E_s(z)$ at $z$,  we will study the flow $\phi_t$ for $t < 0$ and repeat the above argument leading to a fixed point. For completeness we present some details.
      The linear map $ P_{\gamma}^{-1} $ for a periodic ray $\gamma$ with $m$ reflections has the representation 
      \[P_{\gamma}^{-1} = (\dd \Psi_1)^{-1} \circ \cdots \circ (\dd \Psi_m)^{-1} : \: \Pi_0 \times \Pi_0 \longrightarrow \Pi_0 \times \Pi_0,\]
where 
\[(\dd \Psi_k)^{-1}= \begin{pmatrix} \sigma_k^{-1} & 0\\
0 & \sigma_k^{-1} \end{pmatrix} \begin{pmatrix} I + \lambda_k \psi_k & & - \lambda_k I\\
- \psi_k & & I \end{pmatrix}.\]     
 Recall that $\Pi_0 = \Pi_m.$ Consider a Lagrangian $Q_0 = Q_m  = \{(u, - N_m u):\: u \in \Pi_0\}$ with a symmetric non-negative definite map $N_m \in {\mathcal M}_0.$  Then
 \[
 \begin{aligned}
( \dd \Psi_m)^{-1} Q_{m} &= \left\{ \left(\sigma_m^{-1} (I + \lambda_m (\psi_m + N_m) ) u, - \sigma_m^{-1} (\psi_m + N_m) u\right)~:~ u \in \Pi_{0}\right\} \\
&= \{( u, -N_{m-1} u): \: u \in \Pi_{m-1}\},
\end{aligned}
\]
 where
\[ N_{m-1} = \sigma_m^{-1} (\psi_m + N_m) \Bigl( I + \lambda_m(\psi_m + N_m)\Bigr)^{-1}\sigma_m:\: \Pi_{m-1} \longrightarrow \Pi_{m-1}.\]     
By recurrence, introduce the Lagrangian spaces 
\[Q_k = \{(u, - N_k u):  u \in \Pi_k\}, \quad N_k =  \bc_k(N_{k+1}),\quad k = 0, \dots,m-1,\] 
where
\[\bc_k(M) = \sigma_{k + 1}^{-1} (\psi_{k+1} + M) \Bigl( I + \lambda_{k+1} (\psi_{k+1} + M) \Bigr)^{-1}\sigma_{k+1}:\: \Pi_k \longrightarrow \Pi_k.\]
It is easy to see that $\bc_{k}$ are contractions from ${\mathcal M}_{k+1}(\ep)$ to ${\mathcal M}_k(\ep)$ since
\[
\begin{aligned}
&\sigma_{k+1} \Bigl(\bc_k(M_1) - \bc_k(M_2) \Bigr) \sigma_{k+1}^{-1} \\
&\qquad \qquad = (I + \lambda_{k+1} (\psi_{k+1} + M_1))^{-1} (M_ 1- M_2)(I + \lambda_{k+1} (\psi_{k+1} + M_2))^{-1}.
\end{aligned}
\]
Therefore, $\bc = \bc_0 \circ \cdots \circ \bc_{m-1}$ will be contraction from ${\mathcal M}_0(\ep)$ to ${\mathcal M}_0(\ep)$ and there exists a {\it fixed point} $N_m \in {\mathcal M}_0(\ep)$ of $\bc.$ Moreover,
\[P_{\gamma}^{-1} \begin{pmatrix} u \\ -N_m u\end{pmatrix} = \begin{pmatrix} \wt{S} u\\ - N_m \wt{S} u\end{pmatrix},\quad u \in \Pi_0,\]
where
\[
\begin{aligned}
\wt{S} = &\sigma_1^{-1} ( I + \lambda_1 (\psi_1 + \bc_1'(N_m)) )\circ \sigma_2^{-1} ( I + \lambda_2(\psi_2 +\bc_2'(N_m))) \\
& \hspace{7cm} \circ \cdots \circ \sigma_m^{-1} (I + \lambda_m (\psi_m + N_m) )
\end{aligned}
\]
and $\bc_k' = \bc_k\circ \cdots \circ \bc_{m-1},\: k = 1,\dots,m-1.$ Clearly,
\[\|\wt{S} u \|\geqslant  (1 + d_0 \ep)^m \|u\|,\: u \in \Pi_0,\]
where $\ep > 0$ depends of the sectional curvatures of $D.$ Thus the stable manifold at $\phi_{\sigma} (z), -\lambda_{m-1} < \sigma < 0$ can be defined as $E_s(\phi_\sigma (z)) = \dd \phi_{\sigma} (z) (Q_m)$ and we may repeat the above argument for the estimate of $\dd\phi_{t}(\phi_{\sigma}(z))$ acting on $E_s(\phi_{\sigma}(z))$ for $t < 0.$ 

The intersection of the unstable and stable manifolds at $y = \phi_t(z),\: 0 < t < \lambda_p$ is $ (0, 0)$. Indeed, we have
\[E_u(y ) = \dd \phi_t (z)(L_{p-1}),\quad E_s(y) = \dd \phi_{t - \lambda_p}(\phi_{\lambda_p}(z)) (Q_{p}),\]
where 
$
L_{p-1} = \{(u, M_{p-1}u):\: u \in \Pi_{p-1} \times \Pi_{p-1}\}$ and  $Q_p = \{ (- u, - N_{p} u):\: u \in \Pi_p \times \Pi_p\}.
$
Assume that $E_u(y) \cap E_s(y) \neq (0, 0).$ Then there exists $0 \neq v \in L_{p-1} \cap \dd \phi_{-\lambda_p} (\phi_{\lambda_p}(z))(Q_p).$
 By the above argument
$\dd \phi_{-\lambda_p} (\phi_{\lambda_p}(z))(Q_p) = \{(u, - N_{p-1} u) : u \in \Pi_{p-1} \times \Pi_{p-1}\}.$ This implies the existence of $u \neq 0$ for which
$(M_{p-1} + N_{p-1}) u = 0$ which is impossible since $M_{p-1} + N_{p-1}$ is a definite positive map.
Consequently, $E_u(y)$ and $E_s(y)$ are transversal subspaces of dimension $d-1$ of $\Sigma_{y}$ and we have a direct sum
$\Sigma_{y} = E_u(y) \oplus E_s(y).$
 
     Now we pass to the  estimates of $\dd \phi_t(z)\vert_{E_u(z)}$, where $z \in \mathring{B} \cap K_e$ is not a periodic point. Since $z \in K_e $, the trajectory $\gamma = \{\phi_t (z): t \in \R\}$ has infinite number successive reflection points $q_k \in \partial D_{i_k}, \: k \in \Z,$ with an infinite sequence
     \[J_0 = (i_j)_{j \in \Z},\quad i_j \neq i_{j+ 1} .\]
     For every $p \geqslant p_0 \gg 1$ define the configuration
     \[
     \alpha_p = \left\lbrace \begin{array}{ll} (i_{-p}, \dots,i_0, \dots, i_p) &\text { if } i_p \neq i_{-p}, \\ (i_{-p}, \dots, i_0, \dots, i_{p+1}) &\text { if } i_p = i_{-p}.\end{array} \right.
     \]
 Repeating $\alpha_p$ infinite times, one obtains an infinite configuration. Following the arguments of the proof of Proposition 10.3.2 in \cite{petkov2017geometry}, there exists a periodic ray $\gamma_p$ following this configuration and we obtain a sequence of periodic rays $(\gamma_{p_0 + k})_{k \geqslant 0}$.  
  Let 
 $\{q_{p, k} \in \partial D_{i_k}\}$  be the reflexion points of $\gamma_p$. For the periodic ray $\gamma_p$ passing through $q_{p, 0} \in \partial D_{i_0}$ consider the linear space 
  \[L_{p, 0} = \{(u, M_{p, 0} u):\: u \in \Pi_{p,0}\} \subset \Pi_{p, 0} \times \Pi_{p, 0}.\]
    Our purpose is to show that the symmetric linear maps $M_{p, 0} \in {\mathcal M}_{p,0} (\varepsilon)$ composed by some unitary maps converge as $ p \to \infty$ to a symmetric linear map $\wt {M}_0 \in {\mathcal M}_{0}(\varepsilon)$ on $\Pi_0$. This composition is necessary since the maps $M_{p, 0}$,~ $p \geqslant p_0,$ are defined on different spaces.  To do this, we will use Lemmas 10.2.1, 10.4.1 and 10.4.2 in \cite{petkov2017geometry}. Consider the rays $\gamma_{p_0 + q},\: q \geqslant 1,$ and $\gamma$. These rays have reflection points passing successively through the obstacles
    \[L'= D_{i_{-p_0 - 1}}, D_{i_{-p_0}},\dots, D_{i_0},\dots,D_{i_{p_0}}, D_{i_{p_0 + 1}} = L''.\]
 According to Lemma 10.2.1 in \cite{petkov2017geometry}, there exist  
     uniform constants $C > 0$ and $\delta \in (0, 1)$ such that for any $|k|\leqslant p_0$ and $j = 1, \dots, q$, one has
    \[
    \begin{aligned}
    \|q_{p_0 + 1, k} - q_{p_0 + j,k}\| &\leqslant C(\delta^{p_0 + k}  + \delta^{p_0 - k}), \quad 
    \|q_{p_0 + j, k} - q_k\| &\leqslant C(\delta^{p_0 + k}  + \delta^{p_0 - k}).
    \end{aligned}
    \]
We need to introduce some notations from \cite[Section 10.4]{petkov2017geometry}. Let $x \in \partial D_i$ and $y \in \partial D_j$ with $i \neq j,$ and assume that the segment $[x, y]$ is transversal to both $\partial D_i$ and $\partial D_j$. Let $\Pi$ be the plane orthogonal to $[x, y]$, passing through $x$. Let $ e = (x-y)/\|x - y\|,$ and introduce the projection $\pi: \Pi \longrightarrow T_x(\partial D)$ along the vector $e.$ As above, we define the symmetric linear map $\wt \psi : \Pi \rightarrow \Pi$ by
\[\langle \wt \psi (u), u \rangle = 2 \langle e, n(x)\rangle \langle G_x (\pi (u)), \pi(u)\rangle,\quad u \in \Pi,\]
and notice that
\[{\rm spec}\: \wt \psi \subset [ \mu_1,  \mu_2 ],\quad 0 < \mu_1 < \mu_2.\]
Setting $D_0 = 2 C$, we have the estimates
\[ \|q_{p_0 + j, k} - q_k\| \leqslant D_0 \delta^{p_0 +k},\quad k = -p_0+ 1,\dots, 0, \quad j = 1,\dots,q.\]
Fix $1 \leqslant j \leqslant q$ and introduce the vectors
\[ e_k =\frac{q_{k+1} - q_k}{\|q_{k+1} - q_k\|} ,\quad e_k'= \frac{q_{p_0 + j, k + 1}- q_{p_0 + j, k}}{\|q_{p_0 + j, k + 1}- q_{p_0 + j, k}\|}.\]
Consider the maps $\wt \psi_k : \Pi_k \longrightarrow \Pi_k$ and $\wt \psi'_{ k}:\Pi_{k}' \longrightarrow \Pi_k'$
related to the segments $[q_{k-1}, q_k]$ and $[q_{p_0 + j, k-1}, q_{p_0 +j, k}],$ respectively. Let $M_{-p_0 + 1}: \Pi_{-p_0 + 1} \lr \Pi_{-p_0 + 1}$ and $M_{-p_0 + j}': \Pi_{-p_0 + j}' \lr \Pi_{-p_0 + j}'$ be symmetric non-negative definite linear operators. By recurrence, define
\[M_k = \sigma_k M_{k-1} ( I + \lambda_k M_{k-1})^{-1} \sigma_k + \wt \psi_k, \quad k = -p_0+2,\dots,0,\]
where $\lambda_k = \|q_{k-1} - q_k\|$ and $\sigma_k$ is the symmetry with respect to $T_{q_k}(\partial D).$ Similarly, we define $M_k', \: k =-p_0 + 2,\dots,0,$ replacing $\wt \psi_k, \lambda_k$ and $\sigma_k$ by $\wt \psi_{k}', \:\lambda_{p_0 +j, k} = \|q_{p_0 + j, k-1} - q_{p_0 + j, k}\|$ and $\sigma_k'$, respectively. Next, introduce the constants
\[b = ( 1 + 2 \mu_1 \kappa d_0)^{-1} < 1, \quad a_1 = \max\{\delta, b\} < 1,\]
where $d_0 > 0$ and $\kappa > 0$ were defined above. 
We choose $M_{-p_0 + 1}$ so that $\|M_{-p_0 + 1} \| \leqslant B_0$ and by induction one deduces $\|M_k\| \leqslant B_0.$ Here $B_0 > 0$ is the constant in  (\ref{eq:A2}). We have uniform estimates
 \begin{equation} \label{eq:A4}
\|M_k\| \leqslant B_0, \quad  \|M_k'\| \leqslant B_0, \quad k = -p_0 + 1,\dots,0.
\end{equation} 
 Applying \cite[Lemma 10.4.1]{petkov2017geometry}, there exists a linear isometry $A_k: \R^d \to \R^d$ such that $A_k(\Pi_k') = \Pi_k$, and $A_k$ satisfies the estimates
 \begin{equation} \label{eq:A5}
\|A_k - I\| \leqslant C_1 D_0( 1+ \delta) \delta^k, \quad
\|\wt \psi_k- A_k \wt \psi_k'A_k^{-1}\| \leqslant C_2 D_0( 1+ \delta)\delta^k,
\end{equation}
for any $k = -p_0 + 1,\dots,0$. Now we are in position to apply \cite[Lemma 10.4.2]{petkov2017geometry} saying that with some constant $E > 0,$ depending only of $D, \: \kappa,\: \delta$ and $b,$ for $k = -p_0 + 1,\dots, 0$ we have
\begin{equation} \label{eq:A6}
\|M_k - A_k M_{k}'A_k^{-1} \| \leqslant D_0 E a_1^{p_0 + k} + b^{2(k+ p_0 - 1)} \|M_{-p_0 + 1} - A_{-p_0 + 1}  M_{-p_0 + 1}'A_{-p_0 + 1}^{-1} \|.
\end{equation}
The norm of the second term on the right hand side is bounded by $2B_0 b^{2(k + p_0 -1)}$ and for $k = 0$ we obtain
\[\|M_0 - A_0 M_0'A_0^{-1} \| \leqslant D_0 E a_1^{p_0} + 2 B_0 b^{2(p_0 - 1)}.\]
Applying the above estimate for the rays $\gamma_{p_0 + q}$, the maps $M_0', A_0$ will depend of the ray $\gamma_{p_0 + q}$ and for this reason we denote them by $M_{q, 0}', A_{q, 0}.$
Now we use these estimates for the maps $M_{q, 0}', \: M_{q', 0}$ related to the rays $\gamma_{p_0 + q}$ and $\gamma_{p_0 + q'}$ and by the triangle inequality one deduces
\begin{equation} \label{eq:A7}
\big\|A_{q,0} M_{q,0}'A_{q,0}^{-1}  - A_{q', 0}M_{q', 0}' A_{q', 0}^{-1}\big\| \leqslant 2 D_0 E a_1^{p_0} + 4 B_0 b^{2(p_0 - 1)}.
\end{equation} 
Here $A_{q,0}(\Pi_{q,0}') = \Pi_0$
and $A_{q', 0}(\Pi_{q', 0}') = \Pi_0$ are some isometries satisfying the estimates (\ref{eq:A5}). Clearly, one obtain a Cauchy sequence
$(A_{q, 0} M_{q, 0}' A_{q, 0}^{-1}) _{q \geqslant 1} $ which converges to a symmetric non-negative linear map $\wt{M}_0$ in $\Pi_0.$ Moreover, if for every $q$ we have $M_{q, 0}' \geqslant \ep I$, then $\wt{M}_0 \geqslant \ep I.$

 After this preparation we define the unstable manifold at $\rho = \phi_{\tau} (z_0),\: z_0 = (q_0, v_0)$ for some $0 < \tau < \|q_1 - q_0\|$ as the subspace 
\[E_u(\rho) = \dd \phi_\tau(z_0)  \{(u, \wt{M}_0 u) \in \Pi_0 \times \Pi_0:\: u \in \Pi_0\} \subset \Sigma_{\rho}.\]
 It is important to note that the procedure leading to the estimate (\ref{eq:A6}) can be repeated starting with $\wt{M}_0$ instead of $M_{-p_0 + 1}.$  Then if $\wt{M}_{k}$ are the maps obtained from $\wt{M}_0$ after successive reflections, we obtain an estimate
\begin{equation*}
\|\wt{M} _k - A_k M_k'A_k^{-1} \| \leqslant D_0 E a_1^{p_0 + k} + b^{2(k+ p_0 - 1)} \|\wt{M}_0 - A_0\wt{M}_0'A_0^{-1} \|
\end{equation*}
for $k = 1,\dots, p_0/2.$

We can repeat the above argument for $v \in E_u(\rho)$ and $t = - \tau + \sum_{j= 1}^p \lambda_j + \sigma$, where $0 < \tau < \lambda_1$ and $0 < \sigma < \lambda_{p+1}$, to estimate  
\[\|\dd \phi_{t}(\rho) \cdot v\| .\]
We apply (\ref{eq:A1}) and (\ref{eq:A1bis}) with the expansion map $\wt{S}_p$ defined as the composition of the maps $( I + \lambda_{k} {\mathcal A}_{k- 1} '(\wt{M}_0))$ and we get an estimate for    $\|\dd \phi_{t}(\rho) \cdot v\| .$ Finally, the construction of the stable space $E_s(\phi_\sigma(z_0)),\: -\|q_{-1} - q_0\| < \sigma < 0$ can be obtained by a similar argument and we omit the details.
       
 \section{Ikawa's criterion and proof of Theorem 3}\label{appendix:b}
 \def\j{{\bf j}}   
In this appendix we prove Theorem 3 for all dimensions $d \geqslant 2.$  The result of Ikawa \cite[Theorem 2.1]{ikawa1990poles} was established for $d$ odd and it yields only an infinite number of resonances in a suitable band. To obtain a stronger result we apply the argument of \cite{zworski2017local}. The proof is based on Lemma 2.2, Proposition 2.3 and Theorem 2.4 in \cite{ikawa1990poles}.  Recall the notation
\[
\Lambda_{\omega} = \{ \mu_j \in \C \setminus e^{i\frac{\pi}{2} \overline{\R^{+}}}: \: 0 < \Im \mu_j \leqslant \omega |\Re \mu_j|, \: 0 < \arg \mu_j < \pi\}.
\] {For the modification covering all dimensions $d \geqslant 2$, it is necessary only to modify Lemma 2.2 since the other results are independent of  the dimension $d$.  Below we consider only the  resonances $\mu_j$ for which $0 <  {\rm arg}\: \mu_j < \pi$ and we omit this in the notation. 

Let $\rho \in C_c^{\infty}(\R; \R_+)$ be an even function with $\supp \rho \subset \left[-1, 1\right]$ such that
\[
\rho(t) > 1 \quad \text{if} \quad |t| \leqslant 1/2,
\]
and the property that its Fourier transform is non-negative,
\[
\hat{\rho}(k) = \int \e^{i t k} \rho(t) \dd t \geqslant 0, \quad k \in \R.
\]
(As in \cite{ikawa1990poles}, we use the above Fourier transform, since we deal with $\langle e^{i \mu_j t}, \rho\rangle$). It is easy to construct $\rho$ with the above properties. Let $\phi \in C_c^{\infty}(\R; [0, 1])$ be an even function with support in $[-1/2, 1/2]$ such that $\phi(x)\equiv 1$ for $|x| \leqslant 3/8.$ Define
\[\Phi(t): = (\phi \star \phi )(t) = \int_{-\infty}^{\infty} \phi(x) \phi(t- x) \dd x \geqslant 0.\]
Clearly, $\Phi(t)$ is even, has support in $[-1, 1]$ and $\hat{\Phi}(k) = (\hat{\phi}(k))^2.$  For $k \in \R$ the function $\hat{\phi}(k)$ is real valued and $\hat{\Phi}(k) \geqslant 0$ for $k \in \R.$ On the other hand, for $|t| \leqslant 1/2$ we have
\[\Phi(t) \geqslant \int_{-1/4}^{1/4} \phi(t- x) \dd x= \int_{t - 1/4}^{t + 1/4} \phi(s) \dd s \geqslant {\rm mes}\:([t - 1/4, t+ 1/4] \cap [-3/8, 3/8]) \geqslant \frac{1}{8}\]
 and we may take $\rho(t) = 9\Phi(t)$.

Let $(\ell_q)_{q \in \N}$ and $(m_q)_{q \in \N}$ be sequences of positive numbers such that $\ell_q \geqslant d_0 = \min_{k \neq j} {\rm dist}\: (D_k, D_j)> 0, \: m_q \geqslant \max\{1, \frac{1}{d_0}\}$ and let $\ell_q, m_q\to \infty$ as $q \to \infty$. Finally, set
\[
\rho_q(t) := \rho(m_q (t- \ell_q)), \quad t \in \R,
\]
and  $c_0 := \int\rho(t) \dd t \geqslant 1.$ The result \cite[Lemma 2.2]{ikawa1990poles} must be modified as follows.
\begin{lemm} Let $0 < \delta < 1$ be fixed. Assume that for $\alpha \geqslant 1$we have
\[N(\alpha) = \# \{\mu_j \in \Lambda_{\omega}: \: 0 < \Im \mu_j \leqslant \alpha,\: |\mu_j | \leqslant r \} \leqslant P(\alpha, \delta)r^{\delta}\]
with $P(\alpha, \delta) < \infty.$
Then we have
\begin{equation} \label{eq:B1}
 \sum_{ \mu_j \in \Lambda_{\omega}}|\hat{\rho_q}(\mu_j)| \leqslant C_0e^{\alpha}  m_q^{d + 1} \e^{-\alpha \ell_q} + C_1P(\alpha, \delta) m_q^{-(1- \delta)} 
\end{equation}
for some constants $C_0 > 0, C_ 1 > 0$ independent of $\alpha$ and $q$.
\end{lemm}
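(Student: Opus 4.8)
The plan is to estimate $\sum_{\mu_j \in \Lambda_\omega} |\hat\rho_q(\mu_j)|$ by splitting the sum according to the size of $|\mu_j|$, using the explicit Gaussian-type decay of $\hat\rho_q$ coming from the compact support of $\rho$, and then inserting the hypothesis $N(\alpha) \leqslant P(\alpha,\delta) r^\delta$ on the counting function. First I would record the elementary bounds on $\hat\rho_q$. Since $\rho_q(t) = \rho(m_q(t-\ell_q))$ with $\supp\rho \subset [-1,1]$, a change of variables gives $\hat\rho_q(\mu) = m_q^{-1}\e^{i\ell_q\mu}\hat\rho(\mu/m_q)$, and because $\rho$ is smooth and compactly supported in $[-1,1]$ we have the two complementary estimates: for $\Im\mu = \sigma$,
\[
|\hat\rho_q(\mu)| \leqslant m_q^{-1}\e^{-\ell_q\sigma}\,\e^{|\sigma|/m_q}\cdot C_N\bigl(1 + |\Re\mu|/m_q\bigr)^{-N}
\]
for every $N$, where the $\e^{|\sigma|/m_q}$ factor accounts for the width of the support. (Here one uses $\sigma > 0$ on $\Lambda_\omega$, and on $\Lambda_\omega$ also $\sigma \leqslant \omega|\Re\mu|$, so $\e^{|\sigma|/m_q}$ is harmless once $m_q$ is large.) The key point is that the exponential $\e^{-\ell_q\sigma}$ gives strong decay in the imaginary part of the resonance, while the polynomial factor $(1+|\Re\mu|/m_q)^{-N}$ gives decay in the real part on the scale $m_q$.

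Next I would split $\Lambda_\omega$ into the region $R_1 = \{\sigma \leqslant \alpha\}$ and $R_2 = \{\sigma > \alpha\}$. On $R_2$, I use $\e^{-\ell_q\sigma} \leqslant \e^{-\alpha\ell_q}$ (valid since $\ell_q \geqslant d_0 > 0$ and $\sigma > \alpha$), together with the global Weyl-type bound $\#\{\mu_j : |\mu_j| \leqslant r\} \leqslant Cr^d$ recalled in the introduction, and sum the polynomially decaying tail $\sum_{k} (k^{d-1})(1+k/m_q)^{-N}$ by comparison with an integral; choosing $N$ large enough (say $N = d+2$) this sum is $\lesssim m_q^{d}$, and combined with the prefactor $m_q^{-1}$ and the extra $\e^{\alpha/m_q} \leqslant \e^\alpha$ factor from the support width on $R_2 \cap \Lambda_\omega$ (using $\sigma \leqslant \omega|\Re\mu|$ only after the polynomial decay has been spent, or simply bounding $\e^{|\sigma|/m_q}$ crudely) one gets a contribution $\leqslant C_0\,\e^\alpha\, m_q^{d+1}\e^{-\alpha\ell_q}$. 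This is the first term of \eqref{eq:B1}. On $R_1$, where $0 < \sigma \leqslant \alpha$, I drop the exponential factor $\e^{-\ell_q\sigma} \leqslant 1$ entirely and estimate $\sum_{\mu_j \in R_1} |\hat\rho_q(\mu_j)| \leqslant m_q^{-1}C_N \sum_{\mu_j \in R_1}(1+|\Re\mu_j|/m_q)^{-N}$. Decomposing $R_1$ into dyadic or unit annuli $\{k \leqslant |\mu_j| < k+1\}$ and using the hypothesis $N(\alpha) \cap \{|\mu_j| \leqslant r\} \leqslant P(\alpha,\delta)r^\delta$ to count points in each annulus (so the number of resonances with $|\mu_j| \sim k$ in $R_1$ is $\lesssim P(\alpha,\delta)\,k^{\delta-1}$, or more safely $\lesssim P(\alpha,\delta)((k+1)^\delta - k^\delta)$), the sum becomes $\lesssim P(\alpha,\delta)\sum_k k^{\delta-1}(1+k/m_q)^{-N}$. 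Comparing with $\int_0^\infty x^{\delta-1}(1+x/m_q)^{-N}\,dx = m_q^\delta \int_0^\infty y^{\delta-1}(1+y)^{-N}\,dy$ (a convergent Beta integral for $N > \delta$), this is $\lesssim P(\alpha,\delta)\,m_q^\delta$, and after the prefactor $m_q^{-1}$ one obtains $\leqslant C_1 P(\alpha,\delta)\,m_q^{-(1-\delta)}$, the second term of \eqref{eq:B1}.

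I expect the main obstacle to be the bookkeeping of the $\e^{|\sigma|/m_q}$ support-width factor on $\Lambda_\omega$: because resonances in $\Lambda_\omega$ can have $\Im\mu_j$ comparable to $\omega|\Re\mu_j|$, one must be careful that this exponential does not overwhelm the polynomial decay $(1+|\Re\mu_j|/m_q)^{-N}$. The resolution is that for a \emph{fixed} band $\{\sigma \leqslant \alpha\}$ the factor is at most $\e^{\alpha/m_q} \leqslant \e$, so it is absorbed into the constant, while for the region $\sigma > \alpha$ one exploits that $\e^{-\ell_q\sigma}$ with $\ell_q \to \infty$ beats everything; the constraint $0 < \arg\mu_j < \pi$ and $\sigma \leqslant \omega|\Re\mu_j|$ ensures that even in $R_2$ the subexponential width factor $\e^{\sigma/m_q}$, once paired with, say, half of the polynomial decay, stays bounded uniformly. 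A secondary technical point is making the annulus-counting rigorous: one should phrase the hypothesis as a bound on $N(\alpha) \cap \{|\mu_j| \leqslant r\}$ for all $r$ and use it as a Stieltjes measure, integrating by parts against the smooth decaying weight $(1+x/m_q)^{-N}$; this is routine once the weight and its derivative are controlled. With these two points handled, assembling the two regional estimates yields \eqref{eq:B1} with constants $C_0, C_1$ depending only on $\rho$, $d$, $\omega$ and the implied Weyl constant, independent of $\alpha$ and $q$.
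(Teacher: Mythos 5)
Your decomposition at $\Im\mu_j=\alpha$ and the two regional estimates are essentially the paper's own proof. For $\Im\mu_j>\alpha$ the paper integrates by parts $d+2$ times in $t$ (the same smoothness-of-$\rho$ decay you invoke via Paley--Wiener), extracts $\e^{-\alpha(\ell_q-m_q^{-1})}\leqslant \e^{\alpha}\e^{-\alpha\ell_q}$, and sums $|\mu_j|^{-(d+2)}$ using the polynomial resonance-counting bound (Vodev's bound when $d$ is even; note also that in this region $|\mu_j|\geqslant\Im\mu_j>\alpha\geqslant 1$, so there is no small-$|\mu_j|$ issue). For $\Im\mu_j\leqslant\alpha$ it writes $\int \e^{i\zeta t}\rho_q(t)\,\dd t=m_q^{-1}\e^{i\zeta\ell_q}\hat\rho(\zeta/m_q)$, applies Paley--Wiener, and integrates by parts against $\dd N_\alpha(r)$ with $N_\alpha(r)\leqslant P(\alpha,\delta)r^\delta$ --- exactly your Stieltjes-measure argument --- yielding $C_1P(\alpha,\delta)m_q^{-(1-\delta)}$.

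The one point you should repair is the treatment of the support-width factor on the band $\{\sigma:=\Im\mu_j\leqslant\alpha\}$: you discard $\e^{-\ell_q\sigma}\leqslant 1$ and then claim $\e^{\sigma/m_q}\leqslant\e^{\alpha/m_q}\leqslant\e$, which requires $m_q\geqslant\alpha$; this is not among the hypotheses (only $m_q\geqslant\max\{1,1/d_0\}$ and $m_q\to\infty$), and bounding the factor by $\e^{\alpha}$ instead would put an $\alpha$-dependent constant into the second term, contradicting the statement that $C_1$ is independent of $\alpha$. The fix is already contained in your displayed inequality: do not drop $\e^{-\ell_q\sigma}$, but combine $\e^{-\ell_q\sigma}\e^{\sigma/m_q}=\e^{-\sigma(\ell_q-1/m_q)}\leqslant 1$ for all $\sigma>0$, which holds because $\ell_q\geqslant d_0\geqslant 1/m_q$. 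This is precisely how the paper's Paley--Wiener step, $\bigl|\e^{i\zeta\ell_q}\hat\rho(\zeta/m_q)\bigr|\leqslant C_N\,\e^{-\Im\zeta(\ell_q-m_q^{-1})}\bigl(1+|\zeta|/m_q\bigr)^{-N}\leqslant C_N\bigl(1+|\zeta|/m_q\bigr)^{-N}$, eliminates the width factor; with this correction your band estimate gives the stated $C_1P(\alpha,\delta)m_q^{-(1-\delta)}$ with $C_1$ depending only on $\rho$, $\delta$ and $\omega$, and the rest of your argument goes through as written.
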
 
\begin{proof} We write 
$$\sum_{\mu_j \in \Lambda_{\omega}} = \sum_{\substack{\mu_j \in \Lambda_{\omega} \\ \Im \mu_j > \alpha}} + \sum_{\substack{\mu_j \in \Lambda_{\omega} \\ \Im \mu_j \leqslant \alpha}} = (\textrm{I})+ (\textrm{II}).$$
For $(\textrm{I})$ one integrates by parts,
\begin{equation}\label{eq:Iipp}
\int_{\ell_q	-\frac{1}{m_q}}^{\ell_q + \frac{1}{m_q }} \rho(m_q(t- \ell_q)) \e^{it \mu_j} \dd t = \frac{(-1)^{d+2} m_q^{d+2} }{(i\mu_j)^{d + 2}}  \int_{\ell_q -\frac{1}{m_q}}^{\ell_q + \frac{1}{m_q }} \rho^{(d+ 2)} (m_q (t - \ell_q)) \e^{it \mu_j} \dd t.
\end{equation}
We have $|e^{it\mu_j} |\leqslant \e^{- t\Im\mu_j }$ and since $\supp \rho_q \subset [\ell_q - m_q^{-1}, \ell_q + m_q^{-1}]$ and $\Im \mu_j > \alpha$, we get
 \[|e^{it\mu_j} | \leqslant \e^{-\alpha(\ell_q - m_q^{-1})} \leqslant\e^{-\alpha (\ell_q - 1)}.\]
In particular, the right hand side of \eqref{eq:Iipp} is estimated by  
\[Ce^{\alpha }  \frac{e^{-\alpha \ell_q} m_q^{d+1}} {|\mu_j|^{d+2}} \|\rho\|_{C^{d+2}(\R)}\]
with a constant $C > 0$ independent of $j$ and $q$. On the other hand, for $d$ even by the results of Vodev \cite{vodev1994}, \cite{vodev1994even} we have the estimate
\[\# \{ \mu_j: 0 \leqslant \arg \mu_j \leqslant \pi, \:|\mu_j| \leqslant k \}\leqslant C_2 k^{d}.\]
and for $d$ odd we have the same bound (see {Section 4.3} in \cite{dyatlov2019mathematical}).
  Consequently, the series
\[\sum_{|\mu_j| \geqslant 1} \frac{1}{|\mu_j|^{d+2}} = \sum_{k= 1}^{\infty} \sum_{k \leqslant |\mu_j| < k+1}  \frac{1}{|\mu_j|^{d+2}} \leqslant C_2 \sum_{k = 1}^{\infty} \frac{(k+ 1)^d}{k^{d+2}}  \leqslant C_3\]
is convergent. This yields the first term on the right hand side of (\ref{eq:B1}). Passing to the estimate of $($II$),$ we apply the argument of the proof of Theorem 2 in \cite{zworski2017local}. First,
\[\int e^{i \zeta t} \rho_q(t) dt = m_q^{-1} e^{i \zeta \ell_q}\hat{\rho}(\zeta m_q^{-1}).\]
Applying the Paley-Winner theorem for $\Im \zeta \geqslant 0$ and $N \geqslant 2,$ one deduces
\[\big|\int e^{i \zeta t} \rho_q(t) dt\big| \leqslant C_N m_q^{-1} e^{- \Im \zeta (\ell_q - m_q^{-1})} (1 + |\zeta m_q^{-1}|)^{-N} \leqslant C_N m_q^{-1} ( 1 + |\zeta m_q^{-1})^{-N}.\]
Therefore
\[\big |\sum_{\Im \mu_j > \alpha} \langle e^{i \mu_j t}, \rho_q(t)\rangle\big | \leqslant C_N m_q^{-1} \int_0^{\infty} (1 + m_q^{-1}  r)^{-N} d N_{\alpha}(r) \]
\[\leqslant - C_N m_q^{-1} \int_{0}^{\infty} \frac{d}{dr} \Bigl( (1 + m_q^{-1}  r)^{-N}\Bigr) N_{\alpha}(r)dr \leqslant B_N P(\alpha, \delta) m_q^{-1 + \delta} \int_0^{\infty} ( 1 + y)^{-N- 1}y^{\delta} dy \]
\[= A_N P(\alpha, \delta) m_q^{-(1- \delta)}.\]

Notice that the other terms in the trace formula of Zworski \eqref{eq:tracezworski} are easily estimated. In fact, since $\lambda \mapsto \psi(\lambda)$ has compact support, one gets
\[
\begin{aligned}
\left|\int \Bigl(\int\psi(\lambda) \frac{\dd\sigma}{\dd\lambda}(\lambda) \cos(\lambda t) \dd\lambda\Bigr) \rho(m_q(t - \ell_q)) \dd t \right| 
&\leqslant C_{\psi}  \int\rho(m_q(t - \ell_q)) \dd t \\
&  \leqslant C_{\psi} c_0 m_q^{-1}. 
\end{aligned}
\]
Here we integrate by parts in the integral with respect to $\lambda$ and exploit the fact that $\sigma(\lambda)$ is bounded on the support of $\psi(\lambda)$ (see Section 3.10 in \cite{dyatlov2019mathematical} for the estimates of $\sigma(\lambda)$).
Similarly,
  \[\left| \int v_{\omega, \psi} (t) \rho(m_q(t - \ell_q)) \dd t \right| \leqslant C_{\omega, \psi}  \int \rho(m_q(t - \ell_q)) \dd t \leqslant c_0 C_{\omega, \psi}m_q^{-1}. \]
   We can put the estimates of these terms in $C_1 P(\alpha, \delta)m_q^{-(1- \delta)}$ increasing the constant $C_1$.
    This completes the proof.
 \end{proof}

Define the distribution $\hat F_\mathrm D \in  {\mathcal S}'(\R^+)$ by
\begin{equation} \label{eq:B2}
\hat{F}_\mathrm D(t) = \sum_{\gamma \in \mathcal P} \frac{(-1)^{m(\gamma)} \tau^{\sharp}(\gamma)  \delta(t - \tau(\gamma))}{|\det(I - P_{\gamma})|^{1/2}}.
\end{equation}. 
As we mentioned above, the following results are proved in \cite{ikawa1990poles}  and their proofs are independent of the dimension $d.$ For convenience of the reader we present the statements.

\begin{prop}[Prop. 2.3, \cite{ikawa1990poles}]\label{prop:2.3ika} Suppose that the function $s \mapsto \eta_\mathrm D(s)$ cannot be prolonged as an entire function of $s$. Then there exists $\alpha_0 > 0$ such that for any $\beta > \alpha_0$ we can find sequences  $(\ell_q), (m_q)$ with $\ell_q \to \infty$ as $q \to \infty$ and such that for all $q \geqslant 0$ one has
\[
e^{\beta \ell_q} \leqslant m_q \leqslant \e^{2 \beta \ell_q}
\quad \text{and} \quad
|\langle \hat{F}_\mathrm D, \rho_q \rangle | \geqslant \e^{- \alpha_0 \ell_q}.
\]
\end{prop}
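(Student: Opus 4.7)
The plan is to establish the proposition through an integral representation of the pairing $\langle \hat F_\mathrm D, \rho_q\rangle$ in terms of $\eta_\mathrm D$, followed by a contour-shift argument exploiting the meromorphic structure from Theorem~\ref{thm:main}. As the starting point, for $c$ larger than the abscissa of absolute convergence $\sigma_1$ of $\eta_\mathrm D$, I would substitute the Dirichlet-series definition of $\eta_\mathrm D$ into the Mellin inversion formula and swap sum and integral (legitimate by absolute convergence) to obtain
\[
\langle \hat F_\mathrm D,\rho_q\rangle \;=\; \frac{1}{2\pi i}\int_{\Re s = c}\eta_\mathrm D(s)\,e^{s\ell_q}\,m_q^{-1}\,\hat\rho(-is/m_q)\,ds,
\]
using $\int e^{st}\rho_q(t)\,dt=e^{s\ell_q}m_q^{-1}\hat\rho(-is/m_q)$. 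Paley--Wiener applied to $\rho\in C_c^\infty$ furnishes the key decay $|\hat\rho(-is/m_q)|\leq C_N(1+|\Im s|/m_q)^{-N}e^{|\Re s|/m_q}$, turning the amplitude into a sharp Gaussian-like localizer in $s$.

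Since $\eta_\mathrm D$ is meromorphic on $\C$ by Theorem~\ref{thm:main} and non-entire by hypothesis, it possesses at least one pole; set $\sigma^\ast=\sup\{\Re s_0:s_0\text{ a pole of }\eta_\mathrm D\}$, which is finite because of holomorphy in $\Re s>\sigma_1$. I would then deform the contour from $\Re s=c$ down past the rightmost poles to $\Re s=\sigma^\ast-\varepsilon$, picking up residue contributions
\[
\sum_{s_j:\,\Re s_j>\sigma^\ast-\varepsilon} r_{s_j}\, e^{s_j\ell_q}\, m_q^{-1}\,\hat\rho(-is_j/m_q),
\]
while arguing that the shifted contour integral is exponentially small in $\ell_q$. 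Precisely here the sandwich $e^{\beta\ell_q}\leq m_q\leq e^{2\beta\ell_q}$ enters: on the vertical line $\Re s=\sigma^\ast-\varepsilon$, the Paley--Wiener factor $\hat\rho(-is/m_q)$ kills both the imaginary-direction growth and, together with an a priori polynomial growth bound on $\eta_\mathrm D$ along vertical lines (deducible from its Dirichlet-series expansion and Phragm\'en--Lindel\"of), the remainder is $O(e^{(\sigma^\ast-\varepsilon)\ell_q})$ which is absorbed in the claimed lower bound.

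The main obstacle is to choose $\ell_q\to\infty$ so that the residue sum does not cancel below $e^{-\alpha_0\ell_q}$. The oscillatory factors $e^{i(\Im s_j)\ell_q}$ can, a priori, produce large cancellations, and one must find infinitely many $\ell_q$ along which this fails. I would resolve this by a standard almost-periodic / $L^2$-mean argument: the mean of $\Bigl|\sum_j r_{s_j}e^{s_j\ell}\hat\rho(-is_j/m_q)\Bigr|^2$ over $\ell\in[T,2T]$ is bounded below by $\sum_j|r_{s_j}|^2|\hat\rho(-is_j/m_q)|^2$, which forces a set of $\ell\in[T,2T]$ of positive proportion on which the sum exceeds a fixed multiple of the quadratic mean. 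Combined with $m_q^{-1}\geq e^{-2\beta\ell_q}$ and $|\hat\rho(-is_j/m_q)|\gtrsim 1$ for $m_q$ large, this yields $|\langle\hat F_\mathrm D,\rho_q\rangle|\geq e^{-\alpha_0\ell_q}$ for appropriate $\alpha_0$ depending only on $\sigma^\ast$ and the dominant residues.

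The delicate part will be arranging that a single $\alpha_0$ works uniformly for all $\beta>\alpha_0$: larger $\beta$ means narrower bumps $\rho_q$, which resolves higher-frequency pole contributions but degrades the $m_q^{-1}$ prefactor. The balance is struck precisely by the double-exponential sandwich $e^{\beta\ell_q}\leq m_q\leq e^{2\beta\ell_q}$, tuning $m_q$ so that the Paley--Wiener decay exactly compensates the vertical growth of $\eta_\mathrm D$. An alternative, perhaps cleaner, route is to argue by contrapositive: if $|\langle \hat F_\mathrm D,\rho_q\rangle|<e^{-\alpha_0\ell_q}$ for every admissible $(\ell_q,m_q)$, one uses the above integral representation together with a smooth partition of unity on $\R^+$ to assemble a bound showing that $\eta_\mathrm D$ must extend holomorphically to $\Re s>-\alpha_0$, and letting $\alpha_0\to\infty$ forces $\eta_\mathrm D$ to be entire, contradicting the hypothesis.
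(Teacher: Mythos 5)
Note first that the paper does not reprove this statement: it is quoted verbatim from Ikawa (\cite[Prop.~2.3]{ikawa1990poles}), whose proof works directly with the localized orbit sums and the existence of a singularity of $\eta_\mathrm D$; so your attempt has to be judged on its own terms, and as it stands it has genuine gaps. The central one is the contour shift. To move the line of integration from $\Re s=c$ past the rightmost singularity you need (i) growth control of $\eta_\mathrm D$ on vertical lines in the strip $\sigma^\ast-\varepsilon\leqslant\Re s\leqslant c$, and (ii) some control on the set of poles there. Neither is available: Theorem~\ref{thm:main} gives meromorphy with no bounds whatsoever, the Dirichlet series diverges to the left of the abscissa of convergence, and Phragm\'en--Lindel\"of cannot be invoked because you have boundary control only on the right-hand line. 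Moreover nothing excludes infinitely many poles $s_j$ with $\Re s_j>\sigma^\ast-\varepsilon$ and $|\Im s_j|\to\infty$, so the ``residue sum'' need not be finite, and the finite-frequency $L^2$-mean argument you invoke to prevent cancellation does not apply to it.

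Even granting the shift, the quantitative structure does not give the proposition. Each residue contributes $r_{s_j}e^{s_j\ell_q}m_q^{-1}\hat\rho(-is_j/m_q)$, and the Paley--Wiener factor is of size $\approx 1$ for all poles up to height $|\Im s_j|\lesssim m_q\sim e^{\beta\ell_q}$; so either you retain only boundedly many poles and your main term is $O\bigl(e^{(\sigma^\ast-\beta)\ell_q}\bigr)$, which for large $\beta$ lies far below $e^{-\alpha_0\ell_q}$ for any \emph{fixed} $\alpha_0$ --- whereas the statement requires one $\alpha_0$ valid for every $\beta>\alpha_0$ --- or you must control on the order of $e^{\beta\ell_q}$ pole contributions, which your argument does not do. The sandwich $e^{\beta\ell_q}\leqslant m_q\leqslant e^{2\beta\ell_q}$ is not the cure here; the lower bound $m_q\geqslant e^{\beta\ell_q}$ is precisely what suppresses each residue term. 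The alternative contrapositive sketch has the same defect: a partition of unity at scale $m_q^{-1}$ uses about $e^{\beta\ell}$ bumps per unit time, so blockwise smallness $e^{-\alpha_0\ell}$ only yields holomorphy of $\eta_\mathrm D$ in $\Re s>\beta-\alpha_0$, not in $\Re s>-\alpha_0$; and ``letting $\alpha_0\to\infty$'' is not available, since $\alpha_0$ is fixed in the statement and the negation of the conclusion gives smallness for a single $\beta$ only. To close the argument one needs the mechanism of Ikawa's proof (exploiting the freedom of $m_q$ inside the window, the positivity properties of $\rho$, and the exponential bounds on $|\det(\mathrm{Id}-P_\gamma)|$ and on the number of orbits), not a residue computation for $\eta_\mathrm D$.
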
 
\begin{theo}[Theorem 2.4, \cite{ikawa1990poles}]\label{thm:5} There exist constants  $C > 0, \alpha_1 > 0$ such that for any sequences $(\ell_q)$ and $(m_q)$ with $\ell_q \to \infty$ as $q \to \infty$, it holds
\begin{equation} \label{eq:B4}
|\langle u, \rho_q\rangle | \geqslant |\langle \hat F_\mathrm D, \rho_q \rangle| - C \e^{\alpha_1 \ell_q} m_q^{-1}. 
\end{equation}
\end{theo}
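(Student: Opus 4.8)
\emph{Proof proposal.} The plan is to deduce \eqref{eq:B4} from the reverse triangle inequality
\[
|\langle u, \rho_q\rangle| \geqslant |\langle \hat F_\mathrm D, \rho_q\rangle| - |\langle u - \hat F_\mathrm D, \rho_q\rangle|,
\]
so that everything reduces to the bound $|\langle u - \hat F_\mathrm D, \rho_q\rangle| \leqslant C\e^{\alpha_1\ell_q}m_q^{-1}$. First I would record the trace-formula input. By \eqref{eq:sing} (Guillemin--Melrose), near each $T = \tau(\gamma)$ the singular part of $u$ on $(0,\infty)$ is precisely the $\delta$-term appearing in $\hat F_\mathrm D$ (see \eqref{eq:B2}), so that $w := u - \hat F_\mathrm D$ is a locally integrable function on $(0,\infty)$, smooth away from the discrete set $\{\tau(\gamma):\gamma\in\mathcal P\}$, and having at worst a jump or a logarithmic singularity at each $\tau(\gamma)$. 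For $d$ even one starts instead from Zworski's formula \eqref{eq:tracezworski}; the extra terms $m(0)$, $2\int_0^\infty\psi(\lambda)\tfrac{\dd\sigma_\mathrm D}{\dd\lambda}(\lambda)\cos(t\lambda)\,\dd\lambda$ and $v_{\omega,\psi}$ are smooth on bounded intervals and, exactly as in the previous lemma, pair with $\rho_q$ to give $O(m_q^{-1})$, hence can be folded into $w$ without loss.

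The quantitative heart of the argument is an exponential bound for $w$: there are $C_0,\alpha_1>0$ such that, on any bounded interval $[\ell_0,t]$, the function $w$ is bounded by $C_0\e^{\alpha_1 t}$ away from the points $\tau(\gamma)$, and the coefficients of the jumps and logarithms at those points are also $\leqslant C_0\e^{\alpha_1 t}$. This follows by summing the Guillemin--Melrose expansion over the periodic rays of length at most $t$: their number is $\leqslant \e^{a_1 t}$, and under the non-eclipse condition \eqref{eq:1.1} each such ray is an ordinary reflecting ray with uniformly non-degenerate linearized Poincar\'e map, so that $\tau^\sharp(\gamma)$, $|\det(\mathrm{Id}-P_\gamma)|^{-1/2}$ and all the subleading amplitudes in the expansion grow at most exponentially in $\tau(\gamma)$ (in particular $|\det(\mathrm{Id}-P_\gamma)|\geqslant C_1\e^{b_1\tau(\gamma)}$). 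This is exactly the dimension-independent content of Ikawa's Lemma 2.2 and Theorem 2.4, and is where the hypotheses on $D$ are used.

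Granting this, the estimate is immediate. Since $m_q\geqslant\max\{1,1/d_0\}$ we have $\supp\rho_q\subset[\ell_q-m_q^{-1},\ell_q+m_q^{-1}]\subset[\ell_q-1,\ell_q+1]$; on this interval $w$ is bounded by $C_0\e^{\alpha_1(\ell_q+1)}$ except at the at most $\e^{a_1(\ell_q+1)}$ points $\tau(\gamma)$ lying in it, each of which contributes at most $C_0\e^{\alpha_1\ell_q}\cdot C m_q^{-1}\log m_q$ to $\int|\rho_q|$. Using $\int|\rho_q(t)|\,\dd t=c_0m_q^{-1}$ together with $\log m_q\leqslant 2\beta\ell_q$, one gets
\[
|\langle w,\rho_q\rangle|\leqslant \int |w(t)|\,|\rho_q(t)|\,\dd t \leqslant C\,\e^{\alpha_1'\ell_q}\,m_q^{-1}
\]
for a slightly enlarged exponent $\alpha_1'$, which yields \eqref{eq:B4}. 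The main obstacle is the second step: making rigorous, \emph{uniformly over all periodic rays}, the structure of the subleading terms in the wave-trace singularity and their exponential growth in $\tau(\gamma)$ — this is precisely why one invokes Ikawa's results rather than reproving them, and why the case $d$ even additionally requires \eqref{eq:tracezworski} and the Vodev resonance-counting bounds.
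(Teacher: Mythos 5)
There is a genuine gap at the step you yourself flag as ``the quantitative heart.'' The Guillemin--Melrose result \eqref{eq:sing} is a qualitative, local-in-time statement: at each period $T=\tau(\gamma)$ the trace $u$ equals the $\delta$-term of $\hat F_\mathrm D$ plus \emph{some} $L^1_{\mathrm{loc}}$ remainder, with no control whatsoever on the size of that remainder as a function of $\tau(\gamma)$, and no assertion that it has only jump or logarithmic singularities. Your argument needs much more: a bound on the local $L^1$ (or pointwise) size of the remainder $r_\gamma$ on an interval of length $2/m_q$, uniform over the exponentially many rays with $\tau(\gamma)\in[\ell_q-1,\ell_q+1]$, and of the form $C\e^{\alpha_1\ell_q}$ after summation. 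This does not follow from \eqref{eq:sing} together with the counting bound $\sharp\{\gamma:\tau(\gamma)\leqslant x\}\leqslant\e^{a_1x}$ and $|\det(\mathrm{Id}-P_\gamma)|\geqslant C_1\e^{b_1\tau(\gamma)}$, because those control only the \emph{leading} coefficients, not the subleading terms; the paper states this obstruction explicitly (``$r_\gamma(t)$ could increase as $t\to\infty$, we need a precise analysis of the behavior of $r_\gamma(t)$''). Your way out is to say that the needed uniform bound ``is exactly the dimension-independent content of Ikawa's Lemma 2.2 and Theorem 2.4'' — but Theorem 2.4 of \cite{ikawa1990poles} \emph{is} the statement being proven (and it is stated there without proof), so this is circular; Lemma 2.2 concerns the resonance sum $\sum_j|\hat\rho_q(\mu_j)|$ and is irrelevant to bounding $u-\hat F_\mathrm D$.

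The paper's proof goes through a different, genuinely quantitative mechanism: Ikawa's asymptotic solutions $w^{(N)}_{q,\pm}$, built as sums over reflection configurations $\mathbf j$ with phases $\varphi^\pm_{\mathbf j}$ and amplitudes $v^\pm_{\mathbf j,h}$, the estimate $\|u-w^{(N)}\|\lesssim k^{-N}$ for the exact solution minus the parametrix, stationary phase in $(x,\eta)$ with $t$ as a parameter, the crucial amplitude bounds $C_p\e^{-\alpha_2\ell_q}(t+1)^h$ coming from the hyperbolic contraction of the billiard, and the configuration count $\sharp\{\mathbf j:|\mathbf j|\leqslant 2\ell_q/d_0\}\leqslant\e^{\alpha_3\ell_q}$; it is this combination, not \eqref{eq:sing}, that produces $|\langle u-\hat F_\mathrm D,\rho_q\rangle|\leqslant C\e^{\alpha_1\ell_q}m_q^{-1}$. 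A secondary point: for $d$ even the relevant complication here is the failure of the strong Huygens principle, i.e.\ $E_0(t,x,x)\neq 0$ for $t>0$, which the paper estimates directly inside the parametrix argument; Zworski's formula \eqref{eq:tracezworski} is used for Lemma B.1 (the resonance-sum bound), and invoking it as the starting point for the present estimate does not connect $u$ to $\hat F_\mathrm D$.
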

\begin{rem} In \cite[Theorem 2.4]{ikawa1990poles}, on the right hand side of (\ref{eq:B4}), one has the term $m_q^{-\epsilon}$ for some $\epsilon >0$ instead of $m_q^{-1}$. In particular the above estimate holds, increasing $\beta >\alpha_0.$
\end{rem} 
The above theorem is given in \cite{ikawa1990poles} without proof. However its proof repeats that of Proposition 2.2 in \cite{ikawa1988analysis} following the procedure described in \cite[\S3]{ikawa1985trap} and exploiting the construction of asymptotic solutions in \cite{ikawa1988fourier}. The first term on the right hand side of (\ref{eq:B4}) is obtained by the leading term in (\ref{eq:sing}) applying the stationary phase argument to a trace of a global parametrix (see Chapter 4 in \cite{petkov2017geometry}) or to the trace of the asymptotic solutions given below.
For the second one we must estimate
a sum 
\[\sum_{\substack{\gamma \in {\mathcal P} \\ \tau(\gamma) \leqslant \ell_q + m_q^{-1}}}  \int_{\ell_q- m_q^{-1}}^{\ell_q + m_q^{-1} } \rho_q(t) r_{\gamma}(t) \dd t,\]
where $r_{\gamma}$ is a function in $L^1_{\mathrm{loc}}(\R)$, which is obtained from the lower order terms in the application of the stationary phase argument. Since $r_{\gamma}(t)$ could increase as $t \to \infty$, we need a precise analysis of the behavior of  $r_{\gamma}(t)$.

We discuss briefly the approach of Ikawa and refer to \cite{ikawa1985trap}, \cite{ikawa1988fourier} for more details. First one expresses the distribution $u(t)$ defined in Introduction by the kernels $E(t, x, y),\: E_0(t, x, y)$ of the operators $\cos (t\sqrt{- \Delta}) \oplus 0$ and $\cos(t\sqrt{-\Delta_0})$, respectively (recall that  $-\Delta$ is the Laplacian in $Q = \R^d \setminus D$ with Dirichlet boundary conditions on $\partial D$).  Consider
\[
\hat E(t,x,y) =
\left\{
\begin{array}{ccc}
E(t,x,y) & \text{if} & (x,y) \in Q \times Q, \\
0 & \text{if} & (x,y) \notin Q \times Q.
\end{array}
\right.
\]
If $D \subset \{x: \: |x| \leqslant a_0\},$ then
 \[\supp_{x, y} \Bigl(\hat{E}(t, x, y) - E_0(t, x, y) \Bigr) \subset \bigl\{(x, y) \in \R^d \times \R^d:\: |x| \leqslant a_0 + t, \: |y| \leqslant a_0 + t\bigr\}.\]
 For $t \in \supp \rho_q$ we must study the trace
 \[\int_{\Omega_q} \langle \hat{E}(t, x, x) - E_0(t, x, x), \rho_q \rangle \dd x\] 
 with $Q_q = \{x \in Q: |x|\leqslant a_0 + \ell_q + 1\}.$ For odd dimensions the kernel $E_0(t, x, x)$ vanishes for $t > 0$. For even dimensions, $x \mapsto E_0(t, x, x)$ is smooth for any $t > 0$ and we can easily estimate 
 \[\left|\int_{\Omega_q} \langle E_0(t, x, x), \rho_q \rangle \dd x\right| \leqslant A_0 m_q^{-1}\]
 with $A_0 > 0$ independent of $q$ by using the representation of the kernel $E_0(t, x, y)$ by oscillatory integrals with phases $\langle x- y, \eta \rangle \pm t$ (see for example, \cite[\S 3.1]{petkov2017geometry}).
 
 Now, choose  $g \in C_c^{\infty} (Q_q)$ and write the kernel $E(t, x, y) g(y)$ of $\cos(t \sqrt{-\Delta})g(y)$ as 
 \[E(t, x, y)g(y) = (2 \pi)^{-d} \int_{{\mathbb S}^{d-1}} d\eta \int_0^{\infty} k^{d-1} u(t, x; k ,\eta)e^{- i k \langle y, \eta \rangle} g(y) dk,\]
 where $u(t, x; k, \eta)$ is the solution of the problem
 \[
 \begin{cases}
 (\partial_t^2 - \Delta_x) u =  0 \quad  \text{in} \quad \R_t \times Q,\\
 u = 0 \quad \text{on} \quad \R_t \times \partial Q,\\
 u(0, x) = \wt g(x) \e^{i k \langle x, \eta \rangle}, \quad \partial_tu(0, x) = 0,
 \end{cases} 
 \] 
 with a function   $\wt g \in C_0^{\infty} (Q)$ equal to 1 on $\supp g.$ In the works  \cite{ikawa1985trap, ikawa1988fourier, ikawa1988analysis, burq1993obstacles}) of Ikawa and Burq,  asymptotic solutions $w^{(N)} = w_{q, +}^{(N)} + w_{q, -}^{(N)}$ of the above problem have been constructed. They have the form  
 \[
 w_{q, \pm}^{(N)}(t, x; k, \eta) =\sum_{|{\bf j}|d_0 \leqslant a_0 + \ell_q + 1} \e^{i k (\varphi_{\j}^{\pm} (x, \eta) \mp t)} \sum_{h = 0}^N v_{{\j}, h}^{\pm} (t, x, \eta) (i k)^{-h}.
 \]
Here $\j = \{j_1, j_2,\dots,j_n\},\: j_k \in (1, \dots ,r),\: j_k \neq j_{k + 1},\: k = 1,2,\dots,n-1,\: |\j| = n$ is a configuration related to the rays reflecting successively on $\partial D_{j_1}, \partial D_{j_2},..., \partial D_{j_n}$ (see \S\ref{subsec:orientation}). The phases $\varphi_{\j}^{\pm}$ are constructed successively starting from $\langle x, \eta \rangle$ and following the reflections on obstacles determined by the configuration $\j.$ The amplitudes $v_{{\j}, h}^{\pm} $ are determined by transport equations. The reader may consult \cite[\S3]{ikawa1985trap}, \cite[Equations (3.2) and (3.3)]{ikawa1988analysis}, \cite[\S4]{ikawa1988fourier} and \cite{burq1993obstacles}  for the construction of $v_{\j, h}^{\pm}$. The function $u - w^{(N)}$ is solution of the problem
\[
\begin{cases}
(\partial_t^2 - \Delta_x) (u - w^{(N)})=  k^{-N} F_N(t, x; k, \eta) \quad \text{in} \quad \R_t \times Q,\\
u - w^{(N)}= k^{-N} b_N(t, x; k, \eta)\quad \text{on} \quad \R_t \times \partial Q,\\
(u- w^{(N)})(0, x; k, \eta) = \partial_t(u - w^{(N)})(0, x; k, \eta)  = 0.\end{cases}
\]
Here $F_N$ is obtained as the action of $(\partial_t^2 - \Delta_x)$ to the amplitudes $v_{{\j}, N}^{\pm},$ while $b_N$ is obtained by the traces on $\partial Q$ of the amplitudes $v_{{\j}, N}^{\pm}$. 
It is important to note that the asymptotic solutions $w^{(N)}$ are independent of the sequence $(m_q)$. The integral 
involving $u - w^{(N)}$ is easily estimated and it yields a term ${\mathcal O}(m_q^{-1})$ (see \cite{ikawa1985trap}). For the integral involving $w_{q, \pm}^{(N)}$ one applies the stationary phase argument as $k \to \infty$ for the integration with respect to $x \in Q_q,\: \eta \in {\mathbb S}^{d-1}$, considering $t$ as a parameter. Next, in \cite{ikawa1988fourier}, estimates of the derivatives of order $p$ of $v_{\j, h}^{\pm}(x, t, \eta)$ with respect to $x \in Q_q, \: \eta \in {\mathbb S}^{d-1}$ with bound $C_pe^{-\alpha_2 \ell_q} (t+ 1)^h,\: \alpha_2 > 0$ have been established. Here $C_p > 0$ and $\alpha_2 > 0$ are independent of $\ell_q$.  By using a partition on unity $\sum_j \psi_j(x) = 1$ on  $Q_q,$ for large fixed $N$ one deduces  the estimate
\[\big|\langle u -  \hat F_\mathrm D, \rho_q \rangle \big| \leqslant A \e^{-\alpha_2 \ell_q} \# \big\{\j: |\j| \leqslant \frac{2 \ell_q}{d_0} \big \} \ell_q^{2N + 2} m_q^{-1}\]
with constant $A > 0$ independent of $q$. Finally, since 
  \[\# \big\{\j: |\j| \leqslant \frac{2 \ell_q}{d_0} \big \} \leqslant \e^{\alpha_3 \ell_q},\: \: \forall q \geqslant 1\]
  with  constant $\alpha_3 > 0$ independent of $q$,
  we obtain (\ref{eq:B4}).
     
     Combining Proposition \ref{prop:2.3ika} and the estimates (\ref{eq:B1}) and (\ref{eq:B4}), it is easy to obtain a contradiction with the assumption that $P(\alpha, \delta) < \infty$ for all $\alpha \geqslant 1.$ Indeed, let
     \[\alpha = \frac{(2d+3)}{1 - \delta} ( \alpha_0 + \alpha_1 + 1),\quad \beta = \frac{\alpha}{2d+3}.\]
Then 
 \[ m_q^{d+1} \e^{- \alpha \ell_q} \leqslant \e^{(d+1)2\beta \ell_q}  \e^{-\alpha \ell_q}= \e^{-\beta \ell_q} \leqslant e^{- \beta( 1 -\delta)\ell_q}\]
 and 
 \[\alpha_1 + \alpha_0 -\beta  = \alpha_0 + \alpha_1  -\frac{\alpha_0 + \alpha_1 +1}{1- \delta} = -1 - \frac{\delta(\alpha_0 + \alpha_1 + 1)}{1- \delta}.\]
  From (\ref{eq:B1}), (\ref{eq:B4}) and Proposition B.2 one deduces
\[\Bigl(C_0 \e^{\alpha}  + C_1 P(\alpha, \delta)\Bigr) \e^{-\beta (1-\delta)\ell_q} \geqslant C_0 e^{\alpha} m_q^{d+1}e^{-\alpha \ell_q} + C_1 P(\alpha, \delta)e^{-\beta(1 - \delta)\ell_q}\]
\[\geqslant |\langle u, \rho_q\rangle| \geq\e^{-\alpha_0 \ell_q} - C \e^{\alpha_1 \ell_q} \e^{-\beta \ell_q} \]
\[= \e^{-\alpha_0 \ell_q}\Bigl( 1 - C e^{-\ell_q - \frac{\delta(\alpha_0 + \alpha_1 + 1)}{1- \delta}\ell_q}\Bigr).\]   
 Since $\beta( 1- \delta)  > \alpha_0,$ letting $q \to \infty$ we obtain a contradiction. This completes the proof of Theorem 3.
   
\bibliographystyle{alpha}
\bibliography{bib.bib}

\end{document}